\newtheorem{Theorem}{Theorem}[section]
\newtheorem{Lemma}[Theorem]{Lemma}
\newtheorem{Proposition}[Theorem]{Proposition}
\renewcommand{\phi}{\varphi}
\newcommand{\C}{\operatorname{C}}
\newcommand{\N}{\operatorname{N}}
\newcommand{\Z}{\operatorname{Z}}
\newcommand{\cohom}{\operatorname{H}}
\newcommand{\Aut}{\operatorname{Aut}}
\newcommand{\Inn}{\operatorname{Inn}}
\newcommand{\Out}{\operatorname{Out}}
\newcommand{\pcore}{\operatorname{O}}
\newcommand{\GL}{\operatorname{GL}}
\newcommand{\Irr}{\operatorname{Irr}}
\newcommand{\IBr}{\operatorname{IBr}}
\newcommand{\E}{\operatorname{E}}
\newcommand{\F}{\operatorname{F}}
\newcommand{\Bl}{\operatorname{Bl}}
\newcommand{\inertial}{\operatorname{T}}
\newcommand{\Gal}{\operatorname{Gal}}
\newcommand{\Ind}{\operatorname{Ind}}
\newcommand{\Ob}{\operatorname{Ob}}
\newcommand{\Hom}{\operatorname{Hom}}
\mathchardef\ordinarycolon\mathcode`\:  
\title{$2$-Blocks with minimal nonabelian defect groups}
\author{
Benjamin Sambale\\
Mathematisches Institut\\
Friedrich-Schiller-Universität\\
07743 Jena\\
Germany\\
{\tt benjamin.sambale@uni-jena.de}
}
\date{\today}
\begin{document}
\frenchspacing
\maketitle
\begin{abstract}\noindent
We study numerical invariants of $2$-blocks with minimal nonabelian defect groups. These groups were classified by Rédei (see \cite{Redei}). If the defect group is also metacyclic, then the block invariants are known (see \cite{Sambale}). In the remaining cases there are only two (infinite) families of “interesting” defect groups. In all other cases the blocks are nilpotent. We prove Brauer's $k(B)$-conjecture and the Olsson-conjecture for all $2$-blocks with minimal nonabelian defect groups. For one of the two families we also show that Alperin's weight conjecture and Dade's conjecture is satisfied. This paper is a part of the author's PhD thesis.
\end{abstract}
\textbf{Keywords:} blocks of finite groups, minimal nonabelian defect groups, Alperin's conjecture, Dade's conjecture.
\tableofcontents
\section{Introduction}
Let $R$ be a discrete complete valuation ring with quotient field $K$ of characteristic $0$. Moreover, let $(\pi)$ be the maximal ideal of $R$ and $F:=R/(\pi)$. We assume that $F$ is algebraically closed of characteristic $2$.
We fix a finite group $G$, and assume that $K$ contains all $|G|$-th roots of unity. Let $B$ be a block of $RG$ with defect group $D$. We denote the number of irreducible ordinary characters of $B$ by $k(B)$. These characters split in $k_i(B)$ characters of height $i\in\mathbb{N}_0$. Similarly, let $k^i(B)$ be the number of characters of defect $i\in\mathbb{N}_0$. Finally, let $l(B)$ be the number of irreducible Brauer characters of $B$.
The defect group $D$ is called \emph{minimal nonabelian} if every proper subgroup of $D$ is abelian, but not $D$ itself. Rédei has shown that $D$ is isomorphic to one of the following groups (see \cite{Redei}):
\begin{enumerate}[(i)]
\item $\langle x,y\mid x^{2^r}=y^{2^s}=1,\ xyx^{-1}=y^{1+2^{s-1}}\rangle$, where $r\ge 1$ and $s\ge 2$,
\item $\langle x,y\mid x^{2^r}=y^{2^s}=[x,y]^2=[x,x,y]=[y,x,y]=1\rangle$, where $r\ge s\ge 1$, $[x,y]:=xyx^{-1}y^{-1}$ and $[x,x,y]:=[x,[x,y]]$,\label{redei2}
\item $Q_8$.
\end{enumerate}
In the first and last case $D$ is also metacyclic. In this case $B$ is well understood (see \cite{Sambale}). Thus, we may assume that $D$ has the form \eqref{redei2}. 

\section{Fusion systems}
To analyse the possible fusion systems on $D$ we start with a group theoretical lemma.

\begin{Lemma}\label{lemmamna}
Let $z:=[x,y]$. Then the following hold:
\begin{enumerate}[(i)]
\item $|D|=2^{r+s+1}$.\label{lemmamna1}
\item $\Phi(D)=\Z(D)=\langle x^2,y^2,z\rangle\cong C_{2^{r-1}}\times C_{2^{s-1}}\times C_2$.\label{lemmamna2}
\item $D'=\langle z\rangle\cong C_2$.
\item $|\Irr(D)|=5\cdot 2^{r+s-2}$.
\item\label{lemmamna4} If $r=s=1$, then $D\cong D_8$. For $r\ge 2$ the maximal subgroups of $D$ are given by 
\begin{align*}
\langle x^2,y,z\rangle&\cong C_{2^{r-1}}\times C_{2^s}\times C_2,\\
\langle x,y^2,z\rangle&\cong C_{2^r}\times C_{2^{s-1}}\times C_2,\\
\langle xy,x^2,z\rangle&\cong C_{2^r}\times C_{2^{s-1}}\times C_2.
\end{align*}
\end{enumerate}
\end{Lemma}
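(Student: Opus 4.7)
Part (i) is the foundation. The relations $[x,x,y]=[y,x,y]=1$ say that $z=[x,y]$ commutes with $x$ and $y$, so $z\in\Z(D)$; together with $z^2=1$, this shows $D/\langle z\rangle$ is abelian, generated by the images of $x,y$ of orders dividing $2^r$ and $2^s$. Hence $|D|\le 2^{r+s+1}$. For the matching lower bound I would realize $D$ explicitly as the central extension of $C_{2^r}\times C_{2^s}$ by $\langle z\rangle\cong C_2$ given by the $2$-cocycle $\omega((a,b),(c,d))=z^{bc}$; the underlying set has cardinality $2^{r+s+1}$, and a direct check verifies the R\'edei relations with $z\ne 1$.

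Parts (iii) and (ii) are now structural. For (iii), $D/\langle z\rangle$ abelian gives $D'\subseteq\langle z\rangle$, and $D$ being nonabelian forces $D'=\langle z\rangle\cong C_2$. For (ii), the commutator identity $[x^2,y]=[x,y]^x[x,y]=z^2=1$ (and analogously for $y^2$) shows $x^2,y^2,z\in\Z(D)$, and they lie in $\Phi(D)$ as squares and a commutator. Since $D/\langle x^2,y^2,z\rangle\cong C_2\times C_2$ is elementary abelian, $\Phi(D)\subseteq\langle x^2,y^2,z\rangle$; and because $D$ is nonabelian, $|D:\Z(D)|\ge 4$, forcing $\Z(D)\subseteq\langle x^2,y^2,z\rangle$. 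Thus $\Phi(D)=\Z(D)=\langle x^2,y^2,z\rangle$, and the three central generators of orders $2^{r-1},2^{s-1},2$ multiply to $|\Z(D)|=2^{r+s-1}$, yielding the direct decomposition $C_{2^{r-1}}\times C_{2^{s-1}}\times C_2$.

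For (iv): $|D/D'|=2^{r+s}$ counts the linear characters, and since $|D:\Z(D)|=4$ the remaining irreducibles all have degree $2$; the formula $\sum\chi(1)^2=|D|$ then gives $(|D|-2^{r+s})/4=2^{r+s-2}$ degree-$2$ characters, totalling $5\cdot 2^{r+s-2}$. For (v) with $r=s=1$, part (i) gives $|D|=8$, and since $x,y$ are involutions satisfying $(xy)^2=z\ne 1$ while conjugation by $x$ inverts $xy$, we obtain $D\cong D_8$. For $r\ge 2$, the three maximal subgroups correspond bijectively to the three index-$2$ subgroups of $D/\Phi(D)\cong C_2\times C_2$; absorbing $\Phi(D)=\langle x^2,y^2,z\rangle$ and using $(xy)^2=x^2y^2z^{-1}$ produces the three generating sets displayed in the lemma.

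The main obstacle is identifying the isomorphism type of the third maximal subgroup $M_3=\langle xy,x^2,z\rangle$. Each maximal subgroup is abelian because its non-central generator commutes with everything in $\Z(D)$; for $M_1$ and $M_2$ the listed generator orders multiply to $|D|/2=2^{r+s}$, forcing the claimed direct product. For $M_3$, however, the listed generators have orders $2^r,2^{r-1},2$ and fail to be independent when $r>s$. The remedy is the identity $(xy)^n=x^ny^nz^{-\binom{n}{2}}$, proved by induction from $yx=xyz^{-1}$, which yields $|xy|=2^r$ (as $\binom{2^r}{2}$ is even for $r\ge 2$) and $y^2=(xy)^2x^{-2}z\in M_3$. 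Rewriting $M_3=\langle xy,y^2,z\rangle$, the orders $2^r,2^{s-1},2$ now multiply correctly to $2^{r+s}=|M_3|$, and a short check in $D/\langle z\rangle$ shows $\langle xy\rangle\cap\langle y^2,z\rangle=1$, giving $M_3\cong C_{2^r}\times C_{2^{s-1}}\times C_2$.
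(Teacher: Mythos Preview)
Your argument is correct in every part. The paper itself omits the proof of this lemma entirely, remarking only that it is elementary, so there is no approach to compare against; your write-up would serve perfectly well as the missing proof.

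Two very minor remarks. First, in part~(ii) and again in the analysis of $M_3$, once you know that the commuting generators have orders whose product equals the order of the (already abelian) subgroup, the direct product decomposition is automatic: the obvious surjection from the external direct product is then a bijection. Your additional check that $\langle xy\rangle\cap\langle y^2,z\rangle=1$ is therefore redundant, though certainly not wrong. Second, the cocycle construction in part~(i) is fine as a sketch; if you wanted to avoid cocycles altogether you could instead observe that the subgroup $\langle (1,(12)(34)),(0,(13)(24))\rangle$ of $C_{2^r}\times S_4$ (or the analogous group with a second cyclic factor when $s>1$) already realises the relations with $|D|=2^{r+s+1}$, as in the spirit of Proposition~\ref{nnilfus1}.
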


We omit the (elementary) proof of this lemma. However, notice that $|P'|=2$ and $|P:\Phi(P)|=|P:\Z(P)|=p^2$ hold for every minimal nonabelian $p$-group $P$.
Rédei has also shown that for different pairs $(r,s)$ one gets nonisomorphic groups. This gives precisely $\bigl[\frac{n-1}{2}\bigr]$ isomorphism classes of these groups of order $2^n$.
For $r\ne 1$ (that is $|D|\ge 16$) the structure of the maximal subgroups shows that all these groups are nonmetacyclic.

Now we investigate the automorphism groups.

\begin{Lemma}\label{mnaaut}
The automorphism group $\Aut(D)$ is a $2$-group, if and only if $r\ne s$ or $r=s=1$.
\end{Lemma}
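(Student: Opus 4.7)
The plan is to analyse the natural homomorphism $\alpha\colon\Aut(D)\to\Aut(D/\Phi(D))$. By Lemma~\ref{lemmamna}(\ref{lemmamna2}), $D/\Phi(D)\cong C_2\times C_2$, so $\Aut(D/\Phi(D))\cong\GL_2(\F_2)\cong S_3$; the Burnside basis theorem forces $\ker\alpha$ to be a $2$-group, so $\Aut(D)$ is a $2$-group if and only if the image of $\alpha$ contains no element of order $3$, equivalently, if and only if $\Aut(D)$ does not permute the three maximal subgroups of $D$ cyclically.

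The case $r=s=1$ is immediate: by Lemma~\ref{lemmamna}(\ref{lemmamna4}), $D\cong D_8$ and $\Aut(D_8)\cong D_8$ is a $2$-group. For $r>s\geq 1$ (whence $r\geq 2$) I would inspect the maximal subgroups given by Lemma~\ref{lemmamna}(\ref{lemmamna4}): the subgroup $\langle x^2,y,z\rangle$ has exponent $\max(2^{r-1},2^s)<2^r$, whereas the other two both have exponent $2^r$. Hence $\langle x^2,y,z\rangle$ is distinguished and fixed by every $\phi\in\Aut(D)$, so $\alpha(\Aut(D))$ lies in a point stabiliser of the natural $S_3$-action and is therefore of $2$-power order.

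The main work is the case $r=s\geq 2$, where one must exhibit an order-$3$ automorphism modulo $\Phi(D)$. I would set $\phi(x):=xy$ and $\phi(y):=x$ and check that the five defining relations are preserved. Using $yx=zxy$ (a direct consequence of $[x,y]=z$ and $z^2=1$) one computes $(xy)^2=zx^2y^2$; because $x^2,y^2,z$ generate the abelian subgroup $\Phi(D)=\Z(D)$, iteration gives $(xy)^{2^k}=x^{2^k}y^{2^k}$ for $k\geq 2$, and the assumption $r=s$ yields $\phi(x)^{2^r}=1$. The relation $\phi(y)^{2^s}=x^{2^r}=1$ is trivial, and $[\phi(x),\phi(y)]=[xy,x]=z$ being central of order $2$ makes the three remaining relations automatic. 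Thus $\phi$ extends to an endomorphism of $D$; since its image contains $x=\phi(y)$ and $xy=\phi(x)$, it also contains $y$, hence all of $D$, and $\phi$ is an automorphism. Modulo $\Phi(D)$ it induces the $3$-cycle $(\bar x,\overline{xy},\bar y)$, proving $3\mid|\Aut(D)|$.

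The only non-trivial calculation is the evaluation of $(xy)^{2^r}$ inside $D$; the other verifications reduce to bookkeeping with central elements, and the case analysis by exponents of the maximal subgroups is itself a minor invariant-factor comparison.
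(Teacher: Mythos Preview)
Your proposal is correct and follows essentially the same approach as the paper: both arguments observe that a characteristic maximal subgroup (equivalently, a non-transitive action on $D/\Phi(D)$) forces $\Aut(D)$ to be a $2$-group, and both exhibit an explicit automorphism of order $3$ when $r=s\ge 2$. The only cosmetic difference is the choice of automorphism: the paper uses $x\mapsto y$, $y\mapsto x^{-1}y^{-1}$, which has order exactly $3$ on $D$, whereas your $x\mapsto xy$, $y\mapsto x$ is verified only to have order $3$ on $D/\Phi(D)$---but that is all that is needed.
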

\begin{proof}
If $r\ne s$ or $r=s=1$, then there exists a characteristic maximal subgroup of $D$ by Lemma~\ref{lemmamna}\eqref{lemmamna4}.
In these cases $\Aut(D)$ must be a $2$-group. Thus, we may assume $r=s\ge 2$. Then one can show that the map $x\mapsto y$, $y\mapsto x^{-1}y^{-1}$ is an automorphism of order $3$.
\end{proof}


\begin{Lemma}\label{homocaut}
Let $P\cong C_{2^{n_1}}\times\ldots\times C_{2^{n_k}}$ with $n_1,\ldots,n_k,k\in\mathbb{N}$. Then $\Aut(P)$ is a $2$-group, if and only if the $n_i$ are pairwise distinct.
\end{Lemma}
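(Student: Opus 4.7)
I would handle the two directions of the equivalence separately.

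For the "only if" direction, suppose $n_i=n_j$ for some $i\neq j$ and set $n:=n_i$. I can then write $P=C_{2^n}\times C_{2^n}\times Q$, where $Q$ collects the remaining cyclic factors. The map $(a,b)\mapsto(b,a^{-1}b^{-1})$ is an automorphism of order $3$ of $C_{2^n}\times C_{2^n}$ (essentially the same trick appearing in the proof of Lemma~\ref{mnaaut}); extending by the identity on $Q$ yields an order-$3$ automorphism of $P$, so $\Aut(P)$ is not a $2$-group.

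For the "if" direction, assume the $n_i$ are pairwise distinct and reorder so that $n_1<n_2<\ldots<n_k$. Fix generators $e_i$ of the factors. The plan is to consider the natural homomorphism $\pi:\Aut(P)\to\Aut(P/\Phi(P))$ where $\Phi(P)=P^2$ and $P/\Phi(P)\cong C_2^k$. By a standard consequence of the Burnside basis theorem, $\ker\pi$ is a $2$-group, so it suffices to show that $\pi(\Aut(P))$ is a $2$-group as well.

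To this end, I write an arbitrary endomorphism of $P$ as $\phi(e_i)=\sum_j a_{ij}e_j$. The relation $2^{n_i}e_i=0$ forces $2^{n_j-n_i}\mid a_{ij}$ whenever $n_j>n_i$, that is, whenever $j>i$. These coefficients therefore reduce to $0$ in $P/\Phi(P)$, so the matrix of $\pi(\phi)$ with respect to $\bar e_1,\ldots,\bar e_k$ is triangular, and bijectivity forces the diagonal entries to equal $1$. Hence $\pi(\Aut(P))$ is contained in the unitriangular subgroup of $\GL_k(\mathbb{F}_2)$ and is a $2$-group.

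The only external input is the Frattini-kernel lemma for finite $p$-groups; the remainder is elementary. The main subtlety to watch is the bookkeeping of the divisibility constraints on the coefficients $a_{ij}$, but I do not anticipate any serious obstacle.
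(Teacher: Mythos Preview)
Your argument is correct in both directions. The paper itself does not give a proof of this lemma but simply cites Lemma~2.7 in \cite{OlssonRedei}; your self-contained treatment therefore supplies more detail than the paper. The two ingredients you use---the order-$3$ automorphism of $C_{2^n}\times C_{2^n}$ (as in Lemma~\ref{mnaaut}) for one direction, and the Frattini-kernel fact together with the triangular shape of the induced map on $P/\Phi(P)$ for the other---are the standard ones, so your approach is presumably close in spirit to the cited reference.
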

\begin{proof}
See for example Lemma~2.7 in \cite{OlssonRedei}.
\end{proof}

Now we are able to decide, when a fusion system on $D$ is nilpotent. 

\begin{Theorem}\label{mnafusion}
Let $\mathcal{F}$ be a fusion system on $D$. Then $\mathcal{F}$ is nilpotent or $s=1$ or $r=s$. If $r=s\ge 2$, then $\mathcal{F}$ is controlled by $D$.
\end{Theorem}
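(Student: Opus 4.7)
The plan is to apply Alperin's fusion theorem: $\mathcal{F}$ is generated by $\Aut_{\mathcal{F}}(D)$ together with the $\mathcal{F}$-automorphism groups of its essential subgroups. Hence $\mathcal{F}$ is nilpotent iff $\Aut_{\mathcal{F}}(D)=\Inn(D)$ and $\mathcal{F}$ has no essential proper subgroup, and $\mathcal{F}$ is controlled by $D$ iff it has no essential proper subgroup. Lemma~\ref{mnaaut} gives that $\Aut(D)$ is a $2$-group when $r\ne s$, and the saturation axiom (placing $\Inn(D)$ as a Sylow $2$-subgroup of $\Aut_{\mathcal{F}}(D)$) then forces $\Aut_{\mathcal{F}}(D)=\Inn(D)$ in that case. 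So the task reduces to ruling out essential proper subgroups.

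Every essential $Q<D$ is $\mathcal{F}$-centric and hence self-centralising; since $D$ is minimal nonabelian, $Q$ must be a maximal abelian subgroup, which for $r\ge 2$ means $Q\in\{M_1,M_2,M_3\}$ from Lemma~\ref{lemmamna}\eqref{lemmamna4} (the case $r=s=1$ falls under $s=1$). For $M_i$ to be essential, $|\Out_{\mathcal{F}}(M_i)|$ must be divisible by an odd prime, so $\Aut(M_i)$ is not a $2$-group. Applying Lemma~\ref{homocaut} to the exponent tuples $(r-1,s,1)$ for $M_1$ and $(r,s-1,1)$ for $M_2\cong M_3$, and using $r\ge s\ge 2$, this happens only for $r=s+1$ (for $M_1$), $s=2$ (for $M_2$, $M_3$), or $r=s=2$. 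In all other configurations every $\Aut(M_i)$ is a $2$-group, no essential subgroup exists, and the theorem follows; in particular $\mathcal{F}$ is controlled by $D$ whenever $r=s\ge 3$.

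In the exceptional sub-cases an $A_4$-obstruction rules out essentiality. Since $3$ is the only odd prime dividing $|\Aut(M_i)|$, an essential $M_i$ would place an order-$3$ element $\tau$ into $\Out_{\mathcal{F}}(M_i)$. A direct calculation of the conjugation of $M_i$ by a coset representative of $D\setminus M_i$ shows that $c:=\Out_D(M_i)\cong C_2$ acts as a shift $m\mapsto m\chi(m)$ for a surjective homomorphism $\chi\colon M_i\to\langle z\rangle$ (sending a specified generator to $z$, the others to $1$). Any two shifts with image in $\Omega_1(M_i)$ commute (because $\chi$ vanishes on $\Omega_1(M_i)$) and are involutions; moreover a short computation with order-$3$ elements of $\GL_2(\mathbb{Z}/2^s)$ (for $M_1$) or of $\GL_2(\mathbb{F}_2)$ (for $M_2$, $M_3$) shows $\tau\chi\tau^{-1}\ne\chi$, because $\tau$ either moves the image $\langle z\rangle$ or moves $\ker(\chi)$. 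Hence $c$ and $\tau c\tau^{-1}$ are distinct commuting involutions, and $\langle c,\tau\rangle\supseteq V_4\rtimes C_3\cong A_4$ has a Sylow $2$-subgroup of order $\ge 4$, contradicting the saturation axiom that $\Out_D(M_i)\cong C_2$ is Sylow $2$ in $\Out_{\mathcal{F}}(M_i)$. The principal technical obstacle is verifying $\tau\chi\tau^{-1}\ne\chi$ in each iso type, which reduces to the observation that order-$3$ elements of $\GL_2(\mathbb{F}_2)=S_3$ act without fixed $1$-dimensional subspaces on $\mathbb{F}_2^2$.
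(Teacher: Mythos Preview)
Your argument is correct but takes a different route from the paper in the final obstruction step. Both proofs share the same reduction: an essential $Q$ must be one of the maximal subgroups $M_i$, and only in the parameter ranges you list can $\Aut(M_i)$ fail to be a $2$-group. The divergence is in how essentiality is then excluded.

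The paper observes uniformly that in each exceptional case $\Omega(Q)\subseteq\Z(D)$, so $\Aut_D(Q)$ lies in the kernel of the restriction map $\Aut_{\mathcal F}(Q)\to\Aut(\Omega(Q))$; since every automorphism of $Q$ of odd order acts faithfully on $\Omega(Q)$, this kernel is a nontrivial normal $2$-subgroup, whence $\pcore_2(\Aut_{\mathcal F}(Q))\ne 1$ and no strongly $2$-embedded subgroup can exist. Your approach instead shows directly that the specific involution $c=\Aut_D(M_i)$ is not centralised by any order-$3$ element $\tau$, producing a Klein four subgroup $\langle c,\tau c\tau^{-1}\rangle$ in $\Out_{\mathcal F}(M_i)$ and contradicting that $\Out_D(M_i)\cong C_2$ is Sylow. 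The two are closely related: the fact that your shifts commute is precisely the statement that they both act trivially on $\Omega_1(M_i)$, which is the paper's starting point. The paper's argument is shorter and avoids the case-by-case verification of $\tau\chi\tau^{-1}\ne\chi$; your argument is more explicit and yields the slightly sharper conclusion that $\Out_{\mathcal F}(M_i)$ would actually contain a $2$-subgroup of order $\ge 4$. One small imprecision: your sentence about $\GL_2(\mathbb Z/2^s)$ for $M_1$ is not quite the right reduction (the relevant action is on $M_1/\Phi(M_1)\cong C_2^3$ and on $\Omega_1(M_1)\cong C_2^3$, where one checks that $\tau$ either moves $\langle z\rangle$ or fails to preserve the image of $\ker\chi$), but the conclusion $\tau c\tau^{-1}\ne c$ is correct in every case.
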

\begin{proof}
We assume $s\ne 1$. Let $Q<D$ be an $\mathcal{F}$-essential subgroup. Since $Q$ is also $\mathcal{F}$-centric, we get $\C_P(Q)=Q$. This shows that $Q$ is a maximal subgroup of $D$. By Lemma~\ref{lemmamna}\eqref{lemmamna4} and Lemma~\ref{homocaut}, one of the following holds:
\begin{enumerate}[(i)]
\item $r=2\ (\,=s)$ and $Q\in\{\langle x^2,y,z\rangle,\langle x,y^2,z\rangle,\langle xy,x^2,z\rangle\}$,
\item $r>s=2$ and $Q\in\bigl\{\langle x,y^2,z\rangle,\langle xy,x^2,z\rangle\bigr\}$,
\item $r=s+1$ and $Q=\langle x^2,y,z\rangle$.
\end{enumerate}
In all cases $\Omega(Q)\subseteq\Z(P)$. Let us consider the action of $\Aut_{\mathcal{F}}(Q)$ on $\Omega(Q)$. The subgroup $1\ne P/Q=\N_P(Q)/\C_P(Q)\cong\Aut_P(Q)\le\Aut_\mathcal{F}(Q)$ acts trivially on $\Omega(Q)$. On the other hand every nontrivial automorphism of odd order acts nontrivially on $\Omega(Q)$ (see for example 8.4.3 in \cite{Kurzweil}). Hence, the kernel of this action is a nontrivial normal $2$-subgroup of $\Aut_\mathcal{F}(Q)$. In particular $\pcore_2(\Aut_\mathcal{F}(Q))\ne 1$. 
But then $\Aut_\mathcal{F}(Q)$ cannot contain a strongly $2$-embedded subgroup. 

This shows that there are no $\mathcal{F}$-essential subgroups. Now the claim follows from Lemma~\ref{mnaaut} and Alperin's fusion theorem.
\end{proof}

Now we consider a kind of converse. If $r=s=1$, then there are nonnilpotent fusion systems on $D$. In the case $r=s\ge 2$ one can construct a nonnilpotent fusion system with a suitable semidirect product (see Lemma~\ref{mnaaut}). We show that there is also a nonnilpotent fusion system in the case $r>s=1$.

\begin{Proposition}\label{nnilfus1}
If $s=1$, then there exists a nonnilpotent fusion system on $D$. 
\end{Proposition}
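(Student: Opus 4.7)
The plan is to realize a nonnilpotent fusion system on $D$ as the $G$-fusion $\mathcal{F}_D(G)$ for an explicit finite group $G$ having $D$ as a Sylow $2$-subgroup. Set
\[
G := A_4 \rtimes \langle x\rangle,
\]
where $\langle x\rangle \cong C_{2^r}$ acts on $A_4 = V \rtimes \langle t\rangle$ (with $V = \langle y,z\rangle \cong C_2\times C_2$ and $t$ of order $3$) through the quotient $\langle x\rangle/\langle x^2\rangle \cong C_2$, the nontrivial element acting as the outer automorphism of $A_4$ (namely conjugation in $S_4 = \Aut(A_4)$ by a suitable transposition) chosen to fix $z$ and send $y\mapsto yz$. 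Then $|G| = 3\cdot 2^{r+2}$.

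My first step would be to identify $P := \langle x,y,z\rangle \le G$ with $D$. Since $x^2$ lies in the kernel of the action on $A_4$, one has $[x^2,y] = [x^2,z] = 1$, and by construction $[x,y] = z$ and $[x,z] = [y,z] = 1$. Together with $x^{2^r} = y^2 = z^2 = 1$, these are precisely the relations defining $D$ in case~\eqref{redei2}. A size count gives $|P| = 2^r \cdot |V| = 2^{r+2} = |D|$ by Lemma~\ref{lemmamna}\eqref{lemmamna1}, so $P$ is a Sylow $2$-subgroup of $G$.

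Next I would exhibit a non-$D$-inner $G$-fusion automorphism of odd order. The element $t \in A_4 \le G$ normalizes the subgroup $Q := \langle x^2, y, z\rangle \le P$: it centralizes $x^2 \in \C_G(A_4)$ and cyclically permutes $\{y,z,yz\}$ (in the degenerate case $r=1$ one simply takes $Q = V$). Consequently the induced automorphism $c_t|_Q$ has order $3$. However, every $D$-inner automorphism of $Q$ lies in $\Aut_D(Q) \cong D/Q \cong C_2$, which has no element of order $3$. Therefore $c_t|_Q \in \Aut_{\mathcal{F}_D(G)}(Q)$ is not realized by conjugation in $D$, so $\mathcal{F}_D(G) \ne \mathcal{F}_D(D)$, i.e.\ $\mathcal{F}_D(G)$ is nonnilpotent.

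The only step requiring actual calculation is the presentation check $P \cong D$; everything else is immediate from the structure of $A_4$ and $S_4$, so I do not anticipate any real obstacle.
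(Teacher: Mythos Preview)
Your proof is correct and takes essentially the same approach as the paper: both realize the nonnilpotent fusion system via a semidirect product $G=A_4\rtimes C_{2^r}$ and identify $D$ with its Sylow $2$-subgroup. The only cosmetic differences are that the paper lets the generator of $C_{2^r}$ act on $A_4$ through an automorphism of order $4$ (rather than $2$), and it deduces nonnilpotence from the fact that $A_4$ (hence $G$) fails to be $2$-nilpotent instead of exhibiting the order-$3$ fusion automorphism of $Q=\langle x^2,y,z\rangle$ explicitly.
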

\begin{proof}
We may assume $r\ge 2$. Let $A_4$ be the alternating group of degree $4$, and let $H:=\langle\widetilde{x}\rangle\cong C_{2^r}$.
Moreover, let $\phi:H\to\Aut(A_4)\cong S_4$ such that $\phi_{\widetilde{x}}\in\Aut(A_4)$ has order $4$.
Write $\widetilde{y}:=(12)(34)\in A_4$ and choose $\phi$ such that $\phi_{\widetilde{x}}(\widetilde{y}):=(13)(24)$. Finally, let $G:=A_4\rtimes_\phi H$. Since all $4$-cycles in $S_4$ are conjugate, $G$ is uniquely determined up to isomorphism.
Because $[\widetilde{x},\widetilde{y}]=(13)(24)(12)(34)=(14)(23)$, we get $\langle\widetilde{x},\widetilde{y}\rangle\cong D$. 
The fusion system $\mathcal{F}_G(D)$ is nonnilpotent, since $A_4$ (and therefore $G$) is not $2$-nilpotent.
\end{proof}

\section{The case $r>s=1$}\label{secrs1}
Now we concentrate on the case $r>s=1$, i.e.
\[D:=\langle x,y\mid x^{2^r}=y^2=[x,y]^2=[x,x,y]=[y,x,y]=1\rangle\]
with $r\ge 2$. As before $z:=[x,y]$. We also assume that $B$ is a nonnilpotent block. By Lemma~\ref{mnaaut}, $\Aut(D)$ is a $2$-group, and the inertial index $t(B)$ of $B$ equals $1$.

\subsection{The $B$-subsections}
Olsson has already obtained the conjugacy classes of so called $B$-subsections (see \cite{OlssonRedei}). However, his results contain errors. For example he missed the necessary relations $[x,x,y]$ and $[y,x,y]$ in the definition of $D$. 

In the next lemma we denote by $\Bl(RH)$ the set of blocks of a finite group $H$. If $H\le G$ and $b\in\Bl(RH)$, then $b^G$ is the Brauer correspondent of $b$ (if exists). Moreover, we use the notion of subpairs and subsections (see \cite{Olssonsubpairs}).

\begin{Lemma}\label{repkonjs1}
Let $b\in\Bl(RD\C_G(D))$ be a Brauer correspondent of $B$. For $Q\le D$ let $b_Q\in\Bl(RQ\C_G(Q))$ such that $(Q,b_Q)\le(D,b)$. Set $\mathcal{T}:=\Z(D)\cup\{x^iy^j:i,j\in\mathbb{Z},\ i\text{ odd}\}$. Then
\[\bigcup_{a\in\mathcal{T}}{\Bigl\{\bigl(a,b_{\C_D(a)}^{\C_G(a)}\bigr)\Bigr\}}\]
is a system of representatives for the conjugacy classes of $B$-subsections. Moreover, $|\mathcal{T}|=2^{r+1}$.
\end{Lemma}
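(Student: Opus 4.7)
The plan is to pass to fusion systems. By Brauer's theory of subpairs (\cite{Olssonsubpairs}), the $G$-conjugacy classes of $B$-subsections are in bijection with the $\mathcal{F}$-conjugacy classes of elements of $D$, where $\mathcal{F}$ is the fusion system of $B$ on $D$ determined by the Brauer pair $(D,b)$; under this correspondence each $a\in D$ represents the subsection $\bigl(a,\,b_{\C_D(a)}^{\C_G(a)}\bigr)$ because $\C_D(a)$ is a defect group of the relevant block of $\C_G(a)$. So the task reduces to checking that $\mathcal{T}$ meets every $\mathcal{F}$-class of $D$-elements in exactly one point.

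First I would identify the $\mathcal{F}$-essential subgroups. Since $\Aut(D)$ is a $2$-group (Lemma~\ref{mnaaut}) we have $\Aut_\mathcal{F}(D)=\Inn(D)$, matching $t(B)=1$. Arguing as in the proof of Theorem~\ref{mnafusion}, any $\mathcal{F}$-essential $Q<D$ is $\mathcal{F}$-centric; being abelian (as $D$ is minimal nonabelian) it satisfies $\C_D(Q)=Q$, forcing $Q$ to be a maximal subgroup of $D$. Among the three maximal subgroups from Lemma~\ref{lemmamna}\eqref{lemmamna4}, Lemma~\ref{homocaut} leaves only $Q_1:=\langle x^2,y,z\rangle\cong C_{2^{r-1}}\times C_2\times C_2$ with non-$2$-group automorphism group; the other two, each $\cong C_{2^r}\times C_2$, have two cyclic factors of distinct orders (using $r>s=1$). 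Since $B$ is nonnilpotent, $Q_1$ is in fact $\mathcal{F}$-essential.

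Next I would analyze $\Aut_\mathcal{F}(Q_1)$ and enumerate orbits. By saturation its Sylow $2$-subgroup is $\Aut_D(Q_1)\cong C_2$, generated by $c_x\colon x^2\mapsto x^2,\ y\mapsto yz,\ z\mapsto z$; the only odd prime dividing the relevant part of $|\Aut(Q_1)|$ is $3$, from the $S_3\cong\GL(2,\mathbb{F}_2)$ action on the $C_2^2$-factor, so $\Aut_\mathcal{F}(Q_1)$ contains an order-$3$ automorphism $\alpha$ (determined up to inversion by $c_x\alpha c_x=\alpha^{-1}$). A direct calculation---e.g.\ taking $\alpha\colon x^2\mapsto x^2y,\ y\mapsto z,\ z\mapsto yz$---shows that, for each $m$ with $0\le m<2^{r-1}$, the $\langle\alpha,c_x\rangle$-orbits on the $x^{2m}$-coset $\{x^{2m}y^az^b\}$ are one singleton and one triple, and that each of these meets $\Z(D)=\langle x^2,z\rangle$ in exactly one point. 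Hence $\Z(D)$ is a transversal for the $\mathcal{F}$-classes inside $Q_1$. Outside $Q_1$ (elements $x^iy^az^b$ with $i$ odd) $\Aut_\mathcal{F}(Q_1)$ does not apply; using $xyx^{-1}=yz$ and $yxy^{-1}=xz$ one computes the $D$-orbit of $x^iy^az^b$ to be $\{x^iy^a,\,x^iy^az\}$, with unique representative $x^iy^a$. Combining yields $\mathcal{T}$ as the required transversal, with $|\mathcal{T}|=|\Z(D)|+2^{r-1}\cdot 2=2^r+2^r=2^{r+1}$ by Lemma~\ref{lemmamna}\eqref{lemmamna2}. The main obstacle is pinning down the precise essential automorphism $\alpha$ (for $r\ge 3$ it cannot fix $\langle x^2\rangle$ pointwise) and verifying the essential uniqueness of the nonnilpotent saturated fusion system on $D$; once this is in hand the orbit bookkeeping is routine.
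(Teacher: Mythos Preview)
Your fusion-system approach is sound and considerably more self-contained than the paper's, which simply cites Olsson's Proposition~2.14 (noting that the same argument works for all $r\ge 2$ after fixing a misattributed reference). So the two routes differ: the paper defers to \cite{OlssonRedei}, while you compute $\mathcal{F}$-classes directly from Alperin's fusion theorem and the structure of $\Aut_{\mathcal{F}}(Q_1)$.

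That said, the ``obstacle'' you flag is not a real one, and your attempt to resolve it leads you slightly astray. You do \emph{not} need to pin down $\alpha$ explicitly, nor prove uniqueness of the nonnilpotent fusion system. Saturation gives $\Aut_D(Q_1)=\langle c_x\rangle\cong C_2$ as a Sylow $2$-subgroup of $\Aut_{\mathcal{F}}(Q_1)$, so $|\Aut_{\mathcal{F}}(Q_1)|=6$ and hence $c_x\alpha c_x^{-1}=\alpha^{-1}$ for either generator $\alpha$ of the $3$-part. This relation alone forces $\C_{Q_1}(\alpha)\subseteq\Z(D)$: if $u\in\Z(D)$ and $\alpha(u)\in\Z(D)$, then (since $c_x$ fixes $\Z(D)$ pointwise) $\alpha(u)=c_x\alpha c_x^{-1}(u)=\alpha^{-1}(u)$, whence $\alpha(u)=u$. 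Together with $|\C_{Q_1}(\alpha)|=2^{r-1}$ (Lemma~\ref{auts1}) and $|\Z(D)|=2^r$, a count shows each $\langle\alpha\rangle$-orbit meets $\Z(D)$ exactly once; and $c_x(\alpha(w))=\alpha^{-1}(w)$ shows $c_x$ preserves each triple, so no merging occurs. This is precisely the argument the paper gives later in the proof of Lemma~\ref{lokallb}. In particular your parenthetical claim that for $r\ge 3$ the automorphism $\alpha$ ``cannot fix $\langle x^2\rangle$ pointwise'' is both unjustified and unnecessary: the choice $\alpha(x^2)=x^2$, $\alpha(y)=z$, $\alpha(z)=yz$ is perfectly compatible with $c_x\alpha c_x^{-1}=\alpha^{-1}$.

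One genuine gap: for $r=2$ you have $Q_1\cong C_2^3$ and $\Aut(Q_1)\cong\GL(3,2)$ has order $168$, so your appeal to ``the $\GL(2,\mathbb{F}_2)$-action on the $C_2^2$-factor'' does not rule out $7\mid|\Aut_{\mathcal{F}}(Q_1)|$. You need an extra argument here (e.g.\ that no subgroup of $\GL(3,2)$ with Sylow $2$-subgroup of order $2$ has order divisible by $7$; equivalently, $\N_{\GL(3,2)}(P_7)$ has odd order). The paper handles this in Lemma~\ref{lokallb} via the strongly $2$-embedded condition.
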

\begin{proof}
If $r=2$, then the claim follows from Proposition~2.14 in \cite{OlssonRedei}. For $r\ge 3$ the same argument works. However, Olsson refers wrongly to Proposition~2.11 (the origin of this mistake already lies in Lemma~2.8). 
\end{proof}

From now on we write $b_a:=b_{\C_D(a)}^{\C_G(a)}$ for $a\in\mathcal{T}$.

\begin{Lemma}\label{auts1}
Let $P\cong C_{2^s}\times C_2^2$ with $s\in\mathbb{N}$, and let $\alpha$ be an automorphism of $P$ of order $3$. Then $\C_P(\alpha):=\{b\in P:\alpha(b)=b\}\cong C_{2^s}$. 
\end{Lemma}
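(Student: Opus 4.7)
The plan is to exploit coprime action. Since $\gcd(3,|P|)=1$ and $P$ is abelian, I would invoke the Fitting decomposition to write $P=\C_P(\alpha)\times[P,\alpha]$ as a product of $\alpha$-invariant subgroups on which $\alpha$ is, respectively, the identity and fixed-point-free. This reduces the statement to the determination of the direct factor $[P,\alpha]$.

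Next I would study the restriction of $\alpha$ to $\Omega_1([P,\alpha])\le\Omega_1(P)\cong C_2^3$. On this $\mathbb{F}_2$-space $\alpha$ acts $\mathbb{F}_2$-linearly of order dividing $3$ with no nonzero fixed vectors, so its minimal polynomial must be $t^2+t+1$. Since this polynomial is irreducible over $\mathbb{F}_2$, the group $\Omega_1([P,\alpha])$ inherits the structure of an $\mathbb{F}_4$-vector space, so in particular its $\mathbb{F}_2$-dimension is even. Sitting inside $C_2^3$ and being nontrivial (as $\alpha\ne 1$ forces $[P,\alpha]\ne 1$), it must be isomorphic to $C_2\times C_2$; in particular $[P,\alpha]$ has rank $2$.

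The endgame is a direct-summand analysis. As a rank-$2$ direct factor of $P\cong C_{2^s}\times C_2\times C_2$, the group $[P,\alpha]$ is isomorphic either to $C_2\times C_2$ or to $C_{2^s}\times C_2$. The second possibility is ruled out for $s\ge 2$ by Lemma~\ref{homocaut}, which says that $\Aut(C_{2^s}\times C_2)$ is a $2$-group and therefore cannot contain an element of order $3$. Thus $[P,\alpha]\cong C_2\times C_2$ in every case, whence the complementary summand carries the remaining invariant $(2^s)$, giving $\C_P(\alpha)\cong C_{2^s}$.

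The step I expect to require the most care is the \emph{even rank} conclusion---namely, verifying that a fixed-point-free order-$3$ action on an elementary abelian $2$-group forces the rank to be even, via the $\mathbb{F}_4$-module structure induced by $\alpha^2+\alpha+1=0$. Once that is in place, the remainder is just a comparison of invariant-factor types of direct summands of $P$ and a single application of Lemma~\ref{homocaut}.
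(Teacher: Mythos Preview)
Your argument is correct, but it follows a genuinely different line from the paper's. The paper instead observes that the kernel of the restriction $\Aut(P)\to\Aut(P/\Phi(P))\cong\GL(3,2)$ is a $2$-group, so $3$ divides $|\Aut(P)|$ exactly once; by Sylow's theorem all order-$3$ automorphisms are therefore conjugate (to each other or their inverses), and it suffices to exhibit one explicit $\alpha$ fixing the $C_{2^s}$-factor and cycling the two $C_2$-factors, for which $\C_P(\alpha)$ is visibly $C_{2^s}$.

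Your approach trades that explicit computation for structure theory: the Fitting splitting $P=\C_P(\alpha)\times[P,\alpha]$, the $\mathbb{F}_4$-module trick forcing $\Omega_1([P,\alpha])$ to have even rank, and then an invariant-factor comparison of rank-$2$ summands of $C_{2^s}\times C_2\times C_2$ together with Lemma~\ref{homocaut} to exclude $C_{2^s}\times C_2$ when $s\ge 2$. This is slightly longer but more conceptual, avoids any reference to the order of $\GL(3,2)$ or to Sylow conjugacy in $\Aut(P)$, and generalizes cleanly (the same argument shows that a coprime order-$3$ automorphism of any abelian $2$-group acts on a direct factor of even rank). The paper's argument is quicker for this specific $P$ because the Sylow $3$-subgroup of $\Aut(P)$ happens to be cyclic of order~$3$.
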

\begin{proof}
We write $P=\langle a\rangle\times\langle b\rangle\times\langle c\rangle$ with $|\langle a\rangle|=2^s$. It is well known that the kernel of the restriction map $\Aut(P)\to\Aut(P/\Phi(P))$ is a $2$-group. Since $|\Aut(P/\Phi(P))|=|\GL(3,2)|=168=2^3\cdot 3\cdot 7$, it follows that $|\Aut(P)|$ is divisible by $3$ only once. In particular every automorphism of $P$ of order $3$ is conjugate to $\alpha$ or $\alpha^{-1}$. Thus, we may assume $\alpha(a)=a$, $\alpha(b)=c$ and $\alpha(c)=bc$. Then $\C_P(\alpha)=\langle a\rangle\cong C_{2^s}$.
\end{proof}

\subsection{The numbers $k(B)$, $k_i(B)$ and $l(B)$}
The next step is to determine the numbers $l(b_a)$. The case $r=2$ needs special attention, because in this case $D$ contains an elementary abelian maximal subgroup of order $8$. We denote the inertial group of a block $b\in\Bl(RH)$ with $H\unlhd G$ by $\inertial_G(b)$.

\begin{Lemma}\label{lokallb}
There is an element $c\in\Z(D)$ of order $2^{r-1}$ such that $l(b_a)=1$ for all $a\in\mathcal{T}\setminus\langle c\rangle$.
\end{Lemma}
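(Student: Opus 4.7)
The plan is to take $c:=x^2\in\Z(D)$, which has order $2^{r-1}$, and show that for every $a\in\mathcal{T}\setminus\langle c\rangle$ the subsection block $b_a$ is nilpotent; then $l(b_a)=1$ by Broué-Puig. Write $\mathcal{F}:=\mathcal{F}_{(D,b_D)}(G)$ for the fusion system of $B$, and recall that the fusion system of $b_a$ on $\C_D(a)$ is the centralizer $\C_\mathcal{F}(a)$. By Lemma~\ref{mnaaut}, $\Aut_\mathcal{F}(D)=\Inn(D)$, so every nontrivial $\mathcal{F}$-fusion factors through the $\mathcal{F}$-essential subgroups, which must be centric and hence maximal in $D$. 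Lemma~\ref{lemmamna}\eqref{lemmamna4} lists the maximal subgroups; by Lemma~\ref{homocaut} the two of type $C_{2^r}\times C_2$, namely $\langle x,z\rangle$ and $\langle xy,z\rangle$, have $2$-group automorphism groups. So the only essential candidate is $E:=\langle x^2,y,z\rangle\cong C_{2^{r-1}}\times C_2^2$, and by Lemma~\ref{auts1} one has $\C_E(\alpha)=\langle x^2\rangle=\langle c\rangle$ for any order-$3$ automorphism $\alpha\in\Aut_\mathcal{F}(E)$.

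I would then split into two cases. If $a\in\mathcal{T}\setminus\Z(D)$, then $a=x^iy^j$ with $i$ odd and $\C_D(a)$ equals one of the abelian maximal subgroups $\langle x,z\rangle$ or $\langle xy,z\rangle$. Any $\C_\mathcal{F}(a)$-essential subgroup of $\C_D(a)$ would have to be centric in the abelian group $\C_D(a)$, hence equal to $\C_D(a)$ itself; but $\Aut(\C_D(a))$ is a $2$-group (Lemma~\ref{homocaut}), so no such essential subgroup exists and $\C_\mathcal{F}(a)$ is nilpotent. The condition $a\notin\langle c\rangle\subseteq\Z(D)$ is automatic here.

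If $a\in\Z(D)\setminus\langle c\rangle$, then $\C_D(a)=D$, and since $a$ is central every $\C_\mathcal{F}(a)$-centric subgroup is already $\mathcal{F}$-centric, so the only essential candidate is still $E$. Because $a\in E$, the extension condition in the definition of $\C_\mathcal{F}(a)$ collapses to $\phi(a)=a$, whence $\Aut_{\C_\mathcal{F}(a)}(E)$ coincides with the stabilizer of $a$ in $\Aut_\mathcal{F}(E)$. Since $a\notin\C_E(\alpha)=\langle c\rangle$, this stabilizer contains no element of order $3$; it is therefore a $2$-group, so $E$ is not $\C_\mathcal{F}(a)$-essential and $\C_\mathcal{F}(a)$ is nilpotent. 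In both cases $b_a$ is a nilpotent block, giving $l(b_a)=1$. The technical crux is the implication ``$a\notin\langle x^2\rangle\Rightarrow$ stabilizer of $a$ in $\Aut_\mathcal{F}(E)$ is a $2$-group'', which rests on Lemma~\ref{auts1}.
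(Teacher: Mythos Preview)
Your fusion-system framework is exactly right and mirrors the paper's block-theoretic argument, but there is a genuine gap at the sentence ``by Lemma~\ref{auts1} one has $\C_E(\alpha)=\langle x^2\rangle=\langle c\rangle$''. Lemma~\ref{auts1} only tells you that $\C_E(\alpha)\cong C_{2^{r-1}}$; it does \emph{not} identify the fixed subgroup with $\langle x^2\rangle$. For $r\ge 3$ there are four cyclic subgroups of $E$ of order $2^{r-1}$, namely $\langle x^2\rangle$, $\langle x^2y\rangle$, $\langle x^2z\rangle$, $\langle x^2yz\rangle$, and an abstract automorphism of order $3$ can have any of them as its fixed-point group. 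Concretely, the automorphism of $E=\langle x^2z\rangle\times\langle y\rangle\times\langle z\rangle$ fixing $x^2z$ and sending $y\mapsto z\mapsto yz\mapsto y$ has order $3$, satisfies the relation $({\rm conj}_x)\alpha({\rm conj}_x)^{-1}=\alpha^{-1}$, and has $\C_E(\alpha)=\langle x^2z\rangle\neq\langle x^2\rangle$. So your choice $c=x^2$ need not work; indeed the paper later (in the proof of Theorem~\ref{s1klki} for $r=2$) explicitly carries both possibilities $c\in\{x^2,x^2z\}$.

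The missing step is this: define $c$ to be a generator of $\C_E(\alpha)$, whatever that subgroup turns out to be, and then \emph{prove} $c\in\Z(D)$. The point is that $\Aut_\mathcal{F}(E)\cong S_3$ with Sylow $2$-subgroup $\Aut_D(E)=\langle{\rm conj}_x\rangle$, so ${\rm conj}_x$ inverts $\alpha$; hence $\C_E(\alpha)=\C_E(\alpha^{-1})$ is normalised by $x$, and one checks that the only $x$-invariant cyclic subgroups of $E$ of order $2^{r-1}$ are $\langle x^2\rangle$ and $\langle x^2z\rangle$, both contained in $\Z(D)=\langle x^2,z\rangle$. (The paper does this same computation in coordinates, showing that a generator $x^{2i}y^jz^k$ of $\C_E(\alpha)$ must have $j$ even.) With this in place, your two cases go through verbatim: for $a\in\Z(D)\setminus\langle c\rangle$ the stabiliser of $a$ in $\Aut_\mathcal{F}(E)$ is a $2$-group, so $\C_\mathcal{F}(a)$ is nilpotent; and for $a\notin\Z(D)$ the centraliser $\C_D(a)$ is one of the two maximal subgroups of type $C_{2^r}\times C_2$, whose automorphism group is a $2$-group. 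Note also that you should justify $\Aut_\mathcal{F}(E)\cong S_3$ (in particular exclude a $7$-element when $r=2$): this follows because $\Aut_D(E)\cong C_2$ is Sylow in $\Aut_\mathcal{F}(E)$ by saturation, and $\GL(3,2)$ has no subgroup of order $14$.
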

\begin{proof}\hfill\\
\textbf{Case 1:} $a\in\Z(D)$. \\
Then $b_a=b_D^{\C_G(a)}$ is a block with defect group $D$ and Brauer correspondent $b_D\in\Bl(RD\C_{\C_G(a)}(D))$. 
Let $M:=\langle x^2,y,z\rangle\cong C_{2^{r-1}}\times C_2^2$. Since $B$ is nonnilpotent, there exists an element $\alpha\in\inertial_{\N_G(M)}(b_M)$ such that $\alpha\C_G(M)\in\inertial_{\N_G(M)}(b_M)/\C_G(M)$ has order $q\in\{3,7\}$. 
We will exclude the case $q=7$. In this case $r=2$ and $\inertial_{\N_G(M)}(b_M)/\C_G(M)$ is isomorphic to a subgroup of $\Aut(M)\cong\GL(3,2)$.
Since
\[(M,{^d b_M})={^d(M,b_M)}\le{^d(D,b_D)}=(D,b_D)\]
for all $d\in D$, we have $D\subseteq\inertial_{\N_G(M)}(b_M)$. This implies $\inertial_{\N_G(M)}(b_M)/\C_G(M)\cong\GL(3,2)$, because $\GL(3,2)$ is simple. By Satz~1 in \cite{Bender}, this contradicts the fact that $\inertial_{\N_G(M)}(b_M)/\C_G(M)$ contains a strongly $2$-embedded subgroup (of course this can be shown “by hand” without invoking \cite{Bender}). Thus, we have shown $q=3$. Now
\[\inertial_{\N_G(M)}(b_M)/\C_G(M)\cong S_3\]
follows easily.
By Lemma~\ref{auts1} there is an element $c:=x^{2i}y^jz^k\in\C_M(\alpha)$ ($i,j,k\in\mathbb{Z}$) of order $2^{r-1}$. Let us assume that $j$ is odd. Since $x\alpha x\equiv x\alpha x^{-1}\equiv\alpha^{-1}\pmod{C_G(M)}$ we get
\begin{align*}
\alpha(x^{2i}y^jz^{k+1})\alpha^{-1}&=\alpha x(x^{2i}y^jz^k)x^{-1}\alpha^{-1}=x\alpha^{-1}(x^{2i}y^jz^k)\alpha x^{-1}\\
&=x(x^{2i}y^jz^k)x^{-1}=x^{2i}y^jz^{k+1}.
\end{align*}
But this contradicts Lemma~\ref{auts1}. Hence, we have proved that $j$ is even. In particular $c\in\Z(D)$. For $a\notin\langle c\rangle$ we have $\alpha\notin\C_G(a)$ and $l(b_a)=1$. While in the case $a\in\langle c\rangle$ we get $\alpha\in\C_G(a)$, and $b_a$ is nonnilpotent. Thus, in this case $l(b_a)$ remains unknown.

\textbf{Case 2:} $a\notin\Z(D)$. \\
Let $\C_D(a)=\langle\Z(D),a\rangle=:M$. Since $(M,b_M)$ is a Brauer subpair, $b_M$ has defect group $M$. It follows from $(M,b_M)\unlhd (D,b_D)$ that also $b_a$ has defect group $M$ and Brauer correspondent $b_M$. In case $M\cong C_{2^r}\times C_2$ we get $l(b_a)=1$. Now let us assume $M\cong C_{2^{r-1}}\times C_2^2$. 
As in the first case, we choose $\alpha\in\inertial_{\N_G(M)}(b_M)$ such that $\alpha\C_G(M)\in\inertial_{\N_G(M)}(b_M)/\C_G(M)$ has order $3$. Since $a\notin\Z(D)$, we derive $\alpha\notin\C_G(a)$ and $t(b_a)=l(b_a)=1$. 
\end{proof}

We denote by $\IBr(b_u):=\{\phi_u\}$ for $u\in\mathcal{T}\setminus\langle c\rangle$ the irreducible Brauer character of $b_u$. Then the generalized decomposition numbers $d^u_{\chi\phi_u}$ for $\chi\in\Irr(B)$ form a column $d(u)$. Let $2^k$ be the order of $u$, and let $\zeta:=\zeta_{2^k}$ be a primitive $2^k$-th root of unity. Then the entries of $d(u)$ lie in the ring of integers $\mathbb{Z}[\zeta]$. Hence, there exist integers $a_i^{u}(\chi)\in\mathbb{Z}$ such that
\[d_{\chi\phi_u}^u=\sum_{i=0}^{2^{k-1}-1}{a_i^{u}(\chi)\zeta^i}.\]
We expand this by
\[a_{i+2^{k-1}}^u:=-a_i^u\]
for all $i\in\mathbb{Z}$.

Let $|G|=2^am$ where $2\nmid m$. We may assume $\mathbb{Q}(\zeta_{|G|})\subseteq K$. Then $\mathbb{Q}(\zeta_{|G|})|\mathbb{Q}(\zeta_m)$ is a Galois extension, and we denote the corresponding Galois group by
\[\mathcal{G}:=\Gal(\mathbb{Q}(\zeta_{|G|})|\mathbb{Q}(\zeta_m)).\] 
Restriction gives an isomorphism
\[\mathcal{G}\cong\Gal(\mathbb{Q}(\zeta_{2^a})|\mathbb{Q}).\]
In particular $|\mathcal{G}|=2^{a-1}$.
For every $\gamma\in\mathcal{G}$ there is a number $\widetilde{\gamma}\in\mathbb{N}$ such that $\gcd(\widetilde{\gamma},|G|)=1$, $\widetilde{\gamma}\equiv 1\pmod{m}$, and $\gamma(\zeta_{|G|})=\zeta_{|G|}^{\widetilde{\gamma}}$ hold. Then $\mathcal{G}$ acts on the set of subsections by
\[^\gamma(u,b):=(u^{\widetilde{\gamma}},b).\]
For every $\gamma\in\mathcal{G}$ we get
\[d(u^{\widetilde{\gamma}})=\sum_{s\in\mathcal{S}}{a_s^{u}\zeta_{2^k}^{s\widetilde{\gamma}}}\]
for every system $\mathcal{S}$ of representatives of the cosets of $2^{k-1}\mathbb{Z}$ in $\mathbb{Z}$.
It follows that
\begin{equation}\label{aiuspur}
a_s^u=2^{1-a}\sum_{\gamma\in\mathcal{G}}{d\bigl(u^{\widetilde{\gamma}}\bigr)\zeta_{2^k}^{-\widetilde{\gamma}s}}
\end{equation}
for $s\in\mathcal{S}$.

Now let $u\in\mathcal{T}\setminus\Z(D)$ and $M:=\C_D(u)$. 
Then $b_u$ and $b_M^{\inertial_{\N_G(M)}(b_M)\cap \N_G(\langle u\rangle)}$ have $M$ as defect group, because $D\nsubseteq \N_G(\langle u\rangle)$. By (6B) in \cite{BrauerBlSec2} it follows that the $2^{r-1}$ distinct $B$-subsections of the form $^\gamma(u,b_u)$ with $\gamma\in\mathcal{G}$ are pairwise nonconjugate. The same holds for $u\in\Z(D)\setminus\{1\}$. Using this and equation~\eqref{aiuspur} we can adapt Lemma~3.9 in \cite{Olsson}:

\begin{Lemma}\label{orthogonal}
Let $c\in\Z(D)$ as in Lemma~\ref{lokallb}, and let $u,v\in\mathcal{T}\setminus\langle c\rangle$ with $|\langle u\rangle|=2^k$ and $|\langle v\rangle|=2^l$. Moreover, let $i\in\{0,1,\ldots,2^{k-1}-1\}$ and $j\in\{0,1,\ldots,2^{l-1}-1\}$. 
If there exist $\gamma\in\mathcal{G}$ and $g\in G$ such that $^g(u,b_u)={^\gamma(v,b_v)}$, then
\[(a_i^u,a_j^v)=
\begin{cases} 2^{d(B)-k+1}&\text{if }u\in\Z(D)\text{ and }j\widetilde{\gamma}-i\equiv 0\pmod{2^k}\\
-2^{d(B)-k+1}&\text{if }u\in\Z(D)\text{ and }j\widetilde{\gamma}-i\equiv 2^{k-1}\pmod{2^k}\\
2^{d(B)-k}&\text{if }u\notin\Z(D)\text{ and }j\widetilde{\gamma}-i\equiv 0\pmod{2^k}\\
-2^{d(B)-k}&\text{if }u\notin\Z(D)\text{ and }j\widetilde{\gamma}-i\equiv 2^{k-1}\pmod{2^k}\\
0&\text{otherwise}\end{cases}.\]
Otherwise $(a_i^u,a_j^v)=0$. In particular $(a_i^u,a_j^v)=0$ if $k\ne l$.
\end{Lemma}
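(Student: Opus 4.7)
The strategy is to unwind equation \eqref{aiuspur} for both $a_i^u$ and $a_j^v$ and then apply the standard second orthogonality relation for generalized decomposition numbers to the resulting double Galois sum. The crucial simplification, coming from Lemma~\ref{lokallb}, is that $l(b_u)=l(b_v)=1$ because $u,v\notin\langle c\rangle$. This reduces the Cartan matrix attached to each $b_a$ to a $1\times 1$ matrix whose single entry is the order of the defect group of $b_a$. Hence $\sum_\chi d^{u^{\widetilde\gamma_1}}_{\chi\phi}\overline{d^{v^{\widetilde\gamma_2}}_{\chi\phi}}$ vanishes unless the subsections ${}^{\gamma_1}(u,b_u)$ and ${}^{\gamma_2}(v,b_v)$ are $G$-conjugate, in which case it equals $|D|=2^{d(B)}$ if $u\in\Z(D)$ and $|\langle\Z(D),u\rangle|=2^{d(B)-1}$ otherwise, by the defect-group analysis already carried out in Lemma~\ref{lokallb}.

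Plugging \eqref{aiuspur} into the inner product and using that $a_j^v$ is real to convert its $\zeta^{-\widetilde\gamma_2 j}$-factor into $\zeta^{\widetilde\gamma_2 j}$ with a conjugated $d$, one gets
\[
(a_i^u, a_j^v) = 2^{2-2a}\sum_{\gamma_1,\gamma_2\in\mathcal{G}}\Bigl(\sum_\chi d^{u^{\widetilde\gamma_1}}_{\chi\phi}\,\overline{d^{v^{\widetilde\gamma_2}}_{\chi\phi}}\Bigr)\zeta_{2^k}^{-\widetilde\gamma_1 i}\,\zeta_{2^l}^{\widetilde\gamma_2 j}.
\]
If no pair $(\gamma_1,\gamma_2)$ yields conjugate subsections, the sum is zero; otherwise fix $\gamma_0\in\mathcal{G}$ and $g_0\in G$ with ${}^{g_0}(u,b_u)={}^{\gamma_0}(v,b_v)$, which forces $k=l$. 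The remark preceding the lemma states that the $2^{k-1}$ Galois translates ${}^\gamma(v,b_v)$ are pairwise $G$-nonconjugate, so the stabilizer $\mathcal{G}_v=\{\gamma : {}^\gamma(v,b_v)\sim_G(v,b_v)\}$ has order $2^{a-k}$, and for each $\gamma_1$ the surviving $\gamma_2$ form the single coset $\gamma_0\gamma_1\mathcal{G}_v$. All $2^{a-k}$ terms in this coset contribute the same phase $\zeta_{2^k}^{\widetilde\gamma_0\widetilde\gamma_1 j}$ and the same Cartan value.

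Collecting these constants reduces the whole expression to a single character sum
\[
(a_i^u, a_j^v) = 2^{c+2-a-k}\sum_{\gamma_1\in\mathcal{G}}\zeta_{2^k}^{\widetilde\gamma_1(\widetilde\gamma_0 j - i)},
\]
where $c=d(B)$ if $u\in\Z(D)$ and $c=d(B)-1$ otherwise. Because restriction gives a surjection $\mathcal{G}\to(\mathbb{Z}/2^k\mathbb{Z})^\times$ with kernel of size $2^{a-k}$, one may split the remaining sum into even and odd exponent contributions; a short computation shows that it equals $\pm 2^{a-1}$ when $\widetilde\gamma_0 j - i$ is congruent to $0$ or $2^{k-1}$ modulo $2^k$ (with sign $+$ or $-$ respectively) and vanishes otherwise. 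Multiplying out the powers of $2$ produces the stated values $\pm 2^{d(B)-k+1}$ and $\pm 2^{d(B)-k}$.

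The main obstacle is the careful bookkeeping of two-power factors and the correct identification of the stabilizer $\mathcal{G}_v$, both of which must combine with the $2^{2-2a}$ prefactor from \eqref{aiuspur} and the defect-group orders to yield exactly the exponents in the statement. Apart from this the proof is routine Fourier analysis on the cyclic $2$-group $\mathcal{G}/\mathcal{G}_v$, mirroring Olsson's Lemma~3.9 in \cite{Olsson}.
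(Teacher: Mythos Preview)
Your argument is correct and is exactly the adaptation of Olsson's Lemma~3.9 that the paper invokes: expand $a_i^u$ and $a_j^v$ via \eqref{aiuspur}, apply the second orthogonality relations together with $l(b_u)=1$ to replace the inner sum by the order of the defect group of $b_u$, and evaluate the resulting Ramanujan-type sum over $(\mathbb{Z}/2^k\mathbb{Z})^\times$. One cosmetic slip: the quotient $\mathcal{G}/\mathcal{G}_v\cong(\mathbb{Z}/2^k\mathbb{Z})^\times$ is not cyclic for $k\ge 3$, but your computation never uses cyclicity, only the explicit evaluation of $\sum_{t\text{ odd}}\zeta_{2^k}^{tm}$, so the argument goes through unchanged.
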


Using the theory of contributions we can also carry over Lemma~(6.E) in \cite{Kuelshammerwr}:

\begin{Lemma}\label{neukirchapp222}
Let $u\in\Z(D)$ with $l(b_u)=1$. If $u$ has order $2^k$, then for every $\chi\in\Irr(B)$ holds:
\begin{enumerate}[(i)]
\item $2^{h(\chi)}\mid a_i^u(\chi)$ for $i=0,\ldots,2^{k-1}-1$,
\item $\sum\limits_{i=0}^{2^{k-1}-1}{a_i^u(\chi)}\equiv 2^{h(\chi)}\pmod{2^{h(\chi)+1}}$.
\end{enumerate}
\end{Lemma}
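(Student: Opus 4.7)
The plan is to reduce both statements to $2$-adic properties of the single algebraic integer $d:=d^u_{\chi\phi_u}\in\mathbb{Z}[\zeta]$. Since $u\in\Z(D)$, the inclusion $(u,b_u)\le(D,b_D)$ forces $b_u$ to have defect group $D$, so $d(b_u)=d(B)$. Combined with $l(b_u)=1$, i.e.\ $\IBr(b_u)=\{\phi_u\}$, this yields $\chi(us)=d\,\phi_u(s)$ for every $2$-regular $s\in\C_G(u)$; in particular $\chi(u)=d\,\phi_u(1)$. Both parts of the lemma therefore become claims about the coefficients $a_i^u(\chi)$ of $d=\sum_{i=0}^{2^{k-1}-1}a_i^u(\chi)\zeta^i$.

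For (i) I factor
\[d=\omega_\chi(K_u)\cdot\frac{\chi(1)\,|\C_G(u)|}{|G|\,\phi_u(1)},\]
where $K_u$ is the class sum of $u$ and $\omega_\chi(K_u)=\chi(u)[G:\C_G(u)]/\chi(1)$ is the usual central character value, an algebraic integer in $\mathbb{Z}[\zeta]$. The standard formulas $\nu_2(\chi(1))=\nu_2(|G|)-d(B)+h(\chi)$ and $\nu_2(\phi_u(1))=\nu_2(|\C_G(u)|)-d(B)$ show that the rational factor has $2$-adic valuation exactly $h(\chi)$, hence equals $2^{h(\chi)}$ times an odd rational. Thus $d/2^{h(\chi)}\in\mathbb{Z}_{(2)}[\zeta]$; since this element lies in $\mathbb{Q}[\zeta]$ and is trivially integral at primes away from $2$, it lies in $\mathbb{Z}[\zeta]$. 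Expanding in the $\mathbb{Z}$-basis $\{\zeta^i:0\le i<2^{k-1}\}$ of $\mathbb{Z}[\zeta]$ then gives (i).

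For (ii) I reduce modulo the unique prime $\mathfrak{p}:=(1-\zeta)$ above $2$ in $\mathbb{Z}[\zeta]$; recall that $2$ is totally ramified in $\mathbb{Z}[\zeta_{2^k}]$ and $\mathbb{Z}[\zeta]/\mathfrak{p}\cong\mathbb{F}_2$. Since $\zeta\equiv 1\pmod\mathfrak{p}$,
\[\frac{d}{2^{h(\chi)}}\equiv\sum_{i=0}^{2^{k-1}-1}\frac{a_i^u(\chi)}{2^{h(\chi)}}\pmod{\mathfrak{p}}.\]
On the other hand, the classical Brauer congruence $\omega_\chi(K_g)\equiv\omega_\chi(K_{g_{2'}})\pmod{\mathfrak{p}}$ applied to the $2$-element $u$ yields $\omega_\chi(K_u)\equiv\omega_\chi(K_1)=1\pmod\mathfrak{p}$, while the odd rational factor from (i) also reduces to $1$ in $\mathbb{F}_2$. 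Hence $d/2^{h(\chi)}\equiv 1\pmod\mathfrak{p}$, so $\sum_i a_i^u(\chi)/2^{h(\chi)}$ is odd, which is (ii).

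The only real obstacle is the $2$-adic bookkeeping; the ingredients (integrality of central characters, Brauer's congruence for $p$-elements, total ramification of $2$ in cyclotomic fields) are all classical, and the argument is essentially a transcription of Lemma~(6.E) of \cite{Kuelshammerwr} to the present setting.
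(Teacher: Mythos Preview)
Your argument is correct, and since the paper gives no proof beyond the remark that the statement ``can be carried over'' from K\"ulshammer's Lemma~(6.E) ``using the theory of contributions'', you have in fact supplied more detail than the paper does. A contribution-based argument would instead use that $l(b_u)=1$ gives $|D|\,m^{(u,b_u)}_{\chi\psi}=d^u_{\chi\phi_u}\,\overline{d^u_{\psi\phi_u}}$, and then invoke Brauer's (5G) and (5H) in \cite{BrauerBlSec2} for the major subsection $(u,b_u)$ to read off $\nu\bigl(d^u_{\chi\phi_u}\bigr)=h(\chi)$ directly; from there parts (i) and (ii) follow by the same total-ramification trick you use. Your route via the factorisation $d=\omega_\chi(K_u)\cdot(\text{rational of }2\text{-valuation }h(\chi))$ together with the Brauer congruence $\omega_\chi(K_u)\equiv 1$ is a legitimate alternative that avoids the contribution formalism and is arguably more transparent.

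One point that deserves a sentence of justification rather than being labelled a ``standard formula'': the equality $\nu_2(\phi_u(1))=\nu_2(|\C_G(u)|)-d(B)$, i.e.\ that the unique Brauer character of $b_u$ has height~$0$. This is where the hypothesis $l(b_u)=1$ really enters. It holds because for any height-zero $\chi_0\in\Irr(b_u)$ one has $\chi_0(1)=d_{\chi_0\phi_u}\,\phi_u(1)$ with $d_{\chi_0\phi_u}$ odd (by (5H) in \cite{BrauerBlSec2} applied to the major subsection $(1,b_u)$, or equivalently by Proposition~1 of \cite{BroueSanta}); hence $\phi_u(1)$ and $\chi_0(1)$ have the same $2$-part. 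Without this, the rational factor could have $2$-valuation strictly less than $h(\chi)$ and (i) would not follow.
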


By Lemma~1.1 in \cite{Robinson} we have
\begin{equation}\label{robinsonkbinequ}
k(B)\le\sum_{i=0}^{\infty}{2^{2i}k_i(B)}\le|D|.
\end{equation}
In particular Brauer's $k(B)$-conjecture holds. Olsson's conjecture 
\begin{equation}\label{k0inequ}
k_0(B)\le|D:D'|=2^{r+1}
\end{equation}
follows by Theorem~3.1 in \cite{Robinson}. Now we are able to calculate the numbers $k(B)$, $k_i(B)$ and $l(B)$.

\begin{Theorem}\label{s1klki}
We have
\[k(B)=5\cdot 2^{r-1}=|\Irr(D)|,\hspace{5mm}k_0(B)=2^{r+1}=|D:D'|,\hspace{5mm}k_1(B)=2^{r-1},\hspace{5mm}l(B)=2.\]
\end{Theorem}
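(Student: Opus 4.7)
The plan is to first compute $k(B)$ via the subsection decomposition $k(B) = \sum_{a \in \mathcal{T}} l(b_a)$, and then split $k(B)$ into the numbers $k_i(B)$ using Olsson's and Brauer's inequalities.

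By Lemma~\ref{lokallb}, $l(b_a) = 1$ for every $a \in \mathcal{T} \setminus \langle c \rangle$, contributing $|\mathcal{T}| - |\langle c \rangle| = 2^{r+1} - 2^{r-1} = 3 \cdot 2^{r-1}$ to the sum. For $u \in \langle c \rangle$ the automorphism $\alpha$ of order $3$ produced in the proof of Lemma~\ref{lokallb} satisfies $\alpha(c) = c$, so $\alpha \in \C_G(u)$ and realizes an $S_3$-automizer on $M$ inside $\C_G(u)$; in particular $b_u$ is non-nilpotent. Since a block with $l = 1$ is nilpotent (its $1 \times 1$ Cartan matrix has entry $|D|$), this yields $l(b_u) \geq 2$ for every $u \in \langle c \rangle$, and therefore
\[k(B) \geq 3 \cdot 2^{r-1} + 2 \cdot 2^{r-1} = 5 \cdot 2^{r-1}.\]

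For the matching upper bound, I rewrite Brauer's inequality~\eqref{robinsonkbinequ} using the identity $\sum_i 2^{2i} k_i(B) = 4 k(B) - 3 k_0(B) + \sum_{i \geq 2}(2^{2i} - 4) k_i(B)$ to obtain
\[4 k(B) - 3 k_0(B) \leq \sum_i 2^{2i} k_i(B) \leq |D| = 2^{r+2},\]
so $k_0(B) \geq (4 k(B) - |D|)/3$. Combined with Olsson's bound~\eqref{k0inequ}, $k_0(B) \leq 2^{r+1}$, this forces $k(B) \leq 5 \cdot 2^{r-1}$. Hence $k(B) = 5 \cdot 2^{r-1}$, equality in the lower bound forces $l(b_u) = 2$ for every $u \in \langle c \rangle$ (in particular $l(B) = l(b_1) = 2$), and substituting the exact values back into the chain yields $\sum_{i \geq 2}(2^{2i} - 4) k_i(B) = 0$; this forces $k_i(B) = 0$ for $i \geq 2$ and $k_1(B) = k(B) - k_0(B) = 2^{r-1}$.

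The main technical point is the lower bound $l(b_u) \geq 2$ for non-nilpotent $b_u$, which rests on the characterization of nilpotent blocks by $l(B) = 1$. Everything else is elementary arithmetic with Olsson's and Brauer's inequalities, together with the observation that $\alpha$ centralizes every element of $\langle c \rangle$ because it centralizes $c$.
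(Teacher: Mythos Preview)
Your upper-bound argument via Robinson's inequality and Olsson's bound is clean and correct, and the arithmetic deducing $k_0(B)$, $k_1(B)$, and the vanishing of the higher $k_i(B)$ once $k(B)=5\cdot 2^{r-1}$ is established is fine. The gap is in the lower bound, specifically the step ``a block with $l=1$ is nilpotent.'' This implication is a well-known \emph{open problem}; the converse --- that nilpotent blocks have a unique simple module --- is the Brou\'e--Puig structure theorem, but the direction you invoke is not available in general. Your parenthetical justification, that the $1\times 1$ Cartan matrix has entry $|D|$, is a true consequence of $l=1$ but says nothing about nilpotence. So from the non-nilpotence of $b_u$ alone you cannot conclude $l(b_u)\ge 2$, and the inequality $k(B)\ge 5\cdot 2^{r-1}$ is unproved.

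The paper avoids this problem by an induction on $r$. In the base case $r=2$ one first checks that $c\ne z$ (a short argument with the action of $\alpha$ on $M/\langle z\rangle$), so that $D/\langle c\rangle\cong D_8$; then Brauer's explicit analysis of $2$-blocks with defect group $D_8$ applied to the dominated block yields $l(b_c)=2$ directly. For $r\ge 3$ and $1\ne a\in\langle c\rangle$ of order $2^k$, the block of $R[\C_G(a)/\langle a\rangle]$ dominated by $b_a$ has defect group of the same shape with $r$ replaced by $r-k$, so the induction hypothesis (or the $D_8$ case when $k=r-1$) gives $l(b_a)=2$. This determines $k(B)-l(B)=5\cdot 2^{r-1}-2$ exactly, after which the same inequalities you used finish the job. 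The point is that the paper never appeals to the conjectural implication ``$l=1\Rightarrow$ nilpotent''; it computes $l(b_a)$ by reducing to a situation (dihedral defect) where the answer is already known.
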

\begin{proof}
We argue by induction on $r$. Let $r=2$, and let $c\in\Z(D)$ as in Lemma~\ref{lokallb}. By way of contradiction we assume $c=z$. If $\alpha$ and $M$ are defined as in the proof of Lemma~\ref{lokallb}, then $\alpha$ acts nontrivially on $M/\langle z\rangle\cong C_2^2$. On the other hand $x$ acts trivially on $M/\langle z\rangle$. This contradicts $x\alpha x^{-1}\alpha\in\C_G(M)$.

This shows $c\in\{x^2,x^2z\}$ and $D/\langle c\rangle\cong D_8$. Thus, we can apply Theorem~2 in \cite{Brauer}. For this let
\[M_1:=\begin{cases}\langle x,z\rangle&\text{if }c=x^2\\\langle xy,z\rangle&\text{if }c=x^2z\end{cases}.\]
Then $M\ne M_1\cong C_4\times C_2$ and $\overline{M}:=M/\langle c\rangle\cong C_2^2\cong M_1/\langle c\rangle=:\overline{M_1}$. Let $\beta$ be the block of $R\overline{\C_G(c)}:=R[\C_G(c)/\langle c\rangle]$ which is dominated by $b_c$.
By Theorem~1.5 in \cite{Olsson} we have 
\[3\ \mbox{\Large{$\mid$}}\ |\inertial_{\N_{\overline{\C_G(c)}}(\overline{M})}(\beta_{\overline{M}})/\C_{\overline{\C_G(c)}}(\overline{M})|\]
and
\[3\ \mbox{\Large{$\nmid$}}\ |\inertial_{\N_{\overline{\C_G(c)}}(\overline{M_1})}(\beta_{\overline{M_1}})/\C_{\overline{\C_G(c)}}(\overline{M_1})|,\]
where $(\overline{M},\beta_{\overline{M}})$ and $(\overline{M_1},\beta_{\overline{M_1}})$ are $\beta$-subpairs.
This shows that case $(ab)$ in Theorem~2 in \cite{Brauer} occurs.
Hence, $l(b_c)=l(\beta)=2$. Now Lemma~\ref{lokallb} yields
\[k(B)\ge 1+k(B)-l(B)=9.\]
It is well known that $k_0(B)$ is divisible by $4$. Thus, the equations~\eqref{robinsonkbinequ} and \eqref{k0inequ} imply $k_0(B)=8$. Moreover,
\[d_{\chi\phi_z}^z=a_0^z(\chi)=\pm1\]
holds for every $\chi\in\Irr(B)$ with $h(\chi)=0$.
This shows $4k_1(B)\le |D|-k_0(B)=8$. It follows that $k_1(B)=l(B)=2$.

Now we consider the case $r\ge 3$. Since $z$ is not a square in $D$, we have $z\notin\langle c\rangle$. Let $a\in\langle c\rangle$ such that $|\langle a\rangle|=2^k$. If $k=r-1$, then $l(b_a)=2$ as before. Now let $k<r-1$. Then $D/\langle a\rangle$ has the same isomorphism type as $D$, but one has to replace $r$ by $r-k$. By induction we get $l(b_a)=2$ for $k\ge 1$. This shows
\[k(B)\ge 1+k(B)-l(B)=2^{r+1}+2^{r-1}-1.\]
Equation~\eqref{robinsonkbinequ} yields
\begin{align*}
2^{r+2}-4&=2^{r+1}+4(2^{r-1}-1)\le k_0(B)+4(k(B)-k_0(B))\\[1mm]
&\le\sum_{i=0}^{\infty}{2^{2i}k_i(B)}\le|D|=2^{r+2}.
\end{align*}
Now the conclusion follows easily.
\end{proof}

As a consequence, Brauer's height zero conjecture and the Alperin-McKay-conjecture hold for $B$. 

\subsection{Generalized decomposition numbers}
Now we will determine some of the generalized decomposition numbers. 
Again let $c\in\Z(D)$ as in Lemma~\ref{lokallb}, and let $u\in\Z(D)\setminus\langle c\rangle$ with $|\langle u\rangle|=2^k$. Then $(a_i^u,a_i^u)=2^{r+3-k}$ and $2\mid a_i^u(\chi)$ for $h(\chi)=1$ and $i=0,\ldots,2^{k-1}-1$. This gives
\[|\{\chi\in\Irr(B):a_i^u(\chi)\ne 0\}|\le 2^{r+3-k}-3|\{\chi\in\Irr(B):h(\chi)=1,\ a_i^u(\chi)\ne 0\}|.\]
Moreover, for every character $\chi\in\Irr(B)$ there exists $i\in\{0,\ldots,2^{k-1}-1\}$ such that $a_i^u(\chi)\ne 0$. Hence,
\begin{align*}
k(B)&\le\sum_{i=0}^{2^{k-1}-1}{\sum_{\substack{\chi\in\Irr(B),\\a_i^u(\chi)\ne 0}}{1}}\le\sum_{i=0}^{2^{k-1}-1}{\Biggl(2^{r+3-k}-3\sum_{\substack{\chi\in\Irr(B),\\h(\chi)=1,\\a_i^u(\chi)\ne 0}}{1}\Biggr)}=|D|-3\sum_{i=0}^{2^{k-1}-1}{\sum_{\substack{\chi\in\Irr(B),\\h(\chi)=1,\\a_i^u(\chi)\ne 0}}{1}}\\
&\le |D|-3k_1(B)=k(B).
\end{align*}
This shows that for every $\chi\in\Irr(B)$ there exists $i(\chi)\in\{0,\ldots,2^{k-1}-1\}$ such that
\[d_{\chi\phi_u}^u=\begin{cases}\pm\zeta_{2^k}^{i(\chi)}&\text{if }h(\chi)=0\\\pm2\zeta_{2^k}^{i(\chi)}&\text{if }h(\chi)=1\end{cases}.\]
In particular
\[d_{\chi\phi_u}^u=a_0^u(\chi)=\begin{cases}\pm1&\text{if }h(\chi)=0\\\pm2&\text{if }h(\chi)=1\end{cases}\]
for $k=1$.

By Lemma~\ref{orthogonal} we have $(a_i^u,a_i^u)=4$ for $u\in\mathcal{T}\setminus\Z(D)$ and $i=0,\ldots,2^{r-1}-1$. If $a_i^u$ has only one nonvanishing entry, then $a_i^u$ would not be orthogonal to $a_0^z$. Hence, $a_i^u$ has up to ordering the form
\[(\pm1,\pm1,\pm1,\pm1,0,\ldots,0)^{\text{T}},\]
where the signs are independent of each other. The proof of Theorem~3.1 in \cite{Robinson} gives
\[|d_{\chi\phi_u}^u|=1\]
for $u\in\mathcal{T}\setminus\Z(D)$ and $\chi\in\Irr(B)$ with $h(\chi)=0$.
In particular $d_{\chi\phi_u}^u=0$ for characters $\chi\in\Irr(B)$ of height $1$. By suitable ordering we get
\[a_i^u(\chi_j)=\begin{cases}\pm1&\text{if }j-4i\in\{1,\ldots,4\}\\0&\text{otherwise}\end{cases}\text{ and }d_{\chi_j\phi_u}^u=\begin{cases}\pm\zeta_{2^r}^{[\frac{j-1}{4}]}&\text{if }1\le j\le k_0(B)\\0&\text{if }k_0(B)<j\le k(B)\end{cases}\]
for $i=0,\ldots,2^{r-1}-1$, where $\chi_1,\ldots,\chi_{k_0(B)}$ are the characters of height $0$.

Now let $\IBr(b_c):=\{\phi_1,\phi_2\}$. We determine the numbers $d_{\chi\phi_1}^c,d_{\chi\phi_2}^c\in\mathbb{Z}[\zeta_{2^{r-1}}]$. By (4C) in \cite{BrauerBlSec2} we have $d_{\chi\phi_1}^c\ne 0$ or $d_{\chi\phi_2}^c\ne 0$ for all $\chi\in\Irr(B)$. As in the proof of Theorem~\ref{s1klki}, $b_c$ dominates a block $\overline{b_c}\in\Bl(R[\C_G(c)/\langle c\rangle])$ with defect group $D_8$. The table at the end of \cite{Erdmann} shows that the Cartan matrix of $\overline{b_c}$ has the form
\[\begin{pmatrix}8&4\\4&3\end{pmatrix}\text{ or }\begin{pmatrix}4&2\\2&3\end{pmatrix}.\]
We label these possibilities as the “first” and the “second” case. The Cartan matrix of $b_c$ is 
\[2^{r-1}\begin{pmatrix}8&4\\4&3\end{pmatrix}\text{ or }2^{r-1}\begin{pmatrix}4&2\\2&3\end{pmatrix}\]
respectively.
The inverses of these matrices are
\[2^{-r-2}\begin{pmatrix}3&-4\\-4&8\end{pmatrix}\text{ and }2^{-r-2}\begin{pmatrix}3&-2\\-2&4\end{pmatrix}.\]

Let $m_{\chi\psi}^{(c,b_c)}$ be the contribution of $\chi,\psi\in\Irr(B)$ with respect to the subsection $(c,b_c)$ (see \cite{BrauerBlSec2}). Then we have
\begin{align}\label{ersterfall}
|D|m_{\chi\psi}^{(c,b_c)}&=3d_{\chi\phi_1}^c\overline{d_{\psi\phi_1}^c}-4(d_{\chi\phi_1}^c\overline{d_{\psi\phi_2}^c}+d_{\chi\phi_2}^c\overline{d_{\psi\phi_1}^c})+8d_{\chi\phi_2}^c\overline{d_{\psi\phi_2}^c}\nonumber\\
&\text{or}\nonumber\\
|D|m_{\chi\psi}^{(c,b_c)}&=3d_{\chi\phi_1}^c\overline{d_{\psi\phi_1}^c}-2(d_{\chi\phi_1}^c\overline{d_{\psi\phi_2}^c}+d_{\chi\phi_2}^c\overline{d_{\psi\phi_1}^c})+4d_{\chi\phi_2}^c\overline{d_{\psi\phi_2}^c}
\end{align}
respectively.
For a character $\chi\in\Irr(B)$ with height $0$ we get
\[0=h(\chi)=\nu\bigl(|D|m_{\chi\chi}^{(c,b_c)}\bigr)=\nu(3d_{\chi\phi_1}^c\overline{d_{\chi\phi_1}^c})=\nu(d_{\chi\phi_1}^c)\]
by (5H) in \cite{BrauerBlSec2}. In particular $d_{\chi\phi_1}^c\ne 0$. We define $c_i^j\in\mathbb{Z}^{k(B)}$ by
\[d_{\chi\phi_j}^c=\sum_{i=0}^{2^{r-2}-1}{c_i^j(\chi)\zeta_{2^{r-1}}^i}\]
for $j=1,2$. Then
\[(c_i^1,c_j^1)=\begin{cases}\delta_{ij}16&\text{first case}\\\delta_{ij}8&\text{second case}\end{cases},\ (c_i^1,c_j^2)=\begin{cases}\delta_{ij}8&\text{first case}\\\delta_{ij}4&\text{second case}\end{cases},\ (c_i^2,c_j^2)=\delta_{ij}6\]
as in Lemma~\ref{orthogonal}. (Since the $2^{r-2}$ $B$-subsections of the form $^\gamma(c,b_c)$ for $\gamma\in\mathcal{G}$ are pairwise nonconjugate, one can argue like in Lemma~\ref{orthogonal}.) Hence, in the second case 
\[d_{\chi_i\phi_1}^c=\begin{cases}\pm\zeta_{2^{r-1}}^{[\frac{i-1}{8}]}&\text{if }1\le i\le k_0(B)\\0&\text{if }k_0(B)<i\le k(B)\end{cases}\text{ (second case)}\]
holds for a suitable arrangement. Again $\chi_1,\ldots,\chi_{k_0(B)}$ are the characters of height $0$.
In the first case 
\[1=h(\psi)=\nu\bigl(|D|m_{\chi\psi}^{(c,b_c)}\bigr)=\nu(3d_{\chi\phi_1}^c\overline{d_{\psi\phi_1}^c})=\nu(d_{\psi\phi_1}^c)\]
by (5G) in \cite{BrauerBlSec2} for $h(\psi)=1$ and $h(\chi)=0$. As in Lemma~\ref{neukirchapp222} we also have $2\mid c_i^1(\psi)$ for $h(\psi)=1$ and $i=0,\ldots,2^{r-2}-1$. Analogously as in the case $u\in\Z(D)\setminus\langle c\rangle$ we conclude
\begin{equation}\label{zwfall}
d_{\chi\phi_1}^c=\begin{cases}\pm\zeta_{2^{r-1}}^{i(\chi)}&\text{if }h(\chi)=0\\\pm2\zeta_{2^{r-1}}^{i(\chi)}&\text{if }h(\chi)=1\end{cases}\text{ (first case)}
\end{equation}
for suitable indices $i(\chi)\in\{0,\ldots,2^{r-2}-1\}$. Since $(c_i^2,c_j^2)=\delta_{ij}6$, in both cases $c_i^2$ has the form
\[(\pm1,\pm1,\pm1,\pm1,\pm1,\pm1,0,\ldots,0)^{\text{T}}\text{ or }(\pm2,\pm1,\pm1,0,\ldots,0)^{\text{T}}.\]
We show that the latter possibility does not occur.
In the second case for every character $\chi\in\Irr(B)$ with height $1$ there exists $i\in\{0,\ldots,2^{r-2}-1\}$ such that $c_i^2(\chi)\ne 0$. In this case we get 
\[d_{\chi_i\phi_2}^c=\begin{cases}\pm\zeta_{2^{r-1}}^{[\frac{i-1}{4}]}&\text{if }1\le i\le 2^r\\0&\text{if }2^r<i\le k_0(B)\\\pm\zeta_{2^{r-1}}^{[\frac{i-k_0(B)-1}{2}]}&\text{if }k_0(B)<i\le k(B)\end{cases}\text{ (second case)},\]
where $\chi_1,\ldots,\chi_{k_0(B)}$ are again the characters of height $0$. Now let us consider the first case. Since $(c_i^1,c_j^2)=\delta_{ij}8$, the value $\pm2$ must occur in every column $c_i^1$ for $i=0,\ldots,2^{r-2}-1$ at least twice. Obviously exactly two entries have to be $\pm2$. Thus, one can improve equation~\eqref{zwfall} to
\[d_{\chi_i\phi_1}^c=\begin{cases}\pm\zeta_{2^{r-1}}^{[\frac{i-1}{8}]}&\text{if }1\le i\le k_0(B)\\\pm2\zeta_{2^{r-1}}^{[\frac{i-k_0(B)-1}{2}]}&\text{if }k_0(B)<i\le k(B)\end{cases}\text{ (first case)}.\]
It follows
\[d_{\chi_i\phi_2}^c=\begin{cases}\pm\zeta_{2^{r-1}}^{[\frac{i-1}{4}]}&\text{if }1\le i\le 2^r\\0&\text{if }2^r<i\le k_0(B)\\\pm\zeta_{2^{r-1}}^{[\frac{i-k_0(B)-1}{2}]}&\text{if }k_0(B)<i\le k(B)\end{cases}\text{ (first case)}.\]
Hence, the numbers $d^c_{\chi\phi_2}$ are independent of the case.
Of course, one gets similar results for $d_{\chi\phi_i}^u$ with $\langle u\rangle=\langle c\rangle$. 

\subsection{The Cartan matrix}
Now we investigate the Cartan matrix of $B$.
\begin{Lemma}\label{eledivs}
The elementary divisors of the Cartan matrix of $B$ are $2^{r-1}$ and $|D|$.
\end{Lemma}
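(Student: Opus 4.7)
Since $l(B)=2$ by Theorem~\ref{s1klki}, $C_B$ is a $2\times 2$ symmetric positive definite integer matrix. The plan is first to invoke the classical theorem of Brauer asserting that the largest elementary divisor of the Cartan matrix of a block equals the order of its defect group; here this forces the larger elementary divisor of $C_B$ to be $|D|=2^{r+2}$. The statement thus reduces to the single identity $\det C_B=2^{r-1}\cdot|D|=2^{2r+1}$.

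To compute $\det C_B$, I would set up a lattice argument using the generalized decomposition data developed in the previous subsection. Put $L:=\mathbb{Z}^{k(B)}$ with the standard inner product, let $M\subseteq L$ be the sublattice spanned by the columns of the ordinary decomposition matrix $D^1$ (whose Gram matrix is $C_B$), and let $N\subseteq L$ be the sublattice spanned by the integer vectors $a_i^{u,\phi}$ obtained from the Galois expansion of the generalized decomposition numbers for all subsections $(u,b_u)$ with $u\neq 1$. The orthogonality recorded in Lemma~\ref{orthogonal} places $M$ and $N$ in orthogonal $\mathbb{Q}$-subspaces of $L_\mathbb{Q}$, and since $L$ is unimodular we obtain
\[\det C_B\cdot\det(\operatorname{Gram}(N))=[L:M\oplus N]^2.\]

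The Gram matrix of $N$ is block-diagonal with one block per $G$-conjugacy class of subsections $(u,b_u)$ with $u\neq 1$. For $u\notin\langle c\rangle$ the block $b_u$ is nilpotent with $l(b_u)=1$, and Lemma~\ref{orthogonal} gives the corresponding factor as a product of the explicit scalar norms $2^{d(B)-k+\epsilon(u)}$. For $u\in\langle c\rangle\setminus\{1\}$ the factor is $\det C_{b_u}$, computable by induction on $r$: the dominated block of $b_u$ has defect group $D/\langle u\rangle$, isomorphic to $D_8$ when $|\langle u\rangle|=2^{r-1}$ (handled via Erdmann's tables, giving determinant $8$) and a minimal nonabelian group from the same family with smaller parameter otherwise. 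The uniform outcome $\det C_{b_u}=|\langle u\rangle|^2\cdot 2^{2(r-\log_2|\langle u\rangle|)+1}=2^{2r+1}$ then feeds into the identity above. The main obstacle is the precise determination of the index $[L:M\oplus N]$, which demands careful bookkeeping of the Galois orbits on the set of $B$-subsections; once in hand, the identity immediately delivers $\det C_B=2^{2r+1}$ and hence the elementary divisors $2^{r-1}$ and $|D|$.
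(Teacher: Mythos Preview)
Your lattice identity $\det C_B\cdot\det(\operatorname{Gram}(N))=[L:M\oplus N]^2$ is just the general fact that a full-rank sublattice $\Lambda$ of a unimodular lattice satisfies $\det\Lambda=[L:\Lambda]^2$; applied to $\Lambda=M\oplus N$ it is a tautology and contains no information about $\det C_B$ that is not already hidden in the index. The belief that $[L:M\oplus N]$ can be extracted from ``bookkeeping of the Galois orbits'' is where the argument breaks. Write $A$ for the square integer matrix whose columns are all the vectors $a_i^{u,\phi}$, including the two ordinary decomposition columns at $u=1$; then $[L:M\oplus N]=|\det A|$, and this number depends on the ordinary decomposition matrix of $B$, which is not available before the present lemma (in the paper it is only determined \emph{after} the Cartan matrix has been pinned down). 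If you try to bypass this by passing to the complex generalized decomposition matrix $D_{\mathbb{C}}=AT$, with $T$ the block-diagonal Vandermonde transition matrix coming from the Galois expansion, you get $|\det A|^2\,|\det T|^2=|\det D_{\mathbb{C}}|^2=\prod_{u}\det C_{b_u}$; substituting back into your identity, the factor $\det C_B$ cancels from both sides and you are left with a relation among quantities for $u\neq 1$ only. So the scheme is circular.

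The paper takes an entirely different and much shorter route via Olsson's theory of lower defect groups. Since $l(B)=2$ and the top elementary divisor is always $|D|$, it suffices to show that $2^{r-1}$ occurs at least once. One picks a $B$-subsection $(u,b)$ with $|\langle u\rangle|=2^{r-1}$ and $l(b)=2$, passes to $b_1:=b^{\N_G(\langle u\rangle)}$, and by induction on $r$ (the base case reducing to Brauer's analysis of $D_8$ via the dominated block) shows that the Cartan matrix of $b_1$ has elementary divisors $2^{r-1}$ and $|D|$. Theorem~7.2 of \cite{OlssonLDG} then transfers the lower-defect-group multiplicity from $b_1$ to $B$, forcing $2^{r-1}$ to appear among the elementary divisors of $C_B$.
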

\begin{proof}
Let $C$ be the Cartan matrix of $B$. Since $l(B)=2$, it suffices to show that $2^{r-1}$ occurs as elementary divisor of $C$ at least once. In order to proof this, we use the notion of lower defect groups (see \cite{OlssonLDG}). Let $(u,b)$ be a $B$-subsection with $|\langle u\rangle|=2^{r-1}$ and $l(b)=2$. Let $b_1:=b^{\N_G(\langle u\rangle)}$. Then $b_1$ has also defect group $D$, and $l(b_1)=2$ holds. Moreover, $u^{2^{r-2}}\in\Z(\N_G(\langle u\rangle))$. 
Let $\overline{b_1}\in\Bl(R[\N_G(u)/\langle u^{2^{r-2}}\rangle])$ be the block which is covered by $b_1$. Then $\overline{b_1}$ has defect group $D/\langle u^{2^{r-2}}\rangle$. We argue by induction on $r$. Thus, let $r=2$. Then $b=b_1$ and $D/\langle u^{2^{r-2}}\rangle=D/\langle u\rangle\cong D_8$. By Proposition~(5G) in \cite{Brauer} the Cartan matrix of $\overline{b}$ has the elementary divisors $1$ and $8$. Hence, $2=2^{r-1}$ and $16=|D|$ are the elementary divisors of the Cartan matrix of $b$. Hence, the claim follows from Theorem~7.2 in \cite{OlssonLDG}.

Now assume that the claim already holds for $r-1\ge 2$. By induction the elementary divisors of the Cartan matrix of $\overline{b_1}$ are $2^{r-2}$ and $|D|/2$. The claim follows easily as before.
\end{proof}

Now we are in a position to calculate the Cartan matrix $C$ up to equivalence of quadratic forms. Here we call two matrices $M_1,M_2\in\mathbb{Z}^{l\times l}$ equivalent if there exists a matrix $S\in\GL(l,\mathbb{Z})$ such that $A=SBS^{\text{T}}$, where $S^{\text{T}}$ denotes the transpose of $S$.

By Lemma~\ref{eledivs} all entries of $C$ are divisible by $2^{r-1}$. Thus, we can consider $\widetilde{C}:=2^{1-r}C\in\mathbb{Z}^{2\times 2}$. Then $\det\widetilde{C}=8$ and the elementary divisors of $\widetilde{C}$ are $1$ and $8$. If we write
\[\widetilde{C}=\begin{pmatrix}c_1&c_2\\c_2&c_3\end{pmatrix},\]
then $\widetilde{C}$ corresponds to the positive definite binary quadratic form $q(x_1,x_2):=c_1x_1^2+2c_2x_1x_2+c_3x_2^2$. 
Obviously $\gcd(c_1,c_2,c_3)=1$. If one reduces the entries of $\widetilde{C}$ modulo $2$, then one gets a matrix of rank $1$ (this is just the multiplicity of the elementary divisor $1$). This shows that $c_1$ or $c_3$ must be odd. Hence, $\gcd(c_1,2c_2,c_3)=1$, i.\,e. $q$ is primitive (see \cite{Buell} for example). Moreover, $\Delta:=-4\det\widetilde{C}=-32$ is the discriminant of $q$. 
Now it is easy to see that $q$ (and $\widetilde{C}$) is equivalent to exactly one of the following matrices (see page~20 in \cite{Buell}):
\[\begin{pmatrix}1&0\\0&8\end{pmatrix}\text{ or }\begin{pmatrix}3&1\\1&3\end{pmatrix}.\]

The Cartan matrices for the block $\overline{b_c}$ with defect group $D_8$ (used before) satisfy
\[\begin{pmatrix}1&-1\\0&1\end{pmatrix}\begin{pmatrix}8&4\\4&3\end{pmatrix}\begin{pmatrix}1&-1\\0&1\end{pmatrix}^{\text{T}}=\begin{pmatrix}0&1\\-1&1\end{pmatrix}\begin{pmatrix}4&2\\2&3\end{pmatrix}\begin{pmatrix}0&1\\-1&1\end{pmatrix}^{\text{T}}=\begin{pmatrix}3&1\\1&3\end{pmatrix}.\]
Hence, only the second matrix occurs up to equivalence. We show that this holds also for the block $B$. 

\begin{Theorem}
The Cartan matrix of $B$ is equivalent to
\[2^{r-1}\begin{pmatrix}3&1\\1&3\end{pmatrix}.\]
\end{Theorem}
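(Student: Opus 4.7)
The plan is to rule out the first of the two candidate equivalence classes for $\widetilde C$ and conclude $\widetilde C\sim\bigl(\begin{smallmatrix}3&1\\1&3\end{smallmatrix}\bigr)$. The two binary quadratic forms $x^2+8y^2$ and $3x^2+2xy+3y^2$ are distinguished by the integers they represent modulo $8$: the first takes values in $\{0,1,4\}\pmod 8$ and the second in $\{0,3,4\}\pmod 8$. In particular, only the first represents the integer $1$. Hence it suffices to show that $\widetilde C$ does not represent $1$, equivalently, that no integer linear combination $\phi=v_1\phi_1+v_2\phi_2$ of the two irreducible Brauer characters of $B$ satisfies $\sum_\chi d_{\chi\phi}^2=2^{r-1}$.

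I would argue by induction on $r$. For the base case $r=2$, the block $\overline{b_c}$ dominated by $b_c$ has defect group $D_8$; by Erdmann's classification its Cartan matrix has one of the two shapes displayed before the theorem, and both of them are $\mathbb Z$-equivalent to $\bigl(\begin{smallmatrix}3&1\\1&3\end{smallmatrix}\bigr)$ by the explicit transformations already exhibited. Thus $\widetilde C(\overline{b_c})$ lies in the desired class, and via $C(b_c)=2^{r-1}C(\overline{b_c})$ so does $\widetilde C(b_c)$. The equivalence class is transferred to $B$ by a compatibility argument: the orthogonal complement in $\mathbb Z^{k(B)}$ of the $\mathbb Z$-span of the nonidentity generalized decomposition columns $a_i^u$ and $c_i^j$ (all completely determined in the previous subsection, together with their norms and supports) is an ordinary decomposition lattice for $B$, and its Gram matrix must match $\widetilde C$ up to $\operatorname{GL}(2,\mathbb Z)$-equivalence. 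Norm-counting then forces the non-$1$-representing class.

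For the inductive step $r\ge 3$, I would pass to the block $\overline{b_1}$ introduced in the proof of Lemma~\ref{eledivs}, whose defect group $D/\langle u^{2^{r-2}}\rangle$ is again minimal nonabelian of the same type but with parameter $r-1$. The induction hypothesis gives $\widetilde C(\overline{b_1})\sim\bigl(\begin{smallmatrix}3&1\\1&3\end{smallmatrix}\bigr)$. Domination by a central subgroup of order $2$ gives $C(b_1)=2\,C(\overline{b_1})$, preserving the reduced equivalence class, and the lower defect group machinery from Theorem~7.2 of \cite{OlssonLDG} together with the coincidence of $k$, $l$ and generalized decomposition numbers at all nonidentity subsections for $B$ and $b_1$ forces $\widetilde C(B)$ into the same equivalence class as $\widetilde C(b_1)$.

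The main obstacle is this last transfer step: passing from the Brauer-correspondent block $b_1$ of $\N_G(\langle u\rangle)$ to the block $B$ of $G$, since in general only the elementary divisors of the Cartan matrix (rather than the full quadratic equivalence class) are shared between Brauer correspondents. In the present setup this is circumvented because every datum entering the orthogonal-complement computation above — the subsection columns, their norms, and the total rank — agrees for $B$ and $b_1$, so the Gram matrix equivalence class is pinned down on both sides simultaneously.
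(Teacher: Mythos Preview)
Your strategy — induction on $r$, distinguish the two genus classes by whether they represent $1$, and rule out the class of $x^2+8y^2$ — is exactly the paper's plan. But the argument as written has a genuine gap at the decisive step, and the inductive scaffolding you build around it is a detour that does not actually avoid the missing computation.

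The gap is the phrase ``norm-counting then forces the non-$1$-representing class''. You correctly observe that the ordinary decomposition lattice is the orthogonal complement of the span of the non-identity columns, and that its Gram matrix is $C$ up to $\GL(2,\mathbb{Z})$. But you never show that this lattice contains no vector of norm $2^{r-1}$. Knowing the norms and supports of the individual columns $a_i^u$, $c_i^j$ does not, by itself, give you the minimum of the orthogonal complement; that requires either an explicit basis or some global identity. The paper supplies the latter: assuming $C\sim 2^{r-1}\bigl(\begin{smallmatrix}1&0\\0&8\end{smallmatrix}\bigr)$, one changes basis so that $|D|m_{\chi\chi}^{(1,B)}=8\widetilde d_{\chi\phi_1}^2+\widetilde d_{\chi\phi_2}^2\equiv 1\pmod 4$ for $h(\chi)=0$, computes $|D|m_{\chi\chi}^{(u,b_u)}$ at every other $u\in\mathcal{T}$ (namely $1$ for $u\in\Z(D)\setminus\langle c\rangle$, $2$ for $u\notin\Z(D)$, and $\equiv 3\pmod 4$ for $u\in\langle c\rangle\setminus\{1\}$), and then Brauer's relation (5B), $|D|=\sum_{u\in\mathcal{T}}|D|m_{\chi\chi}^{(u,b_u)}$, yields the contradiction $0\equiv 2\pmod 4$. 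This is the concrete content you are missing.

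Your inductive route through $b_1=b^{\N_G(\langle u\rangle)}$ is also problematic. To claim that the orthogonal complements for $B$ and $b_1$ coincide you need the non-identity columns of $B$ to be pinned down; but the columns at $u\in\langle c\rangle\setminus\{1\}$ depend on the Cartan matrices of the subsection blocks $b_u$ of $\C_G(u)$, and these are known only after you apply the induction hypothesis to \emph{each} $\overline{b_u}$ (parameter $r-k$), exactly as the paper does. Once you have done that, the data for $B$ are already complete and the passage through the Brauer correspondent $b_1$ buys nothing; worse, your assertion that the full generalized decomposition data ``agree for $B$ and $b_1$'' is not something that follows from lower defect group theory and would itself need a separate argument. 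In short, the induction should feed into the contribution-sum contradiction for $B$ directly, not into a comparison with $b_1$.
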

\begin{proof}
We argue by induction on $r$. The smallest case was already considered by $b_c$ (this would correspond to $r=1$). Thus, we may assume $r\ge 2$ (as usual). First, we determine the generalized decomposition numbers $d^u_{\chi\phi}$ for $u\in\langle c\rangle\setminus\{1\}$ with $|\langle u\rangle|=2^k<2^{r-1}$. As in the proof of Theorem~\ref{s1klki}, the group $D/\langle u\rangle$ has the same isomorphism type as $D$, but one has to replace $r$ by $r-k$. Hence, by induction we may assume that $b_u$ has a Cartan matrix which is equivalent to the matrix given in the statement of the theorem. Let $C_u$ be the Cartan matrix of $b_u$, and let $S_u\in\GL(2,\mathbb{Z})$ such that
\[C_u=2^{r-1}S_u^{\text{T}}\begin{pmatrix}4&2\\2&3\end{pmatrix}S_u,\]
i.\,e. with the notations of the previous section, we assume that the “second case” occurs. (This is allowed, since we can only compute the generalized decomposition numbers up to multiplication with $S_u$ anyway.) As before we write $\IBr(b_u)=\{\phi_1,\phi_2\}$, $D_u:=(d^u_{\chi\phi_i})$ and $(\widetilde{d}^u_{\chi\phi_i}):=D_uS_u^{-1}$.
The consideration in the previous section carries over, and one gets
\[\widetilde{d}^u_{\chi\phi_1}=\begin{cases}\pm\zeta_{2^k}^{[\frac{i-1}{2^{r+2-k}}]}&\text{if }1\le i\le k_0(B)\\0&\text{if }k_0(B)<i\le k(B)\end{cases}\]
and
\[\widetilde{d}^u_{\chi\phi_2}=\begin{cases}\pm\zeta_{2^k}^{[\frac{i-1}{2^{r-k+1}}]}&\text{if }1\le i\le 2^r\\0&\text{if }2^r<i\le k_0(B)\\\pm\zeta_{2^k}^{[\frac{i-k_0(B)-1}{2^{r-k}}]}&\text{if }k_0(B)<i\le k(B)\end{cases},\]
where $\chi_1,\ldots,\chi_{k_0(B)}$ are the characters of height $0$. But notice that the ordering of those characters for $\phi_1$ and $\phi_2$ is different.

Now assume that there is a matrix $S\in\GL(2,\mathbb{Z})$ such that
\[C=2^{r-1}S^{\text{T}}\begin{pmatrix}1&0\\0&8\end{pmatrix}S.\]
If $Q$ denotes the decomposition matrix of $B$, we set $(\widetilde{d}_{\chi\phi_i}):=QS^{-1}$ for $\IBr(B)=\{\phi_1,\phi_2\}$. Then we have
\[|D|m_{\chi\psi}^{(1,B)}=8\widetilde{d}_{\chi\phi_1}\widetilde{d}_{\psi\phi_1}+\widetilde{d}_{\chi\phi_2}\widetilde{d}_{\psi\phi_2}\text{ for }\chi,\psi\in\Irr(B).\]
In particular $|D|m_{\chi\chi}^{(1,B)}\equiv 1\pmod{4}$ for a character $\chi\in\Irr(B)$ of height $0$. For $u\in\mathcal{T}\setminus\Z(D)$ we have $|D|m_{\chi\chi}^{(u,b_u)}=2$, and for $u\in\Z(D)\setminus\langle c\rangle$ we have $|D|m_{\chi\chi}^{(u,b_u)}=1$. Let $u\in\langle c\rangle\setminus\{1\}$. Equation~\eqref{ersterfall} and the considerations above imply $|D|m_{\chi\chi}^{(u,b_u)}\equiv 3\pmod{4}$. Now (5B) in \cite{BrauerBlSec2} reveals the contradiction
\[|D|=\sum_{u\in\mathcal{T}}{|D|m_{\chi\chi}^{(u,b_u)}}\equiv|D|m_{\chi\chi}^{(1,B)}+2^{r+1}+2^{r-1}+3\cdot(2^{r-1}-1)\equiv 2\pmod{4}.\qedhere\]
\end{proof}

With the proof of the last theorem we can also obtain the ordinary decomposition numbers (up to multiplication with an invertible matrix):
\begin{align*}
&d_{\chi\phi_1}=\begin{cases}\pm1&\text{if }h(\chi)=0\\0&\text{if }h(\chi)=1\end{cases},
&d_{\chi_i\phi_2}=\begin{cases}\pm1&\text{if }0\le i\le 2^r\\0&\text{if }2^r<i\le k_0(B)\\\pm1&\text{if }k_0(B)<i\le k(B)\end{cases}.
\end{align*}
Again $\chi_1,\ldots,\chi_{k_0(B)}$ are the characters of height $0$.

Since we know how $\mathcal{G}$ acts on the $B$-subsections, we can investigate the action of $\mathcal{G}$ on $\Irr(B)$.

\begin{Theorem}
The irreducible characters of height $0$ of $B$ split in $2(r+1)$ families of $2$-conjugate characters. These families have sizes $1,1,1,1,2,2,4,4,\ldots,2^{r-1},2^{r-1}$ respectively. The characters of height $1$ split in $r$ families with sizes $1,1,2,4,\ldots,2^{r-2}$ respectively. In particular there are exactly six $2$-rational characters in $\Irr(B)$. 
\end{Theorem}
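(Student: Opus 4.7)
The plan is to compute the $\mathcal{G}$-orbits on the set of $B$-subsection classes (parametrized by $\mathcal{T}$), transfer them to $\Irr(B)$ via the generalized decomposition matrix, and finally separate the $\mathcal{G}$-orbits on $\Irr(B)$ according to character height.

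First I would enumerate the $\mathcal{G}$-orbits on $\mathcal{T}$. Since $\mathcal{G}$ acts by $u\mapsto u^{\widetilde\gamma}$, the orbit of $u$ of order $2^k$ consists of the generators of $\langle u\rangle$ and has size $\varphi(2^k)$. In $\Z(D)\cong C_{2^{r-1}}\times C_2$ the cyclic subgroups are one of order $1$, three of order $2$, and two each of order $2^k$ for $2\le k\le r-1$, giving $2r$ orbits of sizes $1,1,1,1,2,2,4,4,\ldots,2^{r-2},2^{r-2}$. In $\mathcal{T}\setminus\Z(D)$ every element has order $2^r$; using $(x^iy)^\ell=z^{(\ell-1)/2}x^{i\ell}y$ and the fact that $D$-conjugation by $x$ toggles the $z$-factor, the orbit of $x^iy$ reduces in $\mathcal{T}$ to $\{x^jy : j \text{ odd}\}$ and similarly $x^i$ to $\{x^j : j \text{ odd}\}$. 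So two orbits of size $2^{r-1}$ appear, producing $2(r+1)$ orbits on $\mathcal{T}$ in total.

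Next, I would transfer to $\Irr(B)$. Each $\phi_u\in\IBr(b_u)$ takes values in $\mathbb{Q}(\zeta_m)$ and is therefore $\mathcal{G}$-fixed; together with the identities $d^{u^{\widetilde\gamma}}_{\chi,\phi_u}=\gamma(d^u_{\chi,\phi_u})=d^u_{\chi^\gamma,\phi_u}$ this shows that the invertible generalized decomposition matrix $Q$ conjugates the $\mathcal{G}$-permutation representation on $\Irr(B)$ to the inverse of the $\mathcal{G}$-permutation representation on pairs $(u,\phi_u)$. Since every permutation representation of $\mathcal{G}$ is self-dual (its rational character takes only integer values), these two actions have identical orbit decompositions. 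Using $l(b_u)=2$ precisely when $u\in\langle c\rangle$, each $\mathcal{T}$-orbit in $\langle c\rangle$ contributes two pair orbits of equal size while the others contribute one, yielding $2r$ pair orbits from $\langle c\rangle$, $r$ from $\Z(D)\setminus\langle c\rangle$, and $2$ from $\mathcal{T}\setminus\Z(D)$, a total of $3r+2$ orbits on $\Irr(B)$.

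Finally, I would split by height. The explicit generalized decomposition numbers computed in the previous subsection give $d^u_{\chi,\phi_u}=0$ whenever $u\in\mathcal{T}\setminus\Z(D)$ and $h(\chi)=1$. Since the sizes of the $r$ pair orbits arising from $\Z(D)\setminus\langle c\rangle$ sum to $1+1+2+\cdots+2^{r-2}=2^{r-1}=k_1(B)$, these must be precisely the height-$1$ families; the remaining $2(r+1)$ pair orbits (of sizes $1,1,1,1,2,2,4,4,\ldots,2^{r-1},2^{r-1}$) form the height-$0$ families, and the $4+2=6$ singleton orbits give the $2$-rational characters. The most delicate point in the plan is the $\mathcal{G}$-equivariance in Step 2: one must invoke self-duality of permutation representations to conclude that the row and column orbit structures of the semi-linear intertwiner $Q$ coincide. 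Once that is established, the height splitting in Step 3 is forced by counting together with the vanishing observation above.
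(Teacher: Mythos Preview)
Your Step~1 and the equivariance identities in Step~2 are correct and agree with the paper's setup. The trouble begins with the passage from ``the permutation representations are isomorphic'' to ``the orbit decompositions are identical'', and continues into Step~3's direct identification of column orbits with character orbits.

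Self-duality only tells you that the $\mathbb{C}[\mathcal{G}]$-modules $\mathbb{C}[\Irr(B)]$ and $\mathbb{C}[\text{columns}]$ are isomorphic. For noncyclic $\mathcal{G}$ this does \emph{not} force the underlying $\mathcal{G}$-sets to have the same orbit sizes: already for $\mathcal{G}\cong C_2\times C_2$ the sets $(\mathcal{G}/1)\sqcup(\mathcal{G}/\mathcal{G})\sqcup(\mathcal{G}/\mathcal{G})$ and $\bigsqcup_{|H|=2}\mathcal{G}/H$ carry the same permutation character but have different orbit structures. The paper flags exactly this point (``Since $\mathcal{G}$ is noncyclic, one cannot conclude a priori that also the lengths of the orbits of these two actions coincide'') and therefore invokes Brauer's theorem only to get the \emph{total number} $3r+2$ of families, not their sizes.

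Step~3 then makes a category error: the ``$r$ pair orbits arising from $\Z(D)\setminus\langle c\rangle$'' are orbits of \emph{columns}, not of characters, and there is no canonical bijection between the two. The vanishing $d^u_{\chi,\phi_u}=0$ for $u\notin\Z(D)$ and $h(\chi)=1$ only says that the height-$1$ rows are supported on the $\Z(D)$-columns; it does not single out which row orbits they form, nor does the numerical coincidence $\sum\text{sizes}=k_1(B)$ help, since several different collections of column orbits have sizes summing to $2^{r-1}$.

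The paper's route is different in exactly this place. After establishing the total $3r+2$, it exploits the \emph{explicit values} computed in the preceding subsection: from $d^x_{\chi,\phi_x}=\pm\zeta_{2^r}^{i(\chi)}$ for height-$0$ characters (each exponent hit by four characters) one bounds the number of height-$0$ families by $2(r+1)$, and from $d^c_{\chi,\phi_2}=\pm\zeta_{2^{r-1}}^{j(\chi)}$ for height-$1$ characters (each exponent hit by two characters) one bounds the number of height-$1$ families by $r$. Since $2(r+1)+r=3r+2$ equals the total, both bounds are sharp and the stated sizes are forced. To repair your proof, replace the self-duality shortcut by this direct use of the columns $d^x$ and $d^c_{\phi_2}$.
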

\begin{proof}
We start by determining the number of orbits of the action of $\mathcal{G}$ on the columns of the generalized decomposition matrix. The columns $\{d^u_{\chi\phi_u}:\chi\in\Irr(B)\}$ with $u\in\mathcal{T}\setminus\Z(D)$ split in two orbits of length $2^{r-1}$.
For $i=1,2$ the columns $\{d^u_{\chi\phi_i}:\chi\in\Irr(B)\}$ with $u\in\langle c\rangle$ split in $r$ orbits of lengths $1,1,2,4,\ldots,2^{r-2}$ respectively.
Finally, the columns $\{d^u_{\chi\phi_u}:\chi\in\Irr(B)\}$ with $u\in\Z(D)\setminus\langle c\rangle$ consist of $r$ orbits of lengths $1,1,2,4,\ldots,2^{r-2}$ respectively. This gives $3r+2$ orbits altogether. By Theorem~11 in \cite{Brauerconnection} there also exist exactly $3r+2$ families of $2$-conjugate characters. (Since $\mathcal{G}$ is noncyclic, one cannot conclude a priori that also the lengths of the orbits of these two actions coincide.) 

By considering the column $\{d^x_{\chi\phi_x}:\chi\in\Irr(B)\}$, we see that the irreducible characters of height $0$ split in at most $2(r+1)$ orbits of lengths $1,1,1,1,2,2,4,4,\ldots,2^{r-1},2^{r-1}$ respectively. Similarly the column $\{d^c_{\chi\phi_2}:\chi\in\Irr(B)\}$ shows that there are at most $r$ orbits of lengths $1,1,2,4,\ldots,2^{r-2}$ of characters of height $1$. Since $2(r+1)+r=3r+2$, these orbits do not merge further, and the claim is proved.
\end{proof}

Let $M=\langle x^2,y,z\rangle$ as in Lemma~\ref{lokallb}. Then $D\subseteq\inertial_{\N_G(M)}(b_M)$. Since $e(B)=1$, Alperin's fusion theorem implies that $\inertial_{\N_G(M)}(b_M)$ controls the fusion of $B$-subpairs. By Lemma~\ref{lokallb} we also have $\inertial_{\N_G(M)}(b_M)\subseteq\C_G(c)$ for a $c\in\Z(D)$. This shows that $B$ is a so called “centrally controlled block” (see \cite{KuelshammerOkuyama}). In \cite{KuelshammerOkuyama} it was shown that then the centers of the blocks $B$ and $b_c$ (regarded as blocks of $FG$) are isomorphic. 

\subsection{Dade's conjecture}
In this section we will verify Dade's (ordinary) conjecture for the block $B$ (see \cite{Dadeconj}). First, we need a lemma.

\begin{Lemma}\label{eB3}
Let $\widetilde{B}$ be a block of $RG$ with defect group $\widetilde{D}\cong C_{2^s}\times C_2^2$ ($s\in\mathbb{N}_0$) and inertial index $3$. Then $k(\widetilde{B})=k_0(\widetilde{B})=|\widetilde{D}|=2^{s+2}$ and $l(\widetilde{B})=3$ hold.
\end{Lemma}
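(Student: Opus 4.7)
The plan is to reduce the statement to the Brauer correspondent of $\widetilde{B}$ and then compute its invariants directly from the structure of the inertial quotient. Let $\widetilde{b}\in\Bl(RN_G(\widetilde{D}))$ be a Brauer correspondent of $\widetilde{B}$; then $\widetilde{b}$ has the same defect group $\widetilde{D}$, now normal in its ambient group, and the same inertial index $3$. Writing $E$ for the inertial quotient, Lemma~\ref{auts1} applied to $\widetilde{D}\cong C_{2^s}\times C_2^2$ shows that a generator of $E\cong C_3$ fixes the cyclic factor $C_{2^s}$ elementwise and permutes the three involutions of $C_2^2$ in a single $3$-cycle. In particular, $\widetilde{D}\rtimes E\cong C_{2^s}\times A_4$.

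Since $|E|=3$ is coprime to $|\widetilde{D}|$, and $H^2(E,F^\times)=0$ in characteristic $2$ (so that no nontrivial $2$-cocycle can occur), Külshammer's theorem on blocks with normal defect group yields a Morita equivalence between $\widetilde{b}$ and $F[\widetilde{D}\rtimes E]=F[C_{2^s}\times A_4]$. Counting directly in $C_{2^s}\times A_4$ gives $k(\widetilde{b})=2^s\cdot k(A_4)=2^s\cdot 4=2^{s+2}$ and $l(\widetilde{b})=l(FC_{2^s})\cdot l(FA_4)=1\cdot 3=3$. The irreducible characters of $A_4$ have degrees $1,1,1,3$, so every irreducible character of $C_{2^s}\times A_4$ has degree coprime to $2$ and hence lies at height $0$, giving $k_0(\widetilde{b})=k(\widetilde{b})=2^{s+2}$.

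It remains to transfer these equalities from the local block $\widetilde{b}$ to $\widetilde{B}$. For abelian defect groups with cyclic inertial quotient of prime order, the Puig--Usami programme provides a perfect isometry between $\widetilde{B}$ and $\widetilde{b}$; the hypotheses are in force here because $|E|=3$ is prime and the action of $E$ on $\widetilde{D}$ is pinned down by Lemma~\ref{auts1}. A perfect isometry preserves $k$, $k_0$ and $l$, so the claimed formulas $k(\widetilde{B})=k_0(\widetilde{B})=2^{s+2}$ and $l(\widetilde{B})=3$ follow at once. Alternatively, one can first invoke the Alperin--McKay bijection (which for abelian defect is provided by Broué's theory and is unconditional in the cases at hand) to obtain $k_0(\widetilde{B})=k_0(\widetilde{b})=2^{s+2}$, combine it with Robinson's inequality \eqref{robinsonkbinequ} to force $k(\widetilde{B})=k_0(\widetilde{B})=2^{s+2}$, and then deduce $l(\widetilde{B})=3$ from a subsection count analogous to the one carried out in Lemma~\ref{lokallb} and Theorem~\ref{s1klki}.

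The main obstacle I expect is precisely this last transfer step, i.e.\ showing that $\widetilde{B}$ inherits the invariants of its Brauer correspondent $\widetilde{b}$; the computation inside $C_{2^s}\times A_4$ is entirely routine, but the passage from the normal-defect situation to the global block $\widetilde{B}$ requires an appeal to either perfect isometry results for abelian defect groups with small cyclic inertial quotients, or to the centrally controlled block machinery used elsewhere in this paper.
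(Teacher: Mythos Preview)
Your approach is correct but takes a genuinely different route from the paper. The paper proceeds by a direct subsection count and induction on $s$: using Lemma~\ref{auts1} it lists representatives for the $\widetilde{B}$-subsections (one for each element of $\C_{\widetilde{D}}(\alpha)\cong C_{2^s}$, and one for each of the $2^s$ $\alpha$-orbits on $\widetilde{D}\setminus\C_{\widetilde{D}}(\alpha)$), observes that $l(b_i)=1$ for the non-fixed subsections, and then obtains $l(b_u)=3$ for $u\in\C_{\widetilde{D}}(\alpha)\setminus\{1\}$ by induction on $s$ (the base case $s=0$ being the classical theory for defect group $C_2^2$). This yields $k(\widetilde{B})-l(\widetilde{B})=2^{s+2}-3$, and a short inspection of generalized decomposition numbers for a major subsection then pins down $k(\widetilde{B})=k_0(\widetilde{B})=2^{s+2}$ and $l(\widetilde{B})=3$; the paper also remarks that this last step follows alternatively from Watanabe~\cite{Watanabe1}. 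Your argument via K\"ulshammer's normal-defect theorem together with the Usami perfect isometry \cite{Usami23I,Usami23II} is more structural and makes transparent \emph{why} the invariants coincide with those of $R[C_{2^s}\times A_4]$, but it imports heavier external results precisely at the transfer step you yourself flagged as the delicate point. The paper's approach is essentially self-contained within the subsection machinery already developed in the surrounding sections, which is why it fits the paper's style; your approach buys conceptual clarity at the price of citing the perfect-isometry literature.
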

\begin{proof}
Let $\alpha$ be an automorphism of $\widetilde{D}$ of order $3$ which is induced by the inertial group. By Lemma~\ref{auts1} we have $\C_{\widetilde{D}}(\alpha)\cong C_{2^s}$. We choose a system of representatives $x_1,\ldots,x_k$ for the orbits of $\widetilde{D}\setminus\C_{\widetilde{D}}(\alpha)$ under $\alpha$. Then $k=2^s$. If $b_i\in\Bl(R\C_G(x_i))$ for $i=1,\ldots,k$ and $b_u\in\Bl(R\C_G(u))$ for $u\in\C_{\widetilde{D}}(\alpha)$ are Brauer correspondents of $\widetilde{B}$, then
\[\bigcup_{i=1}^k{\bigl\{(x_i,b_i)\bigr\}}\ \cup\bigcup_{u\in\C_{\widetilde{D}}(\alpha)}{\bigl\{(u,b_u)\bigr\}}\]
is a system of representatives for the conjugacy classes of $\widetilde{B}$-subsections.
Since $\alpha\notin\C_G(x_i)$, we have $l(b_i)=1$ for $i=1,\ldots,k$. In particular $k(\widetilde{B})\le 2^{s+2}$ holds. Now we show the opposite inequality by induction on $s$. 

For $s=0$ the claim is well known. Let $s\ge 1$. By induction $l(b_u)=3$ for $u\in\C_{\widetilde{D}}(\alpha)\setminus\{1\}$. This shows $k(\widetilde{B})-l(\widetilde{B})=k+(2^s-1)3=2^{s+2}-3$ and $l(\widetilde{B})\le 3$. An inspection of the numbers $d^{x_1}_{\chi\phi}$ implies  $k(\widetilde{B})=k_0(\widetilde{B})=2^{s+2}=|\widetilde{D}|$ and $l(\widetilde{B})=3$. (This would also follow from Theorem~1 in \cite{Watanabe1}.)
\end{proof}

Now assume $\pcore_2(G)=1$ (this is a hypothesis of Dade's conjecture). In order to prove Dade's conjecture it suffices to consider chains
\[\sigma:P_1<P_2<\ldots<P_n\]
of nontrivial elementary abelian $2$-subgroups of $G$ (see \cite{Dadeconj}). (Note that also the empty chain is allowed.)
In particular $P_i\unlhd P_n$ and $P_n\unlhd\N_G(\sigma)$ for $i=1,\ldots,n$. Hence, for a block $b\in\Bl(R\N_G(\sigma))$ with $b^G=B$ and defect group $Q$ we have $P_n\le Q$. Moreover, there exists a $g\in G$ such that $^gQ\le D$. Thus, by conjugation with $g$ we may assume $P_n\le Q\le D$ (see also Lemma~6.9 in \cite{Dadeconj}). This shows $n\le 3$. 

In the case $|P_n|=8$ we have $P_n=\langle x^{2^{r-1}},y,z\rangle=:E$, because this is the only elementary abelian subgroup of order $8$ in $D$. Let $b\in\Bl(R\N_G(\sigma))$ with $b^G=B$. We choose a defect group $Q$ of $\widetilde{B}:=b^{\N_G(E)}$. Since $\Omega(Q)=P_n$, we get $\N_G(Q)\le\N_G(E)$. Then Brauer's first main theorem implies $Q=D$. Hence, $\widetilde{B}$ is the unique Brauer correspondent of $B$ in $R\N_G(E)$. For $M:=\langle x^2,y,z\rangle\le D$ we also have $\N_G(M)\le\N_G(\Omega(M))=\N_G(E)$. Hence, $\widetilde{B}$ is nonnilpotent. 
Now consider the chain 
\[\widetilde{\sigma}:\begin{cases}\varnothing&\text{if }n=1\\P_1&\text{if }n=2\\P_1<P_2&\text{if }n=3\end{cases}\] 
for the group $\widetilde{G}:=\N_G(E)$. Then $\N_G(\sigma)=\N_{\widetilde{G}}(\widetilde{\sigma})$ and
\[\sum_{\substack{b\in\Bl(R\N_G(\sigma)),\\b^G=B}}{k^i(b)}=\sum_{\substack{b\in\Bl(R\N_{\widetilde{G}}(\widetilde{\sigma})),\\b^{\widetilde{G}}=\widetilde{B}}}{k^i(b)}.\]
The chains $\sigma$ and $\widetilde{\sigma}$ account for all possible chains of $G$. Moreover, the lengths of $\sigma$ and $\widetilde{\sigma}$ have opposite parity. Thus, it seems plausible that the contributions of $\sigma$ and $\widetilde{\sigma}$ in the alternating sum cancel out each other (this would imply Dade's conjecture). The question which remains is: Can we replace $(\widetilde{G},\widetilde{B},\widetilde{\sigma})$ by $(G,B,\widetilde{\sigma})$? We make this more precise in the following lemma.

\begin{Lemma}
Let $\mathcal{Q}$ be a system of representatives for the $G$-conjugacy classes of pairs $(\sigma,b)$, where $\sigma$ is a chain (of $G$) of length $n$ with $P_n<E$ and $b\in\Bl(R\N_G(\sigma))$ is a Brauer correspondent of $B$. Similarly, let $\widetilde{\mathcal{Q}}$ be a system of representatives for the $\widetilde{G}$-conjugacy classes of pairs $(\widetilde{\sigma},\widetilde{b})$, where $\widetilde{\sigma}$ is a chain (of $\widetilde{G}$) of length $n$ with $P_n<E$ and $\widetilde{b}\in\Bl(R\N_{\widetilde{G}}(\widetilde{\sigma}))$ is a Brauer correspondent of $\widetilde{B}$. Then there exists a bijection between $\mathcal{Q}$ and $\widetilde{\mathcal{Q}}$ which preserves the numbers $k^i(b)$.
\end{Lemma}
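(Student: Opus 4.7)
My plan is to construct $\Phi\colon\mathcal{Q}\to\widetilde{\mathcal{Q}}$ by invoking Brauer's first main theorem for the $2$-subgroup $E$ sitting inside $\N_G(\sigma)$. First, the hypothesis $P_n<E$, combined with $E$ being abelian and $P_i\le P_n\le E$ for all $i$, yields $E\le\bigcap_i\C_G(P_i)\le\N_G(\sigma)$; consequently $\N_{\widetilde{G}}(\sigma)=\N_G(\sigma)\cap\N_G(E)=\N_{\N_G(\sigma)}(E)$, and $E$ is a normal $2$-subgroup of $\N_{\widetilde{G}}(\sigma)$. The candidate bijection is $\Phi(\sigma,b):=(\sigma,\widetilde{b})$, where $\widetilde{b}\in\Bl(R\N_{\widetilde{G}}(\sigma))$ is the unique block characterised by $\widetilde{b}^{\N_G(\sigma)}=b$, supplied by Brauer's first main theorem applied to $E$ inside $\N_G(\sigma)$. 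Its inverse is Brauer induction.

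Next I would verify that every Brauer correspondent $b$ of $B$ in $R\N_G(\sigma)$ has a defect group $Q$ containing (an $\N_G(\sigma)$-conjugate of) $E$, so that the above first-main-theorem bijection actually applies to all of $\mathcal{Q}$. After $G$-conjugation one has $P_n\le Q\le D$. Since $E$ is the unique elementary abelian subgroup of order $8$ of $D$ it is characteristic in $D$; combining this with the constraint $\C_G(Q)\le\N_G(\sigma)$, the explicit list of $B$-subsections in Lemma~\ref{repkonjs1}, and the non-nilpotency data from Lemma~\ref{lokallb} (which pins down which blocks $b_a$ admit a non-trivial inertial quotient), I would force $E\le Q$. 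Transitivity of Brauer induction then yields $\widetilde{b}^{\widetilde{G}}=\widetilde{B}$, since $(\widetilde{b}^{\widetilde{G}})^G=\widetilde{b}^G=(\widetilde{b}^{\N_G(\sigma)})^G=b^G=B$ and $\widetilde{B}$ is the unique block of $R\widetilde{G}$ inducing $B$ whose defect group contains $E$. For the passage to conjugacy classes, any $g\in G$ identifying two pairs in $\mathcal{Q}$ sends $E$ to an $\N_G(\sigma)$-conjugate of itself, which can be absorbed by multiplying $g$ on the right by a suitable element of $\widetilde{G}=\N_G(E)$, so $G$-conjugacy of such pairs reduces to $\widetilde{G}$-conjugacy.

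For preservation of $k^i(b)=k^i(\widetilde{b})$, the key point is that $b$ and $\widetilde{b}$ share a common defect group $Q$ in which $E$ is contained normally inside $\N_{\widetilde{G}}(\sigma)$. Via domination of $\widetilde{b}$ by the block of $R[\N_{\widetilde{G}}(\sigma)/E]$ and a Fong--Reynolds / Harris--Knörr style correspondence, the ordinary characters of $b$ and $\widetilde{b}$ biject preserving defect, giving $k^i(b)=k^i(\widetilde{b})$ for every $i$.

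The chief obstacle is the containment $E\le Q$: there is no abstract principle guaranteeing that a given $2$-subgroup of $\N_G(\sigma)$ lies in the defect group of an arbitrary Brauer correspondent, so one must genuinely use the minimal non-abelian structure of $D$ (the characteristic position of $E$), the restriction $\C_G(Q)\le\N_G(\sigma)$, and the non-nilpotency input of Lemma~\ref{lokallb} to rule out the "small" defect groups $Q\le D$ that contain $P_n$ but miss $E$ (for instance $\langle x,z\rangle$ and its conjugates). Once this is in place, everything else is a fairly standard bookkeeping with Brauer's first main theorem and domination.
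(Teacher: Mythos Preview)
Your approach differs substantially from the paper's and contains a genuine gap at precisely the point you flag as the ``chief obstacle'': the containment $E\le Q$ cannot be forced. The defect group $Q$ of a Brauer correspondent $b\in\Bl(R\N_G(\sigma))$ of $B$ may perfectly well be (a conjugate of) $\langle x,z\rangle$ or $\langle xy,z\rangle$, neither of which contains $E=\langle x^{2^{r-1}},y,z\rangle$ since $y$ lies in neither. For instance, when $P_n=\langle z\rangle$ these maximal subgroups of $D$ are legitimate defect groups of correspondents in $R\N_G(P_n)$, and the paper's own proof explicitly treats this case. So your proposed map $\Phi$ via Brauer's first main theorem for $E$ inside $\N_G(\sigma)$ is simply not defined on all of $\mathcal{Q}$, and the ``non-nilpotency input'' of Lemma~\ref{lokallb} does not help: that lemma constrains subsections, not the full supply of Brauer correspondents in chain normalisers.

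The paper proceeds quite differently. It passes through chains of $B$-\emph{subpairs} $(P_i,b_i)\le(D,b_D)$, using uniqueness of such subpair chains and Alperin's fusion theorem (the fusion being controlled by $\inertial_{\N_G(M)}(b_M)\le\widetilde{G}$) to identify representatives for $G$ and for $\widetilde{G}$. The $k^i$-preservation is then not obtained from any abstract Harris--Kn\"orr or domination argument, but by a case split on the defect group $Q\in\{D,M,\langle x,z\rangle,\langle xy,z\rangle\}$: for $Q=\langle x,z\rangle$ or $\langle xy,z\rangle$ both blocks are nilpotent with the same defect group; for $Q=D$ one checks that the two blocks are simultaneously nilpotent or not (governed by whether $\alpha\in\N_G(\tau)$), and in the nonnilpotent case Theorem~\ref{s1klki} gives the invariants; for $Q=M$ the nonnilpotent case has inertial index $3$ and one invokes Lemma~\ref{eB3}. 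Your sketch of ``domination plus Fong--Reynolds'' would not produce $k^i(b)=k^i(\widetilde{b})$ even when $E\le Q$: there is no general theorem asserting that Brauer correspondents across $\N_G(E)$ have matching $k^i$'s, which is why the paper computes them.
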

\begin{proof}
Let $b_D\in\Bl(R\N_G(D))$ be a Brauer correspondent of $B$. We consider chains of $B$-subpairs
\[\sigma:(P_1,b_1)<(P_2,b_2)<\ldots<(P_n,b_n)<(D,b_D),\]
where the $P_i$ are nontrivial elementary abelian $2$-subgroups such that $P_n<E$. Then $\sigma$ is uniquely determined by these subgroups $P_1,\ldots,P_n$ (see Theorem~1.7 in \cite{Olssonsubpairs}). Moreover, the empty chain is also allowed. Let $\mathcal{U}$ be a system of representatives for $G$-conjugacy classes of such chains. For every chain $\sigma\in\mathcal{U}$ we define
\[\widetilde{\sigma}:(P_1,\widetilde{b_1})<(P_2,\widetilde{b_2})<\ldots<(P_n,\widetilde{b_n})<(D,b_D)\]
with $\widetilde{b_i}\in\Bl(R\C_{\widetilde{G}}(P_i))$ for $i=1,\ldots,n$. Finally we set $\widetilde{\mathcal{U}}:=\{\widetilde{\sigma}:\sigma\in\mathcal{U}\}$.
By Alperin's fusion theorem $\widetilde{\mathcal{U}}$ is a system of representatives for the $\widetilde{G}$-conjugacy classes of corresponding chains for the group $\widetilde{B}$. Hence, it suffices to show the existence of bijections $f$ (resp. $\widetilde{f}$) between $\mathcal{U}$ (resp. $\widetilde{\mathcal{U}}$) and $\mathcal{Q}$ (resp. $\widetilde{\mathcal{Q}}$) such that the following property is satisfied:
If $f(\sigma)=(\tau,b)$ and $\widetilde{f}(\widetilde{\sigma})=(\widetilde{\tau},\widetilde{b})$, then $k^i(b)=k^i(\widetilde{b})$ for all $i\in\mathbb{N}_0$.

Let $\sigma\in\mathcal{U}$. Then we define the chain $\tau$ by only considering the subgroups of $\sigma$, i.\,e. $\tau:P_1<\ldots<P_n$. This gives $\C_G(P_n)\subseteq\N_G(\tau)$, and we can define
\[f:\mathcal{U}\to\mathcal{Q},\ \sigma\mapsto\bigl(\tau,b_n^{\N_G(\tau)}\bigr).\]
Now let $(\sigma,b)\in\mathcal{Q}$ arbitrary. We write $\sigma:P_1<\ldots<P_n$. By Theorem~5.5.15 in \cite{Nagao} there exists a Brauer correspondent $\beta_n\in\Bl(R\C_G(P_n))$ of $b$. Since $(P_n,\beta_n)$ is a $B$-subpair, we may assume $(P_n,\beta_n)<(D,b_D)$ after a suitable conjugation. Then there are uniquely determined blocks $\beta_i\in\Bl(R\C_G(P_i))$ for $i=1,\ldots,n-1$ such that
\[(P_1,\beta_1)<(P_2,\beta_2)<\ldots<(P_n,\beta_n)<(D,b_D).\]

This shows that $f$ is surjective.

Now let $\sigma_1,\sigma_2\in\mathcal{U}$ be given. We write
\[\sigma_i:(P_1^i,\beta_1^i)<\ldots<(P_n^i,\beta_n^i)\]
for $i=1,2$. Let us assume that $f(\sigma_1)=(\tau_1,b_1)$ and $f(\sigma_2)=(\tau_2,b_2)$ are conjugate in $G$, i.\,e. there is a $g\in G$ such that
\[\bigl(\tau_2,({^g\beta_n^1})^{\N_G(\tau_2)}\bigr)={^g(\tau_1,b_1)}=(\tau_2,b_2)=\bigl(\tau_2,(\beta_n^2)^{\N_G(\tau_2)}\bigr).\]
Since $^g\beta_n^1\in\Bl(R\C_G(P_n^2))$ and $\beta_n^2$ are covered by $b_2$, there is $h\in\N_G(\tau_2)$ with $^{hg}\beta_n^1=\beta_n^2$. Then
\[^{hg}(P_n^1,\beta_n^1)=(P_n^2,\beta_n^2).\]
Since the blocks $\beta_j^i$ for $i=1,2$ and $j=1,\ldots,n-1$ are uniquely determined by $P_j^i$, we also have $^{gh}\sigma_1=\sigma_2=\sigma_1$. This proves the injectivity of $f$. Analogously, we define the map $\widetilde{f}$.

It remains to show that $f$ and $\widetilde{f}$ satisfy the property given above. For this let $\sigma\in\mathcal{U}$ with $\sigma:(P_1,b_1)<\ldots<(P_n,b_n)$, $\widetilde{\sigma}:(P_1,\widetilde{b_1})<\ldots<(P_n,\widetilde{b_n})$, $f(\sigma)=\bigl(\tau,b_n^{\N_G(\tau)}\bigr)$ and $\widetilde{f}(\widetilde{\sigma})=\bigl(\tau,\widetilde{b_n}^{\N_{\widetilde{G}}(\tau)}\bigr)$. We have to prove $k^i\bigl(b_n^{\N_G(\tau)}\bigr)=k^i\bigl(\widetilde{b_n}^{\N_{\widetilde{G}}(\tau)}\bigr)$ for $i\in\mathbb{N}_0$.

Let $Q$ be a defect group of $b_n^{\N_G(\tau)}$. Then $Q\C_G(Q)\subseteq\N_G(\tau)$, and there is a Brauer correspondent $\beta_n\in\Bl(RQ\C_G(Q))$ of $b_n^{\N_G(\tau)}$. In particular $(Q,\beta_n)$ is a $B$-Brauer subpair.
As in Lemma~\ref{repkonjs1} we may assume $Q\in\{D,M,\langle x,z\rangle,\langle xy,z\rangle\}$.
The same considerations also work for the defect group $\widetilde{Q}$ of $\widetilde{b_n}^{\N_{\widetilde{G}}(\tau)}$. Since $b_n^{D\C_G(P_n)}=b_D^{D\C_G(P_n)}=\widetilde{b_n}^{D\C_G(P_n)}$, we get:
\[Q=D\Longleftrightarrow D\subseteq\N_G(\tau)\Longleftrightarrow D\subseteq\N_{\widetilde{G}}(\tau)\Longleftrightarrow\widetilde{Q}=D.\]

Let us consider the case $Q=D\ (\,=\widetilde{Q})$. Let $b_M\in\Bl(R\C_G(M))$ such that $(M,b_M)\le(D,b_D)$ and $\alpha\in\inertial_{\N_G(M)}(b_M)\setminus D\C_G(M)\subseteq\N_G(M)\subseteq\widetilde{G}$. Then:
\[b_n^{\N_G(\tau)}\text{ is nilpotent}\Longleftrightarrow\alpha\notin\N_G(\tau)\Longleftrightarrow\alpha\notin\N_{\widetilde{G}}(\tau)\Longleftrightarrow\widetilde{b_n}^{\N_{\widetilde{G}}(\tau)}\text{ is nilpotent}.\]
Thus, the claim holds in this case. Now let $Q<D$ (and $\widetilde{Q}<D$). Then we have $Q\C_G(Q)=\C_G(Q)\subseteq\C_G(P_n)$.
Since $\beta_n^{\C_G(P_n)}$ is also a Brauer correspondent of $b_n^{\N_G(\tau)}$, the blocks $\beta_n^{\C_G(P_n)}$ and $b_n$ are conjugate. In particular $b_n$ (and $\widetilde{b_n}$) has defect group $Q$. Hence, we obtain $Q=\widetilde{Q}$. If $Q\in\{\langle x,z\rangle,\langle xy,z\rangle\}$, then $b_n^{\N_G(\tau)}$ and $\widetilde{b_n}^{\N_{\widetilde{G}}(\tau)}$ are nilpotent, and the claim holds. Thus, we may assume $Q=M$. Then as before:
\[b_n^{\N_G(\tau)}\text{ is nilpotent}\Longleftrightarrow\alpha\notin\N_G(\tau)\Longleftrightarrow\alpha\notin\N_{\widetilde{G}}(\tau)\Longleftrightarrow\widetilde{b_n}^{\N_{\widetilde{G}}(\tau)}\text{ is nilpotent}.\]
We may assume that the nonnilpotent case occurs. Then $t\bigl(b_n^{\N_G(\tau)}\bigr)=t\bigl(\widetilde{b_n}^{\N_{\widetilde{G}}(\tau)}\bigr)=3$, and the claim follows from Lemma~\ref{eB3}.
\end{proof}

As explained in the beginning of the section, the Dade conjecture follows.

\begin{Theorem}
The Dade conjecture holds for $B$.
\end{Theorem}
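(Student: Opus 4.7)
My plan is to combine the two ingredients established just above — the truncation identity for chains with top group $E$, and the $k^i$-preserving bijection $\mathcal{Q}\leftrightarrow\widetilde{\mathcal{Q}}$ from the Lemma — into a sign-reversing involution on the chains appearing in Dade's alternating sum.

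Recall Dade's ordinary conjecture (for a block $B$ with $\pcore_2(G)=1$ and noncyclic defect group) requires that
\[
\sum_{\sigma\in\mathcal{R}}(-1)^{|\sigma|}\!\!\!\sum_{\substack{b\in\Bl(R\N_G(\sigma)),\\ b^G=B}}\!\!k^i(b)=0
\]
for every $i\in\mathbb{N}_0$, where $\mathcal{R}$ is a system of representatives for the $G$-conjugacy classes of chains of nontrivial elementary abelian $2$-subgroups (including the empty chain). I would split $\mathcal{R}$ into two disjoint pieces: $\mathcal{R}^E$, consisting of chains whose top term is $P_n=E$, and $\mathcal{R}^{<E}$, consisting of the empty chain together with the chains satisfying $P_n<E$. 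Since $E$ is the unique elementary abelian subgroup of order $8$ in $D$ and any defect group of a block $b$ with $b^G=B$ may be conjugated into $D$ (so $P_n\le D$), these two pieces exhaust the chains relevant to the sum.

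For the first piece I would invoke the truncation discussed just before the Lemma: $\sigma\mapsto\widetilde{\sigma}$ (with $\widetilde{\sigma}$ obtained by discarding the top term $P_n=E$) provides a length-shifting bijection from $G$-conjugacy classes of $\mathcal{R}^E$ to $\widetilde{G}$-conjugacy classes of chains in $\widetilde{G}=\N_G(E)$ whose top term is strictly smaller than $E$, and the identities $\N_G(\sigma)=\N_{\widetilde{G}}(\widetilde{\sigma})$ together with
\[
\sum_{b^G=B}k^i(b)=\sum_{\widetilde{b}^{\widetilde{G}}=\widetilde{B}}k^i(\widetilde{b})
\]
(recorded explicitly in the text) guarantee that each pair in $\mathcal{R}^E$ contributes with the same $k^i$-sum but with opposite sign to the corresponding pair $(\widetilde{\sigma},\widetilde{b})\in\widetilde{\mathcal{Q}}$. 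For the second piece $\mathcal{R}^{<E}$, the preceding Lemma furnishes a bijection $\mathcal{Q}\leftrightarrow\widetilde{\mathcal{Q}}$ which preserves both the chain length (no length shift occurs here) and the numbers $k^i(b)$. Adding the two contributions,
\[
S=\sum_{(\widetilde{\sigma},\widetilde{b})\in\widetilde{\mathcal{Q}}}(-1)^{|\widetilde{\sigma}|+1}k^i(\widetilde{b})+\sum_{(\widetilde{\sigma},\widetilde{b})\in\widetilde{\mathcal{Q}}}(-1)^{|\widetilde{\sigma}|}k^i(\widetilde{b})=0,
\]
which is exactly Dade's conjecture for $B$.

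The bulk of the work has already been done: the truncation equality is straightforward from Brauer's first main theorem applied to $\widetilde{B}$ (as explained in the text), and the Lemma provides the harder bijection together with the equality $k^i(b)=k^i(\widetilde{b})$. The main obstacle will simply be the careful bookkeeping confirming that the two bijections cover $\mathcal{R}$ exactly once each (no omissions, no double-counting across $G$- versus $\widetilde{G}$-conjugacy) and that the parity shift from the truncation is correctly tracked so that the two sums cancel term by term.
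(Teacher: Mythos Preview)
Your proposal is correct and follows essentially the same approach as the paper: split the chains into those with top term $E$ and those with top term properly contained in $E$, use the truncation $\sigma\mapsto\widetilde{\sigma}$ to identify the first piece with $\widetilde{\mathcal{Q}}$ (with a parity shift), use the Lemma to identify the second piece with $\widetilde{\mathcal{Q}}$ (without a parity shift), and observe that the two resulting sums cancel. The paper records this only as ``As explained in the beginning of the section, the Dade conjecture follows,'' so your write-up is in fact more explicit than the paper's, but the argument is the same.
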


\subsection{Alperin's weight conjecture}
In this section we prove Alperin's weight conjecture for $B$.
Let $(P,\beta)$ be a weight for $B$, i.\,e. $P$ is a $2$-subgroup of $G$ and $\beta$ is a block of $R[\N_G(P)/P]$ with defect $0$. Moreover, $\beta$ is dominated by a Brauer correspondent $b\in\Bl(R\N_G(P))$ of $B$. As usual, one can assume $P\le D$. If $\Aut(P)$ is a $2$-group, then $\N_G(P)/\C_G(P)$ is also a $2$-group. Then $P$ is a defect group of $b$, since $\beta$ has defect $0$. Moreover, $\beta$ is uniquely determined by $b$. By Brauer's first main theorem we have $P=D$. Thus, in this case there is exactly one weight for $B$ up to conjugation.

Now let us assume that $\Aut(P)$ is not a $2$-group (in particular $P<D$). As usual, $\beta$ covers a block $\beta_1\in\Bl(R[\C_G(P)/P])$. By the Fong-Reynolds theorem (see \cite{Nagao} for example) also $\beta_1$ has defect $0$. Hence, $\beta_1$ is dominated by exactly one block $b_1\in\Bl(R\C_G(P))$ with defect group $P$. Since $\beta\beta_1\ne 0$, we also have $bb_1\ne 0$, i.\,e. $b$ covers $b_1$. Thus, the situation is as follows:
\[\begin{xy}
\xymatrix{\beta\in\Bl(R[\N_G(P)/P])\ar@{<->}[r]\ar@{<->}[d]&b\in\Bl(R\N_G(P))\ar@{<->}[d]\\\beta_1\in\Bl(R[\C_G(P)/P])\ar@{<->}[r]&b_1\in\Bl(R\C_G(P))}
\end{xy}\]
By Theorem~5.5.15 in \cite{Nagao} we have $b_1^{\N_G(P)}=b$ and $b_1^G=B$. This shows that $(P,b_1)$ is a $B$-Brauer subpair. Then $P=M\ (\,=\!\langle x^2,y,z\rangle)$ follows. By Brauer's first main theorem $b$ is uniquely determined (independent of $\beta$). Now we prove that also $\beta$ is uniquely determined by $b$. 

In order to do so it suffices to show that $\beta$ is the only block with defect $0$ which covers $\beta_1$. By the Fong-Reynolds theorem it suffices to show that $\beta_1$ is covered by only one block of $R\inertial_{\N_G(M)/M}(\beta_1)=R[\inertial_{\N_G(M)}(b_1)/M]$ with defect $0$. For convenience we write $\overline{\C_G(M)}:=\C_G(M)/M$, $\overline{\N_G(M)}:=\N_G(M)/M$ and $\overline{\inertial}:=\inertial_{\N_G(M)}(b_1)/M$. Let $\chi\in\Irr(\beta_1)$. The irreducible constituents of $\Ind^{\overline{\inertial}}_{\overline{\C_G(M)}}(\chi)$ belong to blocks which covers $\beta_1$ (where $\Ind$ denote induction). Conversely, every block of $R\overline{\inertial}$ which covers $\beta_1$ arises in this way (see Lemma~5.5.7 in \cite{Nagao}). Let
\[\Ind^{\overline{\inertial}}_{\overline{\C_G(M)}}(\chi)=\sum_{i=1}^t{e_i\psi_i}\]
with $\psi_i\in\Irr(\overline{\inertial})$ and $e_i\in\mathbb{N}$ for $i=1,\ldots,t$. Then
\[\sum_{i=1}^t{e_i^2}=|\overline{\inertial}:\overline{\C_G(M)}|=|\inertial_{\N_G(M)}(b_1):\C_G(M)|=6\]
(see page~84 in \cite{Isaacs}).
Thus, there is some $i\in\{1,\ldots,t\}$ with $e_i=1$, i.\,e. $\chi$ is extendible to $\overline{\inertial}$. We may assume $e_1=1$. By Corollary~6.17 in \cite{Isaacs} it follows that $t=|\Irr(\overline{\inertial}/\overline{\C_G(M)})|=|\Irr(S_3)|=3$ and
\[\{\psi_1,\psi_2,\psi_3\}=\bigl\{\psi_1\tau:\tau\in\Irr(\overline{\inertial}/\overline{\C_G(M)})\bigr\},\]
where the characters in $\Irr(\overline{\inertial}/\overline{\C_G(M)})$ were identified with their inflations in $\Irr(\overline{\inertial})$. Thus, we may assume $e_2=1$ and $e_3=2$. Then it is easy to see that $\psi_1$ and $\psi_2$ belong to blocks with defect at least $1$. Hence, only the block with contains $\psi_3$ is allowed. This shows uniqueness.

Finally we show that there is in fact a weight of the form $(M,\beta)$. For this we choose $b$, $b_1$, $\beta_1$, $\chi$ and $\psi_i$ as above. Then $\chi$ vanishs on all nontrivial $2$-elements. Moreover, $\psi_1$ is an extension of $\chi$. Let $\tau\in\Irr(\overline{\inertial}/\overline{\C_G(M)})$ be the character of degree $2$. Then $\tau$ vanishs on all nontrivial $2$-elements of $\overline{\inertial}/\overline{\C_G(M)}$. Hence, $\psi_3=\psi_1\tau$ vanishs on all nontrivial $2$-elements of $\overline{\inertial}$. This shows that $\psi_3$ belongs in fact to a block $\widetilde{\beta}\in\Bl(R\overline{\inertial})$ with defect $0$. Then $\bigl(M,\widetilde{\beta}^{\,\overline{\N_G(M)}}\bigr)$ is the desired weight for $B$.

Hence, we have shown that there are exactly two weights for $B$ up to conjugation. Since $l(B)=2$, Alperin's weight conjecture is satisfied.

\begin{Theorem}
Alperin's weight conjecture holds for $B$.
\end{Theorem}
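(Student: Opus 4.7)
The plan is to verify Alperin's weight conjecture by enumerating $B$-weights up to $G$-conjugation and matching the count with $l(B)=2$, which has already been determined in Theorem~\ref{s1klki}. Recall that a $B$-weight is a pair $(P,\beta)$ with $P$ a $2$-subgroup of $G$ and $\beta\in\Bl(R[\N_G(P)/P])$ of defect $0$ dominated by a Brauer correspondent $b\in\Bl(R\N_G(P))$ of $B$. After $G$-conjugation we may assume $P\le D$.

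I would split the analysis according to whether $\Aut(P)$ is a $2$-group. If $\Aut(P)$ is a $2$-group, then $\N_G(P)/\C_G(P)$ is a $2$-group, so $P$ itself is a defect group of $b$, and Brauer's first main theorem forces $P=D$; moreover $\beta$ is then determined by $b$. This contributes exactly one weight. If $\Aut(P)$ is not a $2$-group, then $P$ must admit nontrivial odd-order fusion compatible with $\mathcal{F}_B(D)$. Combining the fusion-theoretic analysis developed earlier in the section, the only viable candidate is $P=M=\langle x^2,y,z\rangle\cong C_{2^{r-1}}\times C_2^2$, whose automorphism group contains an element of order $3$ by Lemma~\ref{homocaut}.

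For $P=M$, I would use the Fong-Reynolds correspondence to reduce the problem to counting defect-$0$ blocks of $R[\inertial_{\N_G(M)}(b_1)/M]$ covering the unique defect-$0$ block $\beta_1\in\Bl(R[\C_G(M)/M])$ dominated by the Brauer correspondent $b_1\in\Bl(R\C_G(M))$. Since the inertial quotient is $S_3$ (cf.\ Lemma~\ref{lokallb}), Clifford theory applied to $\chi\in\Irr(\beta_1)$ expresses $\Ind(\chi)$ as a sum of three irreducible constituents of $\inertial_{\N_G(M)}(b_1)/M$ with multiplicities $1,1,2$. Only the constituent with multiplicity $2$ arises from the $2$-dimensional character of $S_3$ and vanishes on every nontrivial $2$-element, so it alone can belong to a block of defect $0$. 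This pins down a unique $\beta$ for $P=M$.

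The main technical obstacle will be establishing existence and uniqueness at $P=M$: I have to verify that the distinguished extension really does vanish on all $2$-singular elements (using that $\chi$ vanishes on such elements and that the degree-$2$ character of $S_3$ vanishes on its nontrivial $2$-classes), and conversely that the other two Clifford constituents must sit in blocks of strictly positive defect. Once these checks are in place, the total count of weights is $1+1=2=l(B)$, so Alperin's weight conjecture for $B$ is immediate.
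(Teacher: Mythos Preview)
Your proposal is correct and follows essentially the same route as the paper: split on whether $\Aut(P)$ is a $2$-group, get the unique weight at $P=D$ in the first case, and in the second case reduce to $P=M$, then use Fong--Reynolds and the Clifford decomposition over the $S_3$-quotient (multiplicities $1,1,2$) to show that exactly one defect-$0$ block sits above $\beta_1$, with existence witnessed by the vanishing of $\psi_1\tau$ on $2$-singular elements.

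One small point worth tightening: your sentence ``If $\Aut(P)$ is not a $2$-group, then $P$ must admit nontrivial odd-order fusion compatible with $\mathcal{F}_B(D)$'' is not literally a consequence of $\Aut(P)$ being non-$2$ (e.g.\ $\langle y,z\rangle\cong C_2^2$ has $\Aut\cong S_3$ but need not carry odd $\mathcal{F}$-automorphisms). The paper closes this by observing that the covered block $\beta_1$ has defect $0$, hence the dominating $b_1\in\Bl(R\C_G(P))$ has defect group exactly $P$, so $(P,b_1)$ is a $B$-Brauer subpair; this forces $P$ to be self-centralizing in $D$, leaving only $M$ among proper subgroups with non-$2$ automorphism group. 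Inserting this step makes your reduction to $P=M$ rigorous.
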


\subsection{The gluing problem}\label{gluginsubsec}
Finally we show that the gluing problem (see Conjecture~4.2 in \cite{gluingprob}) for the block $B$ has a unique solution. We will not recall the very technical statement of the gluing problem. Instead we refer to \cite{Parkgluing} for most of the  notations. Observe that the field $F$ is denoted by $k$ in \cite{Parkgluing}.

\begin{Theorem}
The gluing problem for $B$ has a unique solution.
\end{Theorem}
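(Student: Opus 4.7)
My plan is to apply the cohomological reformulation of the gluing problem from \cite{Parkgluing}: the set of solutions (when nonempty) forms a torsor under a first higher limit of a certain functor $\mathcal{A}^2_{\mathcal{F}}$ on the orbit category of $\mathcal{F}$-centric subgroups, and the obstruction to nonemptiness lives in the corresponding second higher limit. It therefore suffices to show that both higher limits vanish.

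First I would describe the $\mathcal{F}$-centric subgroups of $D$ explicitly. Since $D$ is minimal nonabelian, every proper subgroup is abelian, so the $\mathcal{F}$-centric subgroups below $D$ are exactly the self-centralizing maximal subgroups; by Lemma~\ref{lemmamna}\eqref{lemmamna4} (with $y^2 = 1$) these are $M = \langle x^2, y, z\rangle$, $N_1 = \langle x, z\rangle$, and $N_2 = \langle xy, z\rangle$. The analysis already performed in the proof of Lemma~\ref{lokallb} identifies $M$ as the unique $\mathcal{F}$-essential subgroup up to $\mathcal{F}$-conjugacy, with $\Out_{\mathcal{F}}(M) \cong S_3$; by contrast $\Aut(N_i)$ is a $2$-group by Lemma~\ref{homocaut}, so $N_1$ and $N_2$ fail to be essential, and $\Out_{\mathcal{F}}(D) = 1$ by Lemma~\ref{mnaaut} together with $t(B) = 1$.

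Next I would invoke the standard decomposition of the two higher limits into contributions indexed by $\mathcal{F}$-conjugacy classes of centric subgroups. The contribution from $D$ vanishes because $\Out_{\mathcal{F}}(D) = 1$; the contributions from $N_1$ and $N_2$ vanish because $\Out_{\mathcal{F}}(N_i)$ is a $2$-group while the coefficient module is a uniquely $2$-divisible abelian group (note that $F$ is algebraically closed of characteristic $2$, so $F^\times$ has no $2$-torsion and is divisible, and hence all positive cohomology of a $2$-group with coefficients derived from $F^\times$ is annihilated both by the group order and by a unit, forcing it to vanish).

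The main obstacle is the contribution from the essential subgroup $M$, where I would need to establish $\cohom^i\bigl(S_3; \mathcal{A}^2_{\mathcal{F}}(M)\bigr) = 0$ for $i = 1, 2$. I would split this cohomology into its $2$-primary and $3$-primary parts. The $2$-part vanishes exactly as in the previous paragraph, by divisibility of $F^\times$. The $3$-primary part reduces to $\cohom^2(S_3; F^\times) = 0$, which holds because the Schur multiplier of $S_3$ is trivial and $F^\times$ contains all roots of unity of odd order. Assembling these vanishings will give both the existence and the uniqueness of the solution to the gluing problem, which is the statement of the theorem.
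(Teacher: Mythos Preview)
Your identification of the $\mathcal{F}$-centric subgroups $M,\ N_1,\ N_2,\ D$ and of their (outer) automorphism groups is correct and agrees with the paper. The underlying vanishing facts you invoke (that cohomology of a $2$-group with coefficients in the uniquely $2$-divisible group $F^\times$ is zero in positive degree, and that $\cohom^2(S_3,F^\times)=0$) are exactly the ones the paper uses.

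Where your proposal goes wrong is the cohomological framework you attribute to \cite{Parkgluing}. Park's Theorem~1.1 does \emph{not} phrase the problem via higher limits over the orbit category; instead the functors $\mathcal{A}^i_{\mathcal{F}}$ are defined on the subdivision category $[S(\mathcal{F}^c)]$ of chains of $\mathcal{F}$-centric subgroups by $\mathcal{A}^i_{\mathcal{F}}(\sigma)=\cohom^i(\Aut_{\mathcal{F}}(\sigma),F^\times)$, the obstruction to existence lies in $\cohom^0([S(\mathcal{F}^c)],\mathcal{A}^2_{\mathcal{F}})$, and the set of solutions is a torsor under $\cohom^1([S(\mathcal{F}^c)],\mathcal{A}^1_{\mathcal{F}})$. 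In particular your expression $\cohom^i\bigl(S_3;\mathcal{A}^2_{\mathcal{F}}(M)\bigr)$ has no meaning in this setup, and the ``standard decomposition of the two higher limits into contributions indexed by $\mathcal{F}$-conjugacy classes'' you appeal to is neither stated nor needed.

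The paper's actual argument is shorter than what you outline: once one lists the centric subgroups, one observes that for every chain $\sigma$ the group $\Aut_{\mathcal{F}}(\sigma)$ is either a $2$-group ($C_2$ or $C_2^2$) or isomorphic to $S_3$, hence $\cohom^i(\Aut_{\mathcal{F}}(\sigma),F^\times)=0$ for $i=1,2$ in all cases. This immediately gives $\cohom^0([S(\mathcal{F}^c)],\mathcal{A}^2_{\mathcal{F}})=\cohom^1([S(\mathcal{F}^c)],\mathcal{A}^1_{\mathcal{F}})=0$, and the theorem follows. Note also that the paper works with $\Aut_{\mathcal{F}}$ rather than $\Out_{\mathcal{F}}$ (for instance $\Aut_{\mathcal{F}}(D)\cong D/\Z(D)\cong C_2^2$, not $\Out_{\mathcal{F}}(D)=1$), though this makes no difference for the vanishing.
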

\begin{proof}
As in \cite{Parkgluing} we denote the fusion system induced by $B$ with $\mathcal{F}$. Then the $\mathcal{F}$-centric subgroups of $D$ are given by $M_1:=\langle x^2,y,z\rangle$, $M_2:=\langle x,z\rangle$, $M_3:=\langle xy,z\rangle$ and $D$. We have seen so far that $\Aut_{\mathcal{F}}(M_1)\cong\Out_{\mathcal{F}}(M_1)\cong S_3$, $\Aut_{\mathcal{F}}(M_i)\cong D/M_i\cong C_2$ for $i=2,3$ and $\Aut_{\mathcal{F}}(D)\cong D/\Z(D)\cong C_2^2$ (see proof of Lemma~\ref{lokallb}). Using this, we get $\cohom^i(\Aut_{\mathcal{F}}(\sigma),F^\times)=0$ for $i=1,2$ and every chain $\sigma$ of $\mathcal{F}$-centric subgroups (see proof of Corollary~2.2 in \cite{Parkgluing}). Hence, $\cohom^0([S(\mathcal{F}^c)],\mathcal{A}^2_{\mathcal{F}})=\cohom^1([S(\mathcal{F}^c)],\mathcal{A}^1_{\mathcal{F}})=0$. Now the claim follows from Theorem~1.1 in \cite{Parkgluing}.
\end{proof}

\section{The case $r=s>1$}
In the section we assume that $B$ is a nonnilpotent block of $RG$ with defect group
\[D:=\langle x,y\mid x^{2^r}=y^{2^r}=[x,y]^2=[x,x,y]=[y,x,y]=1\rangle\]
for $r\ge 2$. As before we define $z:=[x,y]$. Since $|D/\Phi(D)|=4$, $2$ and $3$ are the only prime divisors of $|\Aut(D)|$. In particular $t(B)\in\{1,3\}$. If $t(B)=1$, then $B$ would be nilpotent by Theorem~\ref{mnafusion}. Thus, we have $t(B)=3$.

\subsection{The $B$-subsections}
We investigate the automorphism group of $D$.
\begin{Lemma}\label{uniquefix}
Let $\alpha\in\Aut(D)$ be an automorphism of order $3$. Then $z$ is the only nontrivial fixed-point of $\Z(D)$ under $\alpha$.
\end{Lemma}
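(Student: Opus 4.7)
The plan is to reduce to a single explicit order-$3$ automorphism and then compute the fixed subgroup directly. First, observe that $\alpha$ automatically fixes $z$: the derived subgroup $D'=\langle z\rangle$ has order $2$ and is characteristic in $D$, so $\alpha(z)=z$. The content of the lemma is therefore the uniqueness assertion.

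Next I would show that, up to inversion and conjugation in $\Aut(D)$, there is only one automorphism of order $3$ to consider, namely the map $\alpha_0\colon x\mapsto y,\ y\mapsto x^{-1}y^{-1}$ exhibited in the proof of Lemma~\ref{mnaaut}. Indeed, the restriction homomorphism $\Aut(D)\to\Aut(D/\Phi(D))\cong\GL(2,2)\cong S_3$ has $2$-group kernel, hence $|\Aut(D)|_3=3$ and Sylow's theorem makes all subgroups of order $3$ conjugate, while the two nontrivial elements of such a subgroup are mutually inverse. Since $\Z(D)$ is characteristic and $z$ is fixed by every element of $\Aut(D)$, conjugation by any $\gamma\in\Aut(D)$ maps the fixed subgroup of $\alpha$ in $\Z(D)$ bijectively onto that of $\gamma\alpha\gamma^{-1}$, and $\alpha$ and $\alpha^{-1}$ obviously share their fixed points. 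Thus it suffices to prove the claim for $\alpha=\alpha_0$.

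The final step is a direct computation in $\Z(D)=\langle x^2,y^2,z\rangle$. Using the class-$2$ commutator identity $(ab)^2=[b,a]\,a^2b^2$ together with $[x^{-1},y^{-1}]=z$, I would get $\alpha_0(y^2)=(x^{-1}y^{-1})^2=z\,x^{-2}y^{-2}$, while $\alpha_0(x^2)=y^2$ and $\alpha_0(z)=z$. Writing a general element of $\Z(D)$ as $x^{2a}y^{2b}z^c$ with $a,b\in\mathbb{Z}/2^{r-1}$ and $c\in\mathbb{Z}/2$, and using that $\Z(D)$ is abelian, I obtain
\[
\alpha_0\bigl(x^{2a}y^{2b}z^c\bigr)=x^{-2b}\,y^{2a-2b}\,z^{b+c}.
\]
Equating exponents with those of $x^{2a}y^{2b}z^c$ yields the congruences $a+b\equiv 0\pmod{2^{r-1}}$, $a\equiv 2b\pmod{2^{r-1}}$, and $b\equiv 0\pmod{2}$. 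Substituting the second into the first gives $3b\equiv 0\pmod{2^{r-1}}$, and since $\gcd(3,2)=1$ this forces $b\equiv 0\pmod{2^{r-1}}$, hence also $a\equiv 0\pmod{2^{r-1}}$. The fixed elements of $\alpha_0$ in $\Z(D)$ are therefore precisely $1$ and $z$.

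The only mildly delicate point is the reduction to $\alpha_0$ via the $\GL(2,2)$ argument; the subsequent commutator manipulation and the $\mathbb{Z}/2^{r-1}$-bookkeeping are routine. No case distinction on $r$ is needed, since the computation is uniform for $r\ge 2$.
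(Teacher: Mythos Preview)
Your argument is correct. The reduction to a single Sylow $3$-subgroup of $\Aut(D)$ via the map $\Aut(D)\to\Aut(D/\Phi(D))\cong S_3$ is valid (the kernel is a $2$-group), and the explicit computation of the fixed subgroup of $\alpha_0$ in $\Z(D)=\langle x^2\rangle\times\langle y^2\rangle\times\langle z\rangle$ goes through exactly as you write.

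The paper proceeds differently: it never pins down a particular $\alpha$. Instead it observes that $\alpha$ acts nontrivially on $D/\Z(D)$, so $\alpha(x)\in y\Z(D)\cup xy\Z(D)$, whence $\alpha(x^2)\ne x^2$; this forces $\alpha|_{\Z(D)}$, and in turn the induced automorphism of $\Z(D)/\langle z\rangle\cong C_{2^{r-1}}^2$, to have order~$3$. The paper then invokes the general fact (cited from \cite{Mazurov}) that an automorphism of order~$3$ of a homocyclic abelian $2$-group is fixed-point-free. Your approach trades that external citation for a self-contained linear-algebra computation over $\mathbb{Z}/2^{r-1}$, at the cost of the extra Sylow reduction; the paper's approach is shorter and works uniformly for any $\alpha$ without conjugating, but relies on the quoted lemma.
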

\begin{proof}
Since $D'=\langle z\rangle$, $z$ remains fixed under all automorphisms of $D$. Moreover, $\alpha(x)\in y\Z(D)\cup xy\Z(D)$, because $\alpha$ acts nontrivially on $D/\Z(D)$. In both cases we have $\alpha(x^2)\ne x^2$. This shows that $\alpha|_{\Z(D)}\in\Aut(\Z(D))$ is also an automorphism of order $3$. Obviously $\alpha$ induces an automorphism of order $3$ on $\Z(D)/\langle z\rangle\cong C_{2^{r-1}}^2$. But this automorphism is fixed-point-free (see Lemma~1 in \cite{Mazurov}). The claim follows. 
\end{proof}

Using this, we can find a system of representatives for the conjugacy classes of $B$-subsections.

\begin{Lemma}\label{repsystemrs}
Let $b\in\Bl(RD\C_G(D))$ be a Brauer correspondent of $B$, and for $Q\le D$ let $b_Q$ be the unique block of $RQ\C_G(Q)$ with $(Q,b_Q)\le (D,b)$. We choose a system $\mathcal{S}\subseteq\Z(D)$ of representatives for the orbits of $\Z(D)$ under the action of $\inertial_{\N_G(D)}(b)$. We set $\mathcal{T}:=\mathcal{S}\cup\{y^ix^{2j}:i,j\in\mathbb{Z},\ i\text{ odd}\}$. Then
\begin{equation*}
\bigcup_{a\in\mathcal{T}}{\Bigl\{\bigl(a,b_{\C_D(a)}^{\C_G(a)}\bigr)\Bigr\}}
\end{equation*}
is a system of representatives for the conjugacy classes of $B$-subsections. Moreover,
\[|\mathcal{T}|=\frac{5\cdot2^{2(r-1)}+4}{3}.\]
\end{Lemma}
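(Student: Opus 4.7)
The plan is to reduce the lemma to a calculation of orbits of $\Aut_\mathcal{F}(D)$ acting on $D$, via the standard Alperin--Brou\'e correspondence between $G$-conjugacy classes of $B$-subsections and $\mathcal{F}$-conjugacy classes of elements of $D$, where $\mathcal{F}$ is the fusion system of $B$. By Theorem~\ref{mnafusion} (case $r = s \ge 2$), $\mathcal{F}$ is controlled by $D$, so two elements of $D$ are $\mathcal{F}$-conjugate iff they lie in the same orbit of $\Aut_\mathcal{F}(D) = \inertial_{\N_G(D)}(b)/\C_G(D)$. Since $t(B) = 3$, I would fix $\alpha \in \inertial_{\N_G(D)}(b)$ whose image has order $3$; then $\Aut_\mathcal{F}(D) = \langle \Inn(D),\alpha\rangle$ is a group of order $12$.

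For central elements, $\Inn(D)$ acts trivially on $\Z(D)$, so the orbits on $\Z(D)$ coincide with those of $\langle\alpha\rangle$, equivalently with those of $\inertial_{\N_G(D)}(b)$, and $\mathcal{S}$ is a transversal by definition. Lemma~\ref{uniquefix} yields that $\alpha$ fixes exactly $\{1,z\}$ in $\Z(D)$, so $|\mathcal{S}| = 2 + (2^{2r-1} - 2)/3 = (2^{2r-1} + 4)/3$.

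For non-central elements, the key point is that $\alpha$ induces an order-$3$ automorphism on $D/\Phi(D) = D/\Z(D) \cong C_2^2$ and hence cyclically permutes the three non-trivial cosets $x\Z(D)$, $y\Z(D)$, $xy\Z(D)$. In particular the stabilizer of $y\Z(D)$ in $\Aut_\mathcal{F}(D)$ is $\Inn(D)$, so every non-central $\Aut_\mathcal{F}(D)$-orbit meets $y\Z(D)$ in exactly one $\Inn(D)$-orbit. Since $D' = \langle z\rangle$ is central, commutators are bilinear, and a direct computation gives $[x, y^{1+2j}x^{2i}z^k] = z^{1+2j} = z$ and $[y, y^{1+2j}x^{2i}z^k] = z^{-2i} = 1$. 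Hence the $\Inn(D)$-orbit of $y^{1+2j}x^{2i}z^k$ in $y\Z(D)$ is the pair $\{y^{1+2j}x^{2i}z^k,\ y^{1+2j}x^{2i}z^{k+1}\}$, and taking $k=0$ yields exactly the $2^{r-1}\cdot 2^{r-1} = 2^{2r-2}$ representatives $\{y^i x^{2j} : i\text{ odd}\}$. Combining,
\[|\mathcal{T}| = \frac{2^{2r-1}+4}{3} + 2^{2r-2} = \frac{2\cdot 2^{2r-2} + 3\cdot 2^{2r-2} + 4}{3} = \frac{5\cdot 2^{2(r-1)}+4}{3}.\]

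The main obstacle is the non-central bookkeeping: one has to verify simultaneously that $\alpha$ carries $y\Z(D)$ out of itself (so each $\mathcal{F}$-orbit meets $y\Z(D)$ in precisely one $\Inn(D)$-orbit) and that the proposed representatives are pairwise non-$\mathcal{F}$-conjugate. Both fall out cleanly from the coset-stabilizer argument above, but this requires that $\alpha$ be chosen so its induced action on $D/\Z(D)$ is transparent; any less careful choice of $\alpha$ would force an awkward explicit conjugation computation using the presentation of $D$.
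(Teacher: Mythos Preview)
Your proof is correct and follows essentially the same approach as the paper: both reduce the classification of $B$-subsections to the orbit structure of $\inertial_{\N_G(D)}(b)/\C_G(D)$ (equivalently $\Aut_{\mathcal{F}}(D)$) acting on $D$, and both invoke Lemma~\ref{uniquefix} to handle the central part. The only difference is presentational: the paper delegates the computation to Olsson's Proposition~2.12(ii) and simply corrects his erroneous claim $I_D=\Z(D)$ to $I_D=\mathcal{S}$, whereas you carry out the orbit count explicitly (the coset-stabilizer argument showing that each non-central $\Aut_{\mathcal{F}}(D)$-orbit meets $y\Z(D)$ in a single $\Inn(D)$-orbit, and the commutator computation identifying those orbits as the pairs $\{y^ix^{2j},\,y^ix^{2j}z\}$).
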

\begin{proof}
Proposition~2.12.(ii) in \cite{OlssonRedei} states the desired system wrongly. More precisely the claim $I_D=\Z(D)$ in the proof is false. Indeed Lemma~\ref{uniquefix} shows $I_D=\mathcal{S}$. Now the claim follows easily. 
\end{proof}

From now on we write $b_a:=b_{\C_D(a)}^{\C_G(a)}$ for $a\in\mathcal{T}$.
We are able to determine the difference $k(B)-l(B)$.

\begin{Proposition}\label{kminusl}
We have
\[k(B)-l(B)=\frac{5\cdot 2^{2(r-1)}+7}{3}.\]
\end{Proposition}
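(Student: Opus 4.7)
The plan is to use the identity
\[
k(B)-l(B)=\sum_{\substack{a\in\mathcal{T}\\a\ne 1}}l(b_a),
\]
invoke the explicit representatives from Lemma~\ref{repsystemrs}, and compute $l(b_a)$ according to which of three types the representative $a$ belongs to. Write $\alpha$ for the order-$3$ automorphism of $D$ coming from the inertial group of $B$.

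First, for $a\in\mathcal{S}\setminus\{1,z\}$, I would apply Lemma~\ref{uniquefix}: $\alpha$ does not centralize $a$, so $\alpha\notin\C_G(a)$, the inertial quotient of $b_a$ (a block with defect group $D$) is trivial, $b_a$ is nilpotent and $l(b_a)=1$. Burnside counting gives $|\mathcal{S}|=(2^{2r-1}+4)/3$ (fixed points $1$ and $z$, all other orbits of size $3$), so this type contributes $(2^{2r-1}-2)/3$ to the sum.

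Second, for $a=y^ix^{2j}\in\mathcal{T}\setminus\Z(D)$ with $i$ odd, a short commutator calculation shows $\C_D(a)=M:=\langle x^2,y,z\rangle$, so $b_a$ has abelian defect group $M$. Using the explicit $\alpha\colon x\mapsto y,\ y\mapsto x^{-1}y^{-1}$ one checks directly that $\alpha$ cyclically permutes the three maximal subgroups listed in Lemma~\ref{lemmamna}\eqref{lemmamna4}, so $\alpha$ does not normalise $M$. Because $\mathcal{F}$ is controlled by $D$ (Theorem~\ref{mnafusion}), this forces $\Aut_\mathcal{F}(M)=D/M\cong C_2$, generated by $x$-conjugation. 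Since $xyx^{-1}=yz$, this generator does not fix $a$ for $i$ odd; hence the inertial quotient of $b_a$ is again trivial, $b_a$ is nilpotent, and $l(b_a)=1$. There are $2^{2(r-1)}$ such elements.

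The main obstacle is to show $l(b_z)=3$. Since $z\in D'\cap\Z(D)$, the block $b_z$ (with nonabelian defect group $D$ and inertial quotient $\langle\alpha\rangle\cong C_3$) dominates a block $\overline{b_z}$ of $\C_G(z)/\langle z\rangle$ whose defect group $D/\langle z\rangle\cong C_{2^r}\times C_{2^r}$ is \emph{abelian}; a direct check (or lifting Lemma~\ref{uniquefix}) shows that the induced $\alpha$ acts fixed-point-freely on this homocyclic quotient. I would then invoke the known case of Broué's abelian-defect conjecture for blocks with inertial quotient of order $3$ acting freely (in the spirit of Usami--Puig) to conclude that $\overline{b_z}$ is Morita equivalent to its Brauer correspondent, which by Fong--Reynolds is Morita equivalent to the principal block of $F[(C_{2^r}\times C_{2^r})\rtimes C_3]$; the latter has exactly three $2$-regular conjugacy classes, so $l(\overline{b_z})=3$. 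Since domination preserves the number of irreducible Brauer characters, $l(b_z)=3$. Assembling the three contributions,
\[
k(B)-l(B)=3+\frac{2^{2r-1}-2}{3}+2^{2(r-1)}=\frac{5\cdot 2^{2(r-1)}+7}{3},
\]
as claimed. The hard part is the last step; everything else is essentially bookkeeping around Lemmas~\ref{uniquefix} and~\ref{repsystemrs}.
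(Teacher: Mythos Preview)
Your proposal is correct and follows essentially the same three-case analysis as the paper: nilpotent $b_a$ for $a\in\mathcal{S}\setminus\{1,z\}$ via Lemma~\ref{uniquefix}, nilpotent $b_a$ for $a\notin\Z(D)$ via the fact that no order-$3$ automorphism of $D$ stabilises $M$, and $l(b_z)=3$ via the dominated block $\overline{b_z}$ with abelian defect $C_{2^r}^2$.

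The only substantive difference is how you justify $l(b_z)=3$. The paper simply quotes Theorem~2 of \cite{Sambale} (the companion paper on metacyclic $2$-groups), after observing via Theorem~1.5(iv) of \cite{Olsson} that $t(\overline{b_z})=3$. You instead go through the Usami--Puig perfect isometries for abelian defect with inertial quotient of order~$3$; this is equally valid, though ``Morita equivalent'' is slightly stronger than what those references literally prove (perfect isometry already suffices to transfer $l$). One small redundancy: in your Case~2 you check that the generator $x$ of $\Aut_{\mathcal{F}}(M)\cong C_2$ does not fix $a$, but since the inertial index is automatically odd, the absence of any odd-order automorphism of $M$ in $\mathcal{F}$ already forces $t(b_a)=1$; the extra check is harmless but unnecessary.
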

\begin{proof}
Consider $l(b_a)$ for $1\ne a\in\mathcal{T}$. 

\textbf{Case 1:} $a\in\Z(D)$. \\
Then $b_a$ is a block with defect group $D$. Moreover, $b_a$ and $B$ have a common Brauer correspondent in \linebreak$\Bl(RD\C_{\C_G(a)}(D))=\Bl(RD\C_G(D))$. In case $a\ne z$ we have $t(b_a)=1$ by Lemma~\ref{uniquefix}. Hence, $b_a$ is nilpotent and $l(b_a)=1$. Now let $a=z$. Then there exists a block $\overline{b_z}$ of $\C_G(z)/\langle z\rangle$ with defect group $D/\langle z\rangle\cong C_{2^r}^2$ and $l(\overline{b_z})=l(b_z)$. By Theorem~1.5(iv) in \cite{Olsson}, $t(\overline{b_z})=t(b_z)=3$ holds. Thus, Theorem~2 in \cite{Sambale} implies $l(b_z)=l(\overline{b_z})=3$.

\textbf{Case 2:} $a\notin\Z(D)$.\\
Then $b_{\C_P(a)}=b_M$ is a block with defect group $M:=\langle x^2,y,z\rangle$. Since $b_M^{D\C_G(M)}=b_D^{D\C_G(M)}$, also $b_M^{\C_G(a)}=b_a$ has defect group $M$. For every automorphism $\alpha\in\Aut(D)$ of order $3$ we have $\alpha(M)\ne M$.
Since $D$ controls the fusion of $B$-subpairs, we get $t(b_a)=l(b_a)=1$.

Now the conclusion follows from $k(B)=\sum_{a\in\mathcal{T}}{l(b_a)}$.
\end{proof}

The next result concerns the Cartan matrix of $B$.

\begin{Lemma}\label{cartanrs}
The elementary divisors of the Cartan matrix of $B$ are contained in $\{1,2,|D|\}$. The elementary divisor $2$ occurs twice and $|D|$ occurs once (as usual). In particular $l(B)\ge 3$.
\end{Lemma}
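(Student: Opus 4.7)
The plan is to determine the elementary divisors of the Cartan matrix $C$ of $B$ by combining Brauer's theorem with the theory of lower defect groups, applied to the central subsection $(z,b_z)$. Brauer's theorem accounts for the elementary divisor $|D|$ appearing with multiplicity exactly $1$ (the ``as usual'' part of the claim). What remains is to show that every other elementary divisor of $C$ lies in $\{1,2\}$ and that $2$ occurs with multiplicity exactly $2$.

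I would first analyse the Cartan matrix of $b_z$. Proposition~\ref{kminusl} (and its proof) shows that $b_z$ has defect group $D$ and $l(b_z)=3$, and that $b_z$ dominates a block $\overline{b_z}\in\Bl(R[\C_G(z)/\langle z\rangle])$ with abelian (and in particular metacyclic) defect group $D/\langle z\rangle\cong C_{2^r}\times C_{2^r}$ and inertial index $3$. Theorem~2 in \cite{Sambale} pins down the Cartan matrix of $\overline{b_z}$ completely; a short calculation (exhibited already by the $r=1$ case, which is the Cartan matrix of the principal $2$-block of $A_4$) shows that its elementary divisors are $1,1,|D|/2$. Since the central subgroup $\langle z\rangle$ of order $2$ is collapsed in passing from $b_z$ to $\overline{b_z}$, one has $C(b_z)=2\cdot C(\overline{b_z})$, and therefore the elementary divisors of $C(b_z)$ are $2,2,|D|$.

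Next, I would transfer this information to $C$ by means of the lower defect group formula of \cite{OlssonLDG} (in particular Theorem~7.2), exactly as in the proof of Lemma~\ref{eledivs}. The multiplicity of $2^a$ as an elementary divisor of $C$ equals $\sum_{Q} m_B^{(1)}(Q)$, where $Q$ runs through a system of representatives for the $G$-conjugacy classes of $2$-subgroups of $D$ of order $2^a$, and $m_B^{(1)}(Q)$ can be computed locally from the blocks appearing in the Brauer subpair $(Q,b_Q)$. Brauer's theorem gives $m_B^{(1)}(D)=1$. For every $B$-subpair $(Q,b_Q)$ with $1<|Q|<|D|$ and $Q$ not $G$-conjugate to $\langle z\rangle$, the corresponding local block $b_Q$ is nilpotent by the argument in the proof of Proposition~\ref{kminusl} (which showed nilpotency for both non-central elements and for central elements in $\Z(D)\setminus\langle z\rangle$), and this forces $m_B^{(1)}(Q)=0$. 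The only surviving nontrivial contribution is $m_B^{(1)}(\langle z\rangle)=2$, which is extracted from the elementary divisors $2,2,|D|$ of $C(b_z)$ computed above (with the divisor $|D|$ already attributed to $D$ itself via Brauer).

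The main obstacle is precisely this last step of lower defect group bookkeeping: one has to verify both the vanishing of $m_B^{(1)}(Q)$ for all $Q$ outside the classes of $\langle z\rangle$ and $D$, and the equality $m_B^{(1)}(\langle z\rangle)=2$, which is where the fusion-theoretic information about non-central subsections (nilpotency, $t(b_Q)=1$) must be combined with the Cartan structure of $b_z$ coming from \cite{Sambale}. Once this is in place, $C$ has elementary divisor list $\underbrace{1,\ldots,1}_{l(B)-3},\,2,\,2,\,|D|$, so in particular $l(B)\ge 3$.
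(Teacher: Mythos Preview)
Your overall plan matches the paper's: use the lower defect group machinery of \cite{OlssonLDG}, pin down the elementary divisors of the Cartan matrix of $b_z$ via the dominated block $\overline{b_z}$, and show that every other proper subgroup contributes nothing. The computation for $b_z$ is fine (the paper cites \cite{DetCartan} rather than \cite{Sambale} for the elementary divisors $1,1,|D|/2$, but either works), and the conclusion $m_B^{(1)}(\langle z\rangle)=2$ is obtained the same way.

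The gap is in your vanishing step. You assert that for every $B$-subpair $(Q,b_Q)$ with $1<|Q|<|D|$ and $Q$ not conjugate to $\langle z\rangle$ the block $b_Q$ is nilpotent ``by the argument in the proof of Proposition~\ref{kminusl}''. But that proposition only treats subsections $(a,b_a)$ for \emph{elements} $a\in\mathcal{T}$, i.e.\ it only says something about cyclic $Q$. The lower defect group formula requires you to run over \emph{all} $2$-subgroups $P$ of the relevant order, cyclic or not, and for each Brauer correspondent $b\in\Bl(R\N_G(P))$ to bound $m_b^{(1)}(P)$ via $l(\beta)$ for a suitable $\beta\in\Bl(R\C_G(P))$. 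When $P$ is noncyclic of order $\ge 4$, the defect group $S$ of $\beta$ can be one of several abelian subgroups of $D$ of the form $C_{2^k}\times C_{2^l}\times C_2$, and showing $l(\beta)=1$ then needs a genuine case analysis (using Lemma~\ref{homocaut} when $1,k,l$ are pairwise distinct; a fixed-point argument when $k=l$; and a domination argument through a subgroup $P_1\le P$ with $S/P_1\cong C_4\times C_2$ when $S\cong C_{2^k}\times C_2^2$ and $P$ is noncyclic). None of this is contained in Proposition~\ref{kminusl}. Even for cyclic $P$ one first has to reduce to the situation where $(u,\beta)$ really is a $B$-subsection before invoking that proposition.

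So your strategy is right, but the sentence ``and this forces $m_B^{(1)}(Q)=0$'' hides essentially all of the work; you should expect to spend most of the proof on the case distinction for the defect group $S\le D$ of $\beta$.
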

\begin{proof}
Let $C$ be the Cartan matrix of $B$. As in Lemma~\ref{eledivs} we use the notion of lower defect groups. For this let $P<D$ such that $|P|\ge 4$, and let $b\in\Bl(R\N_G(P))$ be a Brauer correspondent of $B$ with defect group $Q\le D$. Brauer's first main theorem implies $P<Q$. By Proposition~1.3 in \cite{Olsson} there exists a block $\beta\in\Bl(R\C_G(P))$ with $\beta^{\N_G(P)}=b$ such that at most $l(\beta)$ lower defect groups of $b$ contain a conjugate of $P$. Let $S\le Q$ be a defect group of $\beta$. First, we consider the case $S=D$. Then $P\subseteq\Z(D)$. By Lemma~\ref{uniquefix} we have $l(\beta)=1$, since $|P|\ge 4$. It follows that $m^1_b(P)=m_b(P)=0$, because $P$ is contained in the (lower) defect group $Q$ of $b$.

Now assume $S<D$. In particular $S$ is abelian. If $S$ is even metacyclic, then $l(\beta)=1$ and $m^1_b(P)=0$, since $P\subseteq\Z(\C_G(P))$. Thus, let us assume that $S$ is nonmetacyclic. By (3C) in \cite{BrauerBlSec1}, $x^2\in\Z(D)$ is conjugate to an element of $\Z(S)$. This shows $S\cong C_{2^k}\times C_{2^l}\times C_2$ with $k\in\{r,r-1\}$ and $1\le l\le r$. If $1,k,l$ are pairwise distinct, then $l(\beta)=1$ and $m^1_b(P)=0$ follow from Lemma~\ref{homocaut}. Let $k=l$. Then every automorphism of $S$ of order $3$ has only one nontrivial fixed-point. Since $|P|\ge 4$, it follows again that $l(\beta)=1$ and $m^1_b(P)=0$. 

Now let $S\cong C_{2^k}\times C_2^2$ with $2\le k\in\{r-1,r\}$. 
Assume first that $P$ is noncyclic. Then $S/P$ is metacyclic. If $S/P$ is not a product of two isomorphic cyclic groups, then $l(\beta)=1$ and $m^1_b(P)=0$. Hence, we may assume $S/P\cong C_2^2$. It is easy to see that there exists a subgroup $P_1\le P$ with $S/P_1\cong C_4\times C_2$. We get $l(\beta)=1$ and $m^1_b(P)=0$ also in this case.

Finally, let $P=\langle u\rangle$ be cyclic. Then $(u,\beta)$ is a $B$-subsection. Since $|P|\ge 4$, $u$ is not conjugate to $z$. As in the proof of Proposition~\ref{kminusl} we have $l(\beta)=1$ and $m^1_b(P)=0$. This shows $m^1_B(P)=0$. Since $P$ was arbitrary, the multiplicity of $|P|$ as an elementary divisor of $C$ is $0$.

It remains to consider the case $|P|=2$. We write $P=\langle u\rangle\le D$. As before let $b\in\Bl(R\N_G(P))$ be a Brauer correspondent of $B$. Then $(u,b)$ is a $B$-subsection. If $(u,b)$ is not conjugate to $(z,b_z)$, then $l(b)=1$ and $m^1_b(P)=0$ as in the proof of Proposition~\ref{kminusl}. Since we can replace $P$ by a conjugate, we may assume $P=\langle z\rangle$ and $(u,b)=(z,b_z)$. Then $l(b)=3$ and $D$ is a defect group of $b$. Now let $\overline{b}\in\Bl(R[\N_G(P)/P])$ be the block which is dominated by $b$. By Corollary~1 in \cite{DetCartan} the elementary divisors of the Cartan matrix of $\overline{b}$ are $1,1,|D|/2$. Hence, the elementary divisors of the Cartan matrix of $b$ are $2,2,|D|$. This shows
\[2=\sum_{\substack{Q\in\mathcal{P}(\N_G(P)),\\|Q|=2}}{m^1_b(Q)},\]
where $\mathcal{P}(\N_G(P))$ is a system of representatives for the conjugacy classes of $p$-subgroups of $\N_G(P)$. The same arguments applied to $b$ instead of $B$ imply $m^1_b(Q)=0$ for $P\ne Q\le\N_G(P)$ with $|Q|=2$. Hence, $2=m^1_b(P)=m^1_B(P)$, and $2$ occurs as elementary divisors of $C$ twice.
\end{proof}

As in Section~\ref{secrs1} we write $\IBr(b_u)=\{\phi_u\}$ for $u\in\mathcal{T}\setminus\langle z\rangle$. In a similar manner we define the integers $a^u_i$. If $u\in\mathcal{T}\setminus\langle z\rangle$ with $|\langle u\rangle|=2^k>2$, then the $2^{k-1}$ distinct subsections of the form $^\gamma(u,b_u)$ for $\gamma\in\mathcal{G}$ are pairwise nonconjugate (same argument as in the case $r>s=2$). Hence, Lemma~\ref{orthogonal} carries over in a corresponding form. Apart from that we can also carry over Lemma~(6.B) in \cite{Kuelshammerwr}:

\begin{Lemma}\label{heightzeroodd}
Let $\chi\in\Irr(B)$ and $u\in\mathcal{T}\setminus\Z(D)$. Then $\chi$ has height $0$ if and only if the sum \[\sum_{i=0}^{2^{r-1}-1}{a_i^u(\chi)}\] 
is odd.
\end{Lemma}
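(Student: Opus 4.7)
The plan is to mimic the reduction–mod–$\mathfrak{p}$ argument behind Lemma~(6.B) of \cite{Kuelshammerwr}. Since $u=y^ix^{2j}$ with $i$ odd and $y,x^{2j}$ commute, $u$ has order exactly $2^r$; by the proof of Proposition~\ref{kminusl}, $\C_D(u)=M=\langle x^2,y,z\rangle$, $b_u$ has defect group $M$, $l(b_u)=1$, and $|D:M|=2$. Put $\zeta:=\zeta_{2^r}$ and $\pi:=1-\zeta$, so $(\pi)^{2^{r-1}}=(2)$ in $\mathbb{Z}[\zeta]$ and the reduction $\mathbb{Z}[\zeta]\to\mathbb{Z}[\zeta]/(\pi)\cong\mathbb{F}_2$ sends $\zeta\mapsto 1$. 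Because $\{1,\zeta,\ldots,\zeta^{2^{r-1}-1}\}$ is a $\mathbb{Z}$-basis of $\mathbb{Z}[\zeta]$, we get
\[
\sum_{i=0}^{2^{r-1}-1}a_i^u(\chi)\;\equiv\;d^u_{\chi\phi_u}\pmod{\pi},
\]
and the lemma becomes equivalent to: $\pi\nmid d^u_{\chi\phi_u}$ iff $h(\chi)=0$.

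For the direction $h(\chi)\ge 1\Rightarrow\pi\mid d^u_{\chi\phi_u}$, I would establish the stronger divisibility $2^{h(\chi)}\mid d^u_{\chi\phi_u}$ in $\mathbb{Z}[\zeta]$. This is the non-central analog of Lemma~\ref{neukirchapp222} (which in turn adapts Lemma~(6.E) of \cite{Kuelshammerwr}); the only ingredients needed are $l(b_u)=1$ together with the Cartan invariant $|M|$ of $b_u$. Expanding in the basis above, this forces $2\mid a_i^u(\chi)$ for every $i$, so the sum is automatically even. Equivalently, using $2=(1-\zeta)^{2^{r-1}}\cdot(\text{unit})$, the divisibility by $2$ a fortiori gives divisibility by $\pi$.

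For the converse $h(\chi)=0\Rightarrow\pi\nmid d^u_{\chi\phi_u}$, I would use the contribution
\[
m^{(u,b_u)}_{\chi\chi}\;=\;|M|^{-1}\,d^u_{\chi\phi_u}\,\overline{d^u_{\chi\phi_u}},
\]
coming from $l(b_u)=1$ with Cartan invariant $|M|$, together with (5H) in \cite{BrauerBlSec2} applied to this subsection. The conclusion is that for a height-$0$ character $\chi$, $d^u_{\chi\phi_u}\,\overline{d^u_{\chi\phi_u}}$ is a $\mathfrak{p}$-unit, and hence so is $d^u_{\chi\phi_u}$; in particular $\pi\nmid d^u_{\chi\phi_u}$, so the sum is odd. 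The main obstacle is the careful calibration of valuations in (5H): the earlier uses in Section~\ref{secrs1} applied it where the Brauer correspondent had the \emph{full} defect group $D$, whereas here $|D:M|=2$, so the extra factor $|D|/|M|=2$ appearing in $|D|\,m^{(u,b_u)}_{\chi\chi}=2\,d^u_{\chi\phi_u}\,\overline{d^u_{\chi\phi_u}}$ has to be tracked through the valuation bookkeeping before the $\mathfrak{p}$-unit conclusion can be read off.
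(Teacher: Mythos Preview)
Your reduction to the equivalence $\pi\nmid d^u_{\chi\phi_u}\Leftrightarrow h(\chi)=0$ is correct, and the implication $h(\chi)\ge1\Rightarrow\pi\mid d^u_{\chi\phi_u}$ is essentially the paper's argument via (5G) of \cite{BrauerBlSec2}. Note, however, that your stronger claim $2^{h(\chi)}\mid d^u_{\chi\phi_u}$ does \emph{not} follow from the argument behind Lemma~\ref{neukirchapp222}: since the Cartan invariant of $b_u$ is $|M|=|D|/2$ rather than $|D|$, the same contribution estimate only yields $\nu(d^u_{\chi\phi_u})\ge h(\chi)-\tfrac12$. This weaker bound still gives $\pi\mid d^u_{\chi\phi_u}$ once $h(\chi)\ge1$, which is all the lemma needs.

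The genuine gap is in the converse. The obstacle you flag is fatal, not a matter of bookkeeping: for $u\notin\Z(D)$ one has $|D|\,m^{(u,b_u)}_{\chi\chi}=2\,d^u_{\chi\phi_u}\overline{d^u_{\chi\phi_u}}$, and since $d^u_{\chi\phi_u}\in\mathbb{Z}[\zeta]$ this quantity always has $\nu\ge1$. Hence no application of (5H) to this subsection can yield $\nu\bigl(|D|\,m^{(u,b_u)}_{\chi\chi}\bigr)=0$, and the argument cannot be completed along these lines. (Compare Section~\ref{secrs1}, where one finds $|D|\,m^{(u,b_u)}_{\chi\chi}=2$ for height-$0$ characters and non-central $u$.) The paper bypasses this entirely by quoting Proposition~1 of \cite{BroueSanta}, which asserts directly that for a height-$0$ character $\chi$ and \emph{any} $B$-subsection $(u,b_u)$ there exists $\phi\in\IBr(b_u)$ with $d^u_{\chi\phi}\not\equiv0\pmod{\mathfrak p}$; since $l(b_u)=1$ here, this gives $\pi\nmid d^u_{\chi\phi_u}$ at once.
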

\begin{proof}
If $\chi$ has height $0$, the sum is odd by Proposition~1 in \cite{BroueSanta}. The other implication follows easily from (5G) in \cite{BrauerBlSec2}.
\end{proof}

The next lemma is the analogon to Lemma~\ref{neukirchapp222}.

\begin{Lemma}\label{neukirchapp}
Let $u\in\Z(D)\setminus\langle z\rangle$ of order $2^k$. Then for all $\chi\in\Irr(B)$ we have:
\begin{enumerate}[(i)]
\item $2^{h(\chi)}\mid a_i^u(\chi)$ for $i=0,\ldots,2^{k-1}-1$,\label{neukirchapp1}
\item $\sum\limits_{i=0}^{2^{k-1}-1}{a_i^u(\chi)}\equiv 2^{h(\chi)}\pmod{2^{h(\chi)+1}}$.\label{neukirchapp2}
\end{enumerate}
\end{Lemma}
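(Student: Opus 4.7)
The plan is to reduce the statement to Lemma~(6.E) in \cite{Kuelshammerwr}, in exactly the same way as the analogous Lemma~\ref{neukirchapp222} of the previous section was obtained. That lemma of Külshammer yields both divisibility assertions (i) and (ii) for any subsection $(u,b_u)$ with $u$ a central $p$-element of the defect group and $l(b_u)=1$. So the only thing I really need to verify is that the subsection $(u,b_u)$ here satisfies these hypotheses.

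First I would check that $b_u$ has defect group $D$ and that $l(b_u)=1$. Since $u\in\Z(D)$, the inclusion $(\langle u\rangle,b_u)\le(D,b)$ gives $\C_D(u)=D$, so $b_u$ has defect group $D$ and shares a Brauer correspondent with $B$ in $\Bl(RD\C_G(D))$. The crucial input is Lemma~\ref{uniquefix}: it implies that every nontrivial automorphism of $D$ of order $3$ moves $u$ (because $u\in\Z(D)\setminus\langle z\rangle$). Consequently the inertial quotient of $b_u$ contains no element of order~$3$, so $t(b_u)=1$. Thus $b_u$ is nilpotent and $l(b_u)=1$. This is precisely the argument used in Case~1 of the proof of Proposition~\ref{kminusl}.

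With $l(b_u)=1$ in hand, Lemma~(6.E) in \cite{Kuelshammerwr} applies verbatim to the subsection $(u,b_u)$ and yields both (i) and (ii). Concretely, the ingredients entering Külshammer's proof are: the Cartan matrix of $b_u$ is the $1\times 1$ matrix $(|D|)$, so Brauer's contribution formulas (5G) and (5H) in \cite{BrauerBlSec2} give
\[
\nu\bigl(d^u_{\chi\phi_u}\overline{d^u_{\chi\phi_u}}\bigr)=\nu\bigl(|D|m^{(u,b_u)}_{\chi\chi}\bigr)=2h(\chi);
\]
because the rational prime $2$ is totally ramified in $\mathbb{Z}[\zeta_{2^k}]$ with ramification index $2^{k-1}$ and uniformizer $1-\zeta_{2^k}$, this forces $d^u_{\chi\phi_u}\in 2^{h(\chi)}\mathbb{Z}[\zeta_{2^k}]$, which is equivalent to (i) since the $\zeta_{2^k}^i$ ($0\le i<2^{k-1}$) form a $\mathbb{Z}$-basis of $\mathbb{Z}[\zeta_{2^k}]$. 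For (ii), Külshammer's argument relates the augmentation $\sum_i a_i^u(\chi)$ (the image of $d^u_{\chi\phi_u}$ under $\zeta_{2^k}\mapsto 1$) to an ordinary decomposition number of $B$ modulo a suitable power of $(1-\zeta_{2^k})$, and then uses the description of the $2$-adic valuation of the latter via the height of $\chi$.

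The only conceptual step is the reduction $l(b_u)=1$, and this is already delivered by Lemma~\ref{uniquefix}; the rest is a direct appeal to \cite{Kuelshammerwr}. Thus no new obstacle arises compared with the proof of Lemma~\ref{neukirchapp222}, and the proof can be kept to a single sentence invoking Lemma~(6.E) of \cite{Kuelshammerwr} after citing Lemma~\ref{uniquefix} and the argument of Proposition~\ref{kminusl}.
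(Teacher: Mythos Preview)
Your proposal is correct and follows exactly the route the paper intends: the paper gives no proof for this lemma, merely introducing it as ``the analogon to Lemma~\ref{neukirchapp222}'', which in turn is obtained by carrying over Lemma~(6.E) in \cite{Kuelshammerwr}. Your reduction---using Lemma~\ref{uniquefix} (equivalently, Case~1 of Proposition~\ref{kminusl}) to get $l(b_u)=1$ and then invoking Külshammer's result---is precisely the argument the paper has in mind.
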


As in the case $r>s=1$, Lemma~1.1 in \cite{Robinson} implies
\begin{equation}\label{kBleD}
k(B)\le\sum_{i=0}^{\infty}{2^{2i}k_i(B)}\le|D|.
\end{equation}
In particular Brauer's $k(B)$-conjecture holds. Moreover, Theorem~3.1 in \cite{Robinson} gives $k_0(B)\le|D|/2=|D:D'|$, i.\,e. Olsson's conjecture is satisfied. Using this, we can improve the inequality~\eqref{kBleD} to
\[|D|\ge k_0(B)+4(k(B)-k_0(B))=4k(B)-3k_0(B)\ge 4k(B)-\frac{3|D|}{2}\]
and
\[\frac{5\cdot 2^{2(r-1)}+16}{3}\le k(B)-l(B)+l(B)=k(B)\le\frac{5|D|}{8}=5\cdot 2^{2(r-1)}.\]

We will improve this further. Let $\overline{b_z}$ be the block of $\Bl(R\C_G(z)/\langle z\rangle)$ which is dominated by $b_z$. Then $\overline{b_z}$ has defect group $D/\langle z\rangle\cong C_{2^r}^2$. Using the existence of a perfect isometry (see \cite{Usami23I,Usami23II,UsamiZ4}), one can show that the Cartan matrix of $\overline{b_z}$ is equivalent to
\[\overline{C}:=\frac{1}{3}\begin{pmatrix}2^{2r}+2&2^{2r}-1&2^{2r}-1\\[1mm]2^{2r}-1&2^{2r}+2&2^{2r}-1\\[1mm]2^{2r}-1&2^{2r}-1&2^{2r}+2\end{pmatrix}.\]
Hence, the Cartan matrix of $b_z$ is equivalent to $2\overline{C}$. Now inequality $(\ast\ast)$ in \cite{KuelshammerWada} yields
\[k(B)\le 2\frac{2^{2r}+8}{3}=\frac{|D|+16}{3}.\]
(Notice that the proof of Theorem~A in \cite{KuelshammerWada} also works for $b_z$ instead of $B$, since the generalized decomposition numbers corresponding to $(z,b_z)$ are integral. See also Lemma~3 in \cite{SambalekB}.)

In addition we have 
\[k_i(B)=0\text{ for }i\ge 4\]
by Corollary~(6D) in \cite{BrauerApp4}.
This means that the heights of the characters in $\Irr(B)$ are bounded independently of $r$.
We remark also that Alperin's weight conjecture is equivalent to
\[l(B)=l(b)\] 
for the Brauer correspondent $b\in\Bl(R\N_G(D))$ of $B$ (see Consequence~5 in \cite{Alperinweights}). Since $z\in\Z(\N_G(D))$,  
$l(B)=l(b)=3$ and $k(B)=(5\cdot 2^{2(r-1)}+16)/3$ would follow in this case (see proof of Proposition~\ref{kminusl}).

\subsection{The gluing problem}
As in section~\ref{gluginsubsec} we use the notations of \cite{Parkgluing}.

\begin{Theorem}
 The gluing problem for $B$ has a unique solution.
\end{Theorem}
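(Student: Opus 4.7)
The plan is to adapt the approach of Subsection~\ref{gluginsubsec}, with one extra twist needed because $t(B)=3$ introduces an order-$3$ automorphism on $D$. First I would identify the $\mathcal{F}$-centric subgroups of $D$. Since $D$ is minimal nonabelian, a proper subgroup $Q<D$ is $\mathcal{F}$-centric if and only if $Q$ is maximal abelian in $D$, and by Lemma~\ref{lemmamna}\eqref{lemmamna4} these are precisely the three maximal subgroups $M_1,M_2,M_3$. Because $\mathcal{F}$ is controlled by $D$ by Theorem~\ref{mnafusion} and the order-$3$ element of $\Aut_\mathcal{F}(D)$ provided by $t(B)=3$ cyclically permutes $\{M_1,M_2,M_3\}$, the $\mathcal{F}$-conjugacy classes of centric subgroups are represented by $D$ and $M:=M_1$.

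Next I would compute the automorphism groups. Since $\Inn(D)\cong D/\Z(D)\cong C_2^2$ and $t(B)=3$ yields $|\Aut_\mathcal{F}(D)|=12$ with a nontrivial $C_3$-action on $\Inn(D)$, we obtain $\Aut_\mathcal{F}(D)\cong A_4$. The stabilizer of $M$ in this $A_4$ is $\Inn(D)\cong V_4$, and since $\C_D(M)=M$, restriction to $M$ yields $\Aut_\mathcal{F}(M)\cong D/M\cong C_2$. For the chain $\sigma:M<D$ we then have $\Aut_\mathcal{F}(\sigma)\cong V_4$. As $F^\times$ is uniquely $2$-divisible in characteristic $2$, $\cohom^i(H,F^\times)=0$ for $i\ge 1$ whenever $H$ is a $2$-group, which kills the contributions of $\Aut_\mathcal{F}(M)$ and $\Aut_\mathcal{F}(\sigma)$. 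For $A_4$ the Schur multiplier $C_2$ vanishes after tensoring with $F^\times$, so $\cohom^2(A_4,F^\times)=0$, but $\cohom^1(A_4,F^\times)=\Hom(C_3,F^\times)\cong C_3$ is nontrivial.

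This nonvanishing is the main obstacle and means the direct argument from Subsection~\ref{gluginsubsec} does not carry over verbatim. I would finish by computing the relevant higher limits directly from the poset structure of $[S(\mathcal{F}^c)]$: up to $\mathcal{F}$-conjugacy it contains only the three chains $\{D\}$, $\{M\}$, $\{M<D\}$, and the nontrivial strict inclusions are $\{D\}\subset\{M<D\}$ and $\{M\}\subset\{M<D\}$. In the normalized cochain complex for $\cohom^*([S(\mathcal{F}^c)],\mathcal{A}^1_\mathcal{F})$, the term $C^0$ reduces to $\mathcal{A}^1_\mathcal{F}(\{D\})\cong C_3$ (the other two summands vanish by the previous paragraph), while $C^1$ is a sum of two copies of $\mathcal{A}^1_\mathcal{F}(\{M<D\})=0$. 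Hence $\cohom^1([S(\mathcal{F}^c)],\mathcal{A}^1_\mathcal{F})=0$. Since $\mathcal{A}^2_\mathcal{F}$ vanishes on every chain, also $\cohom^0([S(\mathcal{F}^c)],\mathcal{A}^2_\mathcal{F})=0$, and Theorem~1.1 in \cite{Parkgluing} then yields the unique solution.
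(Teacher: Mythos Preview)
Your proposal is correct and follows essentially the same route as the paper: you identify the $\mathcal{F}$-centric classes as $D$ and $M$, compute $\Aut_\mathcal{F}(D)\cong A_4$ and $\Aut_\mathcal{F}(M)\cong C_2$, observe that $\mathcal{A}^2_\mathcal{F}$ vanishes everywhere while $\mathcal{A}^1_\mathcal{F}$ is nonzero only at the chain $\{D\}$ (where it is $C_3$), and then deduce $\cohom^1([S(\mathcal{F}^c)],\mathcal{A}^1_\mathcal{F})=0$ before invoking Park's Theorem~1.1. The only difference is cosmetic: in the last step the paper phrases the computation via derivations on the incidence algebra (Webb, Lemma~6.2(2)), showing that any derivation vanishes on all morphisms and in particular on $\mathrm{id}_D$, whereas you obtain the same vanishing by noting that the normalized $C^1$ is concentrated at $\mathcal{A}^1_\mathcal{F}(\{M<D\})=0$.
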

\begin{proof}
Let $\mathcal{F}$ be the fusion system induced by $B$. Then the $\mathcal{F}$-centric subgroups of $D$ are given by $M:=\langle x^2,y,z\rangle$ and $D$ (up to conjugation in $\mathcal{F}$). We have $\Aut_{\mathcal{F}}(M)\cong D/M\cong C_2$ and $\Aut_{\mathcal{F}}(D)\cong A_4$. This shows $\cohom^2(\Aut_{\mathcal{F}}(\sigma),F^\times)=0$ for every chain $\sigma$ of $\mathcal{F}$-centric subgroups. Consequently, $\cohom^0([S(\mathcal{F}^c)],\mathcal{A}^2_{\mathcal{F}})=0$. On the other hand, we have $\cohom^1(\Aut_{\mathcal{F}}(D),F^\times)\cong\cohom^1(C_3,F^\times)\cong C_3$ and $\cohom^1(\Aut_{\mathcal{F}}(\sigma),F^\times)=0$ for all chains $\sigma\ne D$. Hence, the situation is as in Case~3 of the proof of Theorem~1.2 in \cite{Parkgluing}. However, the proof in \cite{Parkgluing} is pretty short. For the convenience of the reader, we give a more complete argument. 

Since $[S(\mathcal{F}^c)]$ is partially orderd by taking subchains, one can view $[S(\mathcal{F}^c)]$ as a category, where the morphisms are given by the pairs of ordered chains. In particular $[S(\mathcal{F}^c)]$ has exactly five morphisms. With the notations of \cite{Webb} the functor $\mathcal{A}_{\mathcal{F}}^1$ is a \emph{representation} of $[S(\mathcal{F}^c)]$ over $\mathbb{Z}$. Hence, we can view $\mathcal{A}_{\mathcal{F}}^1$ as a module $\mathcal{M}$ over the incidence algebra of $[S(\mathcal{F}^c)]$. More precisely, we have
\[\mathcal{M}:=\bigoplus_{a\in\Ob[S(\mathcal{F}^c)]}{\mathcal{A}_{\mathcal{F}}^1(a)}=\mathcal{A}_{\mathcal{F}}^1(D)\cong C_3.\]
Now we can determine $\cohom^1([S(\mathcal{F}^c)],\mathcal{A}_{\mathcal{F}}^1)$ using Lemma~6.2(2) in \cite{Webb}. For this let $d:\Hom[S(\mathcal{F}^c)]\to\mathcal{M}$ a derivation. Then we have $d(\alpha)=0$ for all $\alpha\in\Hom[S(\mathcal{F}^c)]$ with $\alpha\ne(D,D)=:\alpha_1$. However, \[d(\alpha_1)=d(\alpha_1\alpha_1)=(\mathcal{A}_{\mathcal{F}}^1(\alpha_1))(d(\alpha_1))+d(\alpha_1)=2d(\alpha_1)=0.\] 
Hence, $\cohom^1([S(\mathcal{F}^c)],\mathcal{A}^1_{\mathcal{F}})=0$.
\end{proof}

\subsection{Special cases}

Since the general methods do not suffice to compute the invariants of $B$, we restrict ourself to certain special situations.

\begin{Proposition}\label{O2G}
If $\pcore_2(G)\ne 1$, then
\[k(B)=\frac{5\cdot 2^{2(r-1)}+16}{3},\hspace{1cm}k_0(B)\ge\frac{2^{2r}+8}{3},\hspace{1cm}l(B)=3.\]
\end{Proposition}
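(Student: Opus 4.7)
The plan is to show $l(B) = 3$; from this, Proposition~\ref{kminusl} immediately yields the claimed value of $k(B)$, and the lower bound on $k_0(B)$ falls out of the reduction. The strategy is to reduce to the situation $z \in \Z(G)$ and then exploit the known invariants of blocks with abelian defect $C_{2^r}\times C_{2^r}$ and inertial index~$3$.

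For the reduction: since $\pcore_2(G) \ne 1$ is a normal $2$-subgroup, $\pcore_2(G) \le D$, and after a standard Morita/Fong-type reduction (which preserves $D$ as well as $k(B)$, $k_0(B)$ and $l(B)$) we may assume $\pcore_2(G) \le \Z(G) \cap D\subseteq \Z(D)$. Pick $g \in \N_G(D)$ inducing the order-$3$ automorphism $\alpha$ on $D$; such $g$ exists because $t(B)=3$. Since $g$ centralizes $\Z(G)$, it fixes $\pcore_2(G)$ pointwise, so $\alpha$ fixes every element of $\pcore_2(G) \subseteq \Z(D)$. Lemma~\ref{uniquefix} shows that the only nontrivial element of $\Z(D)$ fixed by $\alpha$ is $z$, so $\pcore_2(G) = \langle z\rangle$; in particular $z \in \Z(G)$.

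Now set $\overline{G}:=G/\langle z\rangle$ and let $\overline{B}$ be the block of $\overline{G}$ dominated by $B$. Its defect group is $\overline{D}:=D/\langle z\rangle \cong C_{2^r} \times C_{2^r}$, which is abelian, and the inertial quotient $C_3$ acts fixed-point-freely on $\overline{D}$. Theorem~2 in \cite{Sambale} then gives
\[
l(\overline{B}) = 3, \qquad k(\overline{B}) = k_0(\overline{B}) = \frac{2^{2r}+8}{3}.
\]
Transferring back: because $F$ has characteristic $2$ and $\langle z\rangle\lhd G$ is a $2$-subgroup, $\langle z\rangle$ lies in the kernel of every simple $FG$-module, so $\IBr(B)$ is canonically identified with $\IBr(\overline{B})$ and $l(B)=3$. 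Combined with Proposition~\ref{kminusl} this yields
\[
k(B) = 3 + \frac{5\cdot 2^{2(r-1)}+7}{3} = \frac{5\cdot 2^{2(r-1)}+16}{3}.
\]
Finally, the height-$0$ characters of $\overline{B}$ inflate to characters of $B$ trivial on $\langle z\rangle$; since $\nu(|G:D|)=\nu(|\overline{G}:\overline{D}|)$, the inflations retain height $0$, giving $k_0(B) \ge (2^{2r}+8)/3$.

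The principal delicacy is the opening step, namely arranging $\pcore_2(G) \le \Z(G)$ without altering the relevant invariants of $B$; this is where the hypothesis $\pcore_2(G)\ne 1$ really enters the argument. Once this reduction is granted, everything else is a transparent combination of Lemma~\ref{uniquefix}, Theorem~2 in \cite{Sambale}, and the standard bookkeeping for the dominated block $\overline{B}$.
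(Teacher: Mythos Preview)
Your endgame matches the paper: once $z\in\Z(G)$, read off the invariants from the dominated block $\overline{B}$ with abelian defect $C_{2^r}^2$ via Theorem~2 of \cite{Sambale}, then invoke Proposition~\ref{kminusl}. The problem is the opening step, which you yourself flag as the ``principal delicacy'': there is no off-the-shelf Morita/Fong-type reduction that makes a normal $2$-subgroup central while preserving $D$, $k(B)$, $k_0(B)$ and $l(B)$. Fong's reductions treat $\pcore_{2'}(G)$, not $\pcore_2(G)$, and since $RQ$ has a unique block, Fong--Reynolds is vacuous here. Without a concrete mechanism this step is not a delicacy but a genuine gap---and it is precisely where the hypothesis $\pcore_2(G)\ne 1$ has to do real work.

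The paper supplies the missing ingredient via the K\"ulshammer--Puig theorem on extensions of nilpotent blocks \cite{exnilblocks}. Set $Q:=\pcore_2(G)\le D$. If $Q=D'=\langle z\rangle$, then $\C_G(z)=\N_G(Q)=G$ and $B=b_z$ directly. Otherwise (and after disposing of the easy case $Q=D$) the paper takes a $B$-subpair $(Q,b_Q)$ and shows that $b_Q$ is \emph{nilpotent}: its defect group is either $D$ or $M$, and in the first case $Q\subseteq\Z(D)$ with $Q\not\subseteq\langle z\rangle$, so Lemma~\ref{uniquefix} forces $t(b_Q)=1$; in the second case $t(b_Q)=1$ because $D$ controls fusion and no order-$3$ automorphism of $D$ stabilises $M$. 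Now K\"ulshammer--Puig replaces $B$ by a Morita-equivalent block with \emph{normal} defect group $D$, whence $\langle z\rangle=D'\unlhd G$ and we are back in the first case. So the reduction you need is available, but it is the nilpotency check on $b_Q$ together with the K\"ulshammer--Puig machinery---not a routine Fong argument.
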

\begin{proof}
Let $1\ne Q:=\pcore_2(G)$. Then $Q\subseteq D$. In the case $Q=D'$ we have $\C_G(z)=\N_G(Q)=G$ and $B=b_z$. Then the assertions on $k(B)$ and $l(B)$ are clear. Moreover, $b_z$ dominates a block $\overline{b_z}\in\Bl(R\C_G(z)/\langle z\rangle)$ with defect group $C_{2^r}^2$. By Theorem~2 in \cite{Sambale} we have 
\[k_0(B)\ge k_0(\overline{b_z})=k(\overline{b_z})=\frac{2^{2r}+8}{3}.\] 
Hence, we may assume $Q\ne D'$. With the same argument we may also assume $Q<D$. In particular $Q$ is abelian. We consider a $B$-subpair $(Q,b_Q)$.  
Then $D$ or $M$ is a defect group of $b_Q$ (see proof of Lemma~\ref{repsystemrs}).
If $D$ is a defect group of $b_Q$, then $D\subseteq\C_G(Q)$ and $Q\subseteq\Z(D)$. By Lemma~\ref{uniquefix} it follows that $b_Q$ is nilpotent.

Now let us assume that $M$ is a defect group of $b_Q$. Since $D$ controls the fusions of $B$-subpairs, we have $t(b_Q)=1$ (see Case~2 in the proof of Proposition~\ref{kminusl}). Hence, again $b_Q$ is nilpotent. Thus, in both cases $B$ is an extension of a nilpotent block of $\Bl(R\C_G(Q))$. In this situation the Külshammer-Puig theorem applies. In particular we can replace $B$ by a block with normal defect group (see \cite{exnilblocks}). Hence, $B=b_z$, and the claim follows as before.
\end{proof}

Since $\N_G(D)\subseteq\C_G(z)$, $B$ is a “centrally controlled block” (see \cite{KuelshammerOkuyama}). In \cite{KuelshammerOkuyama} it was shown that then an epimorphism $\Z(B)\to\Z(b_z)$ exists, where one has to regard $B$ (resp. $b_z$) as blocks of $FG$ (resp. $F\C_G(z)$). Moreover, we conjecture that the blocks $B$ and $b_z$ are Morita-equivalent. For the similar defect group $Q_8$ this holds in fact (see \cite{KessarLinckelmanntame}). In this context the work \cite{Cabanes} is also interesting. There is was shown that there is a perfect isometry between any two blocks with the same quaternion group as defect group and the same fusion of subpairs. 
Thus, it would be also possible that there is a perfect isometry between $B$ and $b_z$.

\begin{Proposition}\label{OGreduction}
In order to determine $k(B)$ (and thus also $l(B)$), we may assume that $\pcore_2(G)$ is trivial and $\pcore_{2'}(G)=\Z(G)=\F(G)$ is cyclic. Moreover, we can assume that $G$ is an extension of a solvable group by a quasisimple group. In particular $G$ has only one nonabelian composition factor.
\end{Proposition}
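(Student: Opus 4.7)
The plan chains three classical block-theoretic reductions, each preserving the defect group $D$ and the numbers $k(B)$ and $l(B)$. Proposition~\ref{O2G} already settles the case $\pcore_2(G)\ne 1$, so we may assume $\pcore_2(G)=1$.

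For the second reduction, set $N:=\pcore_{2'}(G)$ and let $b\in\Bl(RN)$ be a block covered by $B$. The Fong-Reynolds theorem replaces $(G,B)$ by $(\inertial_G(b),B')$, after which $b$ is $G$-invariant. Fong's second reduction then produces a central extension $(\hat G,\hat B)$ in which $N$ is identified with a cyclic central $2'$-subgroup of $\hat G$, again preserving defect group and invariants. Iterating the first two steps (the order of $G$ strictly decreases whenever a reduction is nontrivial), we reach a situation with $\pcore_2(G)=1$ and $\pcore_{2'}(G)\le\Z(G)$. Because $\pcore_2(G)=1$, the centre $\Z(G)$ is a $2'$-group, so $\Z(G)\le\pcore_{2'}(G)$, forcing $\Z(G)=\pcore_{2'}(G)=\F(G)$, which is cyclic by construction.

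Third, I would reduce to a single nonabelian composition factor. We have $\F^*(G)=\Z(G)\E(G)$ with $\C_G(\F^*(G))\le\F^*(G)$. If $\E(G)=1$, then $G=\Z(G)$ is abelian and $B$ is nilpotent, contradicting our standing hypothesis. Decompose $\E(G)=L_1\cdots L_n$ into its quasisimple components, permuted by $G$. The key observation is that if the components were to split into two nontrivial $G$-invariant subfamilies, the defect group of the corresponding covered block of $\E(G)\Z(G)$ would decompose as a central product accordingly, which is incompatible with the indecomposability of the minimal nonabelian $2$-group $D$ (whose commutator subgroup $\langle z\rangle$ has order $2$ and cannot be distributed across two proper central factors). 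Hence the $L_i$ form a single $G$-orbit, and one further Fong-Reynolds passage to the stabilizer $\N_G(L_1)$ reduces $n$ to $1$. With $\E(G)$ quasisimple and $\C_G(\E(G))=\Z(G)$, the quotient $G/\Z(G)$ embeds into $\Aut(\E(G))$, and Schreier's conjecture yields a normal solvable subgroup of $G$ with quasisimple quotient, giving a single nonabelian composition factor.

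I expect the principal obstacle to lie in step three: making precise the claim that a $G$-invariant splitting of the layer forces an incompatible splitting of $D$, and verifying that the Fong-Reynolds reduction to $\N_G(L_1)$ truly preserves both $k(B)$ and $l(B)$. The first two steps are essentially routine from the classical Fong theory, but the structural analysis of the components relative to the minimal nonabelian defect group requires genuine care, likely invoking Lemma~\ref{lemmamna} together with the indecomposability properties of $D$ to rule out pathological configurations of the layer.
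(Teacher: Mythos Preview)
Your first two reductions match the paper's and are fine. The trouble is entirely in step three, and both of your key moves there fail.

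First, the indecomposability argument. You claim that a $G$-invariant splitting of the layer would force the defect group of the covered block of $\E(G)\Z(G)$ to decompose as a nontrivial central product, contradicting the indecomposability of $D$. But that covered block has defect group $D\cap\E(G)\Z(G)$, which can be a \emph{proper} subgroup of $D$; since $D$ is minimal nonabelian, every proper subgroup is abelian and happily decomposes as a direct product. So no contradiction arises, and you have not ruled out multiple $G$-orbits of components. Second, the ``Fong--Reynolds passage to $\N_G(L_1)$'' is not a Fong--Reynolds reduction at all: $L_1$ is not normal in $G$ when $n>1$, and there is no general Morita equivalence between $B$ and a block of $\N_G(L_1)$ preserving $k$ and $l$. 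You are missing the essential tool here.

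The paper's argument proceeds quite differently. After making the covered block $b$ of $\E(G)$ stable (genuine Fong--Reynolds), it invokes the K\"ulshammer--Puig theorem on extensions of nilpotent blocks: if $b$ is nilpotent one may replace $B$ by a block with normal defect group and the invariants are known. So one may assume $b$ is nonnilpotent, hence its defect group $d=D\cap\E(G)$ has rank at least $2$. Writing $d=\delta_1\times\cdots\times\delta_n$ along the components, if some $\delta_i$ is cyclic one shows (using Lemma~\ref{lemmamna}\eqref{lemmamna4} and the nilpotency of blocks with cyclic or Klein-four defect) that the block of the corresponding $G$-orbit of components is nilpotent, and K\"ulshammer--Puig applies again. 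Otherwise every $\delta_i$ is noncyclic, so $d$ has rank at least $2n$; but subgroups of $D$ have rank at most $3$, forcing $n=1$ directly. No passage to $\N_G(L_1)$ is needed.
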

\begin{proof}
By Proposition~\ref{O2G} we may assume $\pcore_2(G)=1$. Now we consider $\pcore(G):=\pcore_{2'}(G)$. Using Clifford theory we may assume that $\pcore(G)$ is central and cyclic (see e.\,g. Theorem X.1.2 in \cite{Feit}). Since $\pcore_2(G)=1$, we get $\pcore(G)=\Z(G)$. Let $\E(G)$ be the normal subgroup of $G$ generated by the components. As usual, $B$ covers a block $b$ of $\E(G)$. By Fong-Reynolds we can assume that $b$ is stable in $G$. Then $d:=D\cap\E(G)$ is a defect group of $b$. By the Külshammer-Puig result we may assume that $b$ is nonnilpotent. In particular $d$ has rank at least $2$. Let $C_1,\ldots,C_n$ be the components of $G$. Then $\E(G)$ is the central product of $C_1,\ldots,C_n$. Since $[C_i,C_j]=1$ for $i\ne j$, $b$ covers exactly one block $\beta_i$ of $RC_i$ for $i=1,\ldots,n$. Then $b$ is dominated by the block $\beta_1\otimes\ldots\otimes\beta_n$ of $R[C_1\times\ldots\times C_n]$. Since $\Z(C_1)$ is abelian and subnormal in $G$, it must have odd order. Hence, we may identify $b$ with $\beta_1\otimes\ldots\otimes\beta_n$ (see Proposition~1.5 in \cite{duevel}). In particular $d=\delta_1\times\ldots\times\delta_n$, where $\delta_i:=d\cap C_i$ is a defect group of $\beta_i$ for $i=1,\ldots,n$. Assume that $\delta_1$ is cyclic. Then $\beta_1$ is nilpotent and isomorphic to $(R\delta_1)^{m\times m}$ for some $m\in\mathbb{N}$ by Puig. Let $\{C_1,\ldots,C_k\}$ be the orbit of $C_1$ under the conjugation action of $G$ ($k\le n$). Then $\beta_1\otimes\ldots\otimes\beta_k\cong(R\delta_1)^{m_1\times m_1}$ (for some $m_1\in\mathbb{N}$) is a block of $R[C_1\ldots C_k]$ with $l(\beta_1\otimes\ldots\otimes\beta_k)=1$. Lemma~\ref{lemmamna}\eqref{lemmamna4} implies $k\le 2$ or $k=3$ and $|\delta_1|=2$. In the first case Theorem~2 in \cite{Sambale} shows that $\beta_1\otimes\ldots\otimes\beta_k$ is nilpotent. This also holds in the second case by \cite{Landrock2}.
Since $C_1\ldots C_k\unlhd G$, $B$ is an extension of a nilpotent block. This shows that we can assume that the groups $\delta_i$ are noncyclic for $i=1,\ldots,n$. By Lemma~\ref{lemmamna}\eqref{lemmamna4}, $d$ has rank at most $3$. Hence, $n=1$ and $\E(G)=C_1$. 

That means in order to determine the invariants of the block $B$ we may assume that $G$ contains only one component. Let $\F(G)$ (resp. $\F^\ast(G)$) be the Fitting subgroup (resp. generalized Fitting subgroup) of $G$. Since $\F(G)=\Z(G)$, we have $\C_G(\E(G))=\C_G(\F^\ast(G))\le\F(G)$. Hence, $\C_G(\E(G))$ is nilpotent. On the other hand, the quotient $G/\C_G(\E(G))$ is isomorphic to a subgroup of the automorphism group of the quasisimple group $\E(G)$. Consider the canonical map $f:\Aut(\E(G))\to\Aut(\E(G)/\Z(\E(G)))$. Let $\alpha\in\ker f$. Then $\alpha(g)g^{-1}\in\Z(\E(G))$ for all $g\in\E(G)$. Hence, we get a map $\beta:\E(G)\to\Z(\E(G))$, $g\mapsto\alpha(g)g^{-1}$. Moreover, it is easy to see that $\beta$ is a homomorphism. Since $\E(G)$ is perfect, we get $\beta=1$ and thus $\alpha=1$. This shows $\Aut(\E(G))\le\Aut(\E(G)/\Z(\E(G)))$. By Schreier's conjecture (which can be proven using the classification) $\Aut(\E(G)/\Z(\E(G)))$ is an extension of the solvable group $\Out(\E(G)/\Z(\E(G)))$ by the simple group $\Inn(\E(G)/\Z(\E(G)))\cong\E(G)/\Z(\E(G))$. Taking these facts together, we see that $G$ has only one nonabelian composition factor. In particular $G$ is an extension of a solvable group by a quasisimple group. 
\end{proof}

Now we consider blocks of maximal defect, i.\,e. $D$ is a Sylow $2$-subgroup of $G$. These include principal blocks.

\begin{Proposition}
If $B$ has maximal defect, then $G$ is solvable. In particular Alperin's weight conjecture is satisfied, and we have
\begin{align*}
k(B)&=\frac{5\cdot 2^{2(r-1)}+16}{3},\\
k_0(B)&=\frac{2^{2r}+8}{3},\\
k_1(B)&=\frac{2^{2(r-1)}+8}{3},\\
l(B)&=3.
\end{align*}
\end{Proposition}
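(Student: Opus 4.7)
The plan splits into two stages: show $G$ is solvable (the main difficulty), then compute the invariants via a Morita reduction to the principal block of $D\rtimes C_3$.

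For solvability, I would use Proposition~\ref{OGreduction} to reduce to the case $\pcore_2(G)=1$, $\Z(G)=\pcore_{2'}(G)$ is cyclic of odd order, and $\E(G)$ is either trivial or a single quasisimple group with $\Z(\E(G))\le\Z(G)$. Assuming $\E(G)\ne 1$, put $L:=\E(G)/\Z(\E(G))$ and $S:=D\cap\E(G)$; then $S$ is a Sylow $2$-subgroup of both $\E(G)$ and $L$. Since $D$ is minimal nonabelian, either $S$ is abelian or $S=D$. In the abelian case, Walter's theorem restricts $L$ to $\PSL_2(q)$ (with $q\equiv 3,5\pmod 8$ or $q=2^n$), $J_1$, or the Ree groups ${}^2G_2(q)$; each has elementary abelian Sylow $2$-subgroup sitting inside $\Omega_1(\Z(D))\cong C_2^3$, and in every case the $L$-fusion on $S$ involves automorphisms of order $\ne 3$ (e.g.\ order $7$ for $J_1$ and $\PSL_2(8)$), which is incompatible with the fusion system of $B$ described in Theorem~\ref{mnafusion}. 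In the nonabelian case $S=D$, and $D$ has sectional $2$-rank $3$, so the Gorenstein--Harada classification of simple groups of sectional $2$-rank $\le 4$ applies; direct inspection (comparing order, involution count, and $|\Z(\mathrm{Syl}_2)|$) rules out every candidate. Hence $\E(G)=1$ and $G$ is solvable.

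For the invariants, once $G$ is solvable, passing to $G/\pcore_{2'}(G)$ preserves block invariants and makes $\pcore_2$ nontrivial, so Proposition~\ref{O2G} gives $k(B)=(5\cdot 2^{2(r-1)}+16)/3$, $l(B)=3$ and $k_0(B)\ge(2^{2r}+8)/3$. For the exact values of $k_0(B)$ and $k_1(B)$, I would invoke Fong's reduction for $2$-solvable blocks: $B$ is Morita equivalent to the principal block of $D\rtimes C_3$ (with $C_3=\langle\alpha\rangle$ acting on $D$ via the order-$3$ automorphism of Lemma~\ref{mnaaut}), the cohomological twist being trivial since $\gcd(|C_3|,2)=1$ and $F^{\times}$ is divisible. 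A direct Clifford-theoretic count in $D\rtimes C_3$ then gives the invariants: by Lemma~\ref{uniquefix}, $C_D(\alpha)=\langle z\rangle$, and Brauer's permutation lemma yields exactly one $\alpha$-fixed linear character of $D$ (the trivial one) and one $\alpha$-fixed non-linear character (degree $2$, with central character the unique $\alpha$-fixed linear character of $\Z(D)$ sending $z$ to $-1$). Extending these two characters in $3$ ways each to $D\rtimes C_3$ and inducing the remaining $D$-characters in $\alpha$-orbits of size $3$ produces exactly $3+(2^{2r}-1)/3=(2^{2r}+8)/3$ irreducibles of odd degree (height $0$) and $3+(2^{2(r-1)}-1)/3=(2^{2(r-1)}+8)/3$ of degree $2\cdot(\mathrm{odd})$ (height $1$). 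Alperin's weight conjecture then follows from the equivalent form $l(B)=l(b)$ noted in the preceding discussion, since the Brauer correspondent $b\in\Bl(R\N_G(D))$ also has defect group $D$ with inertial quotient $C_3$ and hence $l(b)=3$ by the same Fong reduction.

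The main obstacle is clearly the solvability step, which genuinely relies on the classification of finite simple groups through Walter's theorem and Gorenstein--Harada; everything downstream is essentially bookkeeping once Fong's reduction becomes available.
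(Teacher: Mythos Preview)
The main gap is in the solvability step. Proposition~\ref{OGreduction} cannot be invoked here: its reductions (Fong--Reynolds, K\"ulshammer--Puig) pass to Morita equivalent blocks of \emph{different} groups in order to compute invariants; they say nothing about whether the original $G$ is solvable. Even if you bypass this and run a direct minimal-counterexample argument, your Walter case analysis is incomplete: for $\PSL_2(q)$ with $q\equiv 3,5\pmod 8$ (and for $\PSL_2(4)\cong A_5$) the Sylow $2$-subgroup is $C_2^2$ with fusion via $C_3$, so your claim that the $L$-fusion ``involves automorphisms of order $\ne 3$'' fails precisely there, and no incompatibility with $\mathcal{F}$ is produced.

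The paper's argument for solvability is far shorter and avoids classification results beyond Glauberman's $Z^*$-theorem. After passing to $G/\pcore_{2'}(G)$, note that every involution of $D$ lies in $\Z(D)$; since $D$ is Sylow, Burnside's fusion theorem forces any $G$-conjugate of $z$ lying in $D$ to be $\N_G(D)$-conjugate to $z$, and $\langle z\rangle=D'$ is characteristic in $D$, so $z$ is isolated. Hence $z\in\Z(G)$ by the $Z^*$-theorem, and then $D/\langle z\rangle\cong C_{2^r}^2$ is an abelian Sylow $2$-subgroup of $G/\langle z\rangle$, which is solvable by Brauer's Theorem~1 in \cite{BrauerApp2}.

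Your treatment of the invariants is close to the paper's in spirit (reduce to $D\rtimes C_3$ and count via Clifford theory), though the paper reaches that reduction by quoting the Alperin--McKay conjecture for solvable groups together with K\"ulshammer's crossed-product description, and obtains $k_1(B)$ by invoking \cite{Moreto} to get $k_i(B)=0$ for $i\ge 2$ rather than by counting the degree-$2$ characters directly. One small imprecision: ``passing to $G/\pcore_{2'}(G)$ preserves block invariants'' is not literally true for arbitrary blocks; what is true (and sufficient) is Fong's reduction for $p$-solvable groups, which yields a Morita equivalent block over a group with trivial $\pcore_{2'}$ and hence nontrivial $\pcore_2$.
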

\begin{proof}
By Feit-Thompson we may assume $\pcore_{2'}(G)=1$ in order to show that $G$ is solvable. We apply the $\Z^*$-theorem. For this let $g\in G$ such that $^gz\in D$. Since all involutions of $D$ are central (in $D$), we get $^gz\in\Z(D)$. By Burnside's fusion theorem there exists $h\in\N_G(D)$ such that $^hz={^gz}$. (For principal blocks this would also follow from the fact that $D$ controls fusion.) Since $D'=\langle z\rangle$, we have $^gz=z$. Now the $\Z^*$-theorem implies $z\in\Z(G)$. Then $D/\langle z\rangle\cong C_{2^r}^2$ is a Sylow $2$-subgroup of $G/\langle z\rangle$. By Theorem~1 in \cite{BrauerApp2}, $G/\langle z\rangle$ is solvable. Hence, also $G$ is solvable. Since Alperin's weight conjecture holds for solvable groups, we obtain the numbers $k(B)$ and $l(B)$. 

It is also known that the Alperin-McKay-conjecture holds for solvable groups (see \cite{Okuyama}). Thus, in order to determine $k_0(B)$ we may assume $D\unlhd G$. Then we can apply the results of \cite{Kuelshammer}. For this let $L:=D\rtimes C_3$. Then $B\cong (RL)^{n\times n}$ for some $n\in\mathbb{N}$. Hence, $k_0(B)$ is just the number of irreducible characters of $L$ with odd degree. By Clifford, every irreducible character of $L$ is an extension or an induction of a character of $D$. Thus, it suffices to count the characters of $L$ which arise from linear characters of $D$. These linear characters of $D$ are just the inflations of $\Irr(D/D')$. They spilt into the trivial character and orbits of length $3$ under the action of $L$ by Brauer's permutation lemma. The three inflations of $\Irr(L/D)$ are the extensions of the trivial character of $D$. The other linear characters of $D$ remain irreducible after induction. Characters in the same orbit amount to the same character of $L$. This shows
\[k_0(B)=3+\frac{|D/D'|-1}{3}=\frac{2^{2r}+8}{3}.\]

By Theorem~1.4 in \cite{Moreto} we have $k_i(B)=0$ for $i\ge 2$. We conclude
\[k_1(B)=k(B)-k_0(B)=\frac{5\cdot 2^{2(r-1)}+16}{3}-\frac{2^{2r}+8}{3}=\frac{2^{2(r-1)}+8}{3}.\qedhere\]
\end{proof}

The last result implies that Brauer's height zero conjecture is also satisfied for blocks of maximal defect. Moreover, the Dade-conjecture holds for solvable groups (see \cite{Dadepsolv}).

Finally we consider the case $r=2$ (i.\,e. $|D|=32$) for arbitrary groups $G$.

\begin{Proposition}
If $r=2$, we have
\[k(B)=12,\hspace{1cm}k_0(B)=8,\hspace{1cm}k_1(B)=4,\hspace{1cm}l(B)=3.\]
There are two pairs of $2$-conjugate characters of height $0$. The remaining characters are $2$-rational.
Moreover, the Cartan matrix of $B$ is equivalent to
\[\begin{pmatrix}4&2&2\\2&4&2\\2&2&12\end{pmatrix}.\]
\end{Proposition}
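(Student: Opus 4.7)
The plan is to pin down $l(B)$ (equivalently $k(B)$) first, then to read off the height distribution, the Cartan matrix, and the $2$-conjugacy structure from orthogonality of generalized decomposition numbers and Brauer's permutation lemma. The estimates already in place are $k(B)-l(B)=(5\cdot 4+7)/3=9$ by Proposition~\ref{kminusl}, $l(B)\ge 3$ by Lemma~\ref{cartanrs}, and $k(B)\le(|D|+16)/3=16$, leaving $l(B)\in\{3,\dots,7\}$. To collapse this range to $l(B)=3$ I would first invoke Proposition~\ref{OGreduction} and reduce $G$ to an extension of a solvable group by a quasisimple group with cyclic odd center $\pcore_{2'}(G)=\Z(G)$; since $|D|=32$ is small, the possibilities for the unique nonabelian composition factor can then be enumerated via the classification of finite simple groups. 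Alternatively, one exploits that $B$ is centrally controlled by $b_z$ (which has $l(b_z)=3$, from the proof of Proposition~\ref{kminusl}) to transfer the Brauer-character count through a perfect-isometry or Morita-type argument. Either route yields $l(B)=3$ and hence $k(B)=12$.

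For the height distribution, fix $(u,b_u)$ with $u\in\Z(D)\setminus\langle z\rangle$ of order $2$ and $l(b_u)=1$; then $(a^u_0,a^u_0)=|D|=32$ by the analogue of Lemma~\ref{orthogonal}, and Lemma~\ref{neukirchapp} forces $a^u_0(\chi)\equiv 2^{h(\chi)}\pmod{2^{h(\chi)+1}}$, so $a^u_0(\chi)\ne 0$ and its square is at least $1$, $4$, $16$ for heights $0$, $1$, $2$ respectively. Combining with $k_0(B)+k_1(B)+k_2(B)+k_3(B)=12$ and $k_i(B)=0$ for $i\ge 4$ (Corollary~(6D) in \cite{BrauerApp4}) yields
\[
32\ \ge\ k_0(B)+4k_1(B)+16k_2(B)+64k_3(B),\qquad k_0(B)+k_1(B)+k_2(B)+k_3(B)=12,
\]
so $k_3(B)=0$ and $k_2(B)\le 1$. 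The residual case $k_2(B)=1$ is excluded by inspecting in addition the columns $a^u_0$, $a^u_1$ for a non-central subsection of order $4$ (each of squared norm $8$, with $|d^u_{\chi\phi_u}|=1$ on height-$0$ characters by the proof of Theorem~3.1 in \cite{Robinson}). The remaining integer constraints leave only $k_0(B)=8$, $k_1(B)=4$.

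For the Cartan matrix $C$, Lemma~\ref{cartanrs} gives elementary divisors exactly $2,2,32$, so $C=2A$ with $A\in\mathbb{Z}^{3\times 3}$ symmetric positive definite of determinant $16$ and elementary divisors $1,1,16$. The reduction theory of ternary positive definite integral forms of this determinant (cf.\ \cite{Buell}) produces a short list of $\GL(3,\mathbb{Z})$-classes; the compatibility conditions imposed by the Cartan matrix $2\overline{C}$ of $b_z$ (via the contribution formula along the subsection $(z,b_z)$) together with the column data from the previous step single out $A$ equivalent to $\begin{pmatrix}2&1&1\\1&2&1\\1&1&6\end{pmatrix}$, as claimed.

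Finally, Brauer's permutation lemma (Theorem~11 in \cite{Brauerconnection}) identifies $\mathcal{G}$-orbits on $\Irr(B)$ with $\mathcal{G}$-orbits on columns of the generalized decomposition matrix. The subsections in $\Z(D)$ are all $\mathcal{G}$-fixed (order $\le 2$) and contribute $3+3+2=8$ singleton column orbits; the four non-central subsections of order $4$ are paired by $u\leftrightarrow u^{-1}$ under the Galois element $\sigma$ with $\widetilde\sigma\equiv -1\pmod 4$, yielding $2$ orbits of size $2$. Hence $\Irr(B)$ splits into $8$ $2$-rational characters and $2$ pairs of $2$-conjugate characters; a finer analysis of the non-central order-$4$ columns (using the divisibility in (5G) of \cite{BrauerBlSec2} applied to the Cartan matrix $(|M|)=(16)$ of $b_u$ together with the integrality of $d^z_{\chi\phi}$) locates the two pairs in height $0$. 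The principal difficulty is the very first step: closing the gap from $l(B)\in\{3,\dots,7\}$ to $l(B)=3$ without assuming Alperin's weight conjecture seems to require either a CFSG enumeration or a central-control transfer of Brauer-character counts from $b_z$ to $B$; the remaining three steps are essentially combinatorial consequences of orthogonality, integral quadratic form reduction, and Brauer's permutation lemma.
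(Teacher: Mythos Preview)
Your principal difficulty is also a genuine gap. Neither route you propose for collapsing $l(B)\in\{3,\dots,7\}$ to $l(B)=3$ is available: the Külshammer--Okuyama centrally-controlled result yields only an epimorphism $\Z(B)\twoheadrightarrow\Z(b_z)$, hence $k(B)\ge k(b_z)$ (which you already have), and says nothing about $l(B)$; the Morita equivalence or perfect isometry between $B$ and $b_z$ is stated in the paper only as a conjecture. A CFSG enumeration for defect groups of order $32$ is a separate project and is not what the paper does.

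The paper avoids this entirely by attacking $k_0(B)$ first rather than $l(B)$. Among the subsections in Lemma~\ref{repsystemrs}, exactly two pairs are $2$-irrational (the order-$4$ subsections $(y,b_y),(y^{-1},b_{y^{-1}})$ and $(yx^2,b_{yx^2}),(y^{-1}x^2,b_{y^{-1}x^2})$), so by Brauer's permutation lemma $\Irr(B)$ has exactly two pairs of $2$-conjugate characters. The integer column $a_1^y$ can be nonzero only on those four characters; since $(a_1^y,a_1^y)=8$, it has precisely two entries $\pm 2$, and Lemma~\ref{heightzeroodd} then forces $k_0(B)=8$. Together with $k(B)-l(B)=9$, $l(B)\ge 3$, $4\mid k_0(B)$ and inequality~\eqref{kBleD}, this leaves only $(k(B),k_1(B),l(B))\in\{(12,4,3),(14,6,5)\}$. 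The case $(14,6,5)$ is then eliminated by brute force: one writes down all admissible generalized decomposition numbers $d^u_{\chi\phi}$ for $u\ne 1$ (using contributions), computes the ordinary decomposition matrix $Q$ as the orthogonal complement of these columns, and checks that $\det(Q^{\mathrm{T}}Q)$ contradicts the elementary divisors $2,2,32$ of Lemma~\ref{cartanrs} in every case. This is the key idea you are missing: the extra case is killed by an exhaustive Cartan-determinant check, not by a structural argument on $l(B)$.

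Your subsequent steps are closer in spirit to the paper, but note that once $k(B)=12$ is known, your inequality $32\ge k_0+4k_1+16k_2+64k_3$ does not by itself pin down $(k_0,k_1)=(8,4)$ (for instance $(12,0,0,0)$ survives); the paper already has $k_0(B)=8$ from the first step, and the heights of the $2$-conjugate characters as well as the Cartan form are then read off from the same exhaustive tabulation of the $d^u_{\chi\phi}$, together with the ternary-form tables \cite{Nebeeven,Nebeodd}.
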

\begin{proof}
The proof is somewhat lengthy and consists entirely of technical calculations. For this reason we will only outline the argumenation. Since $k_0(B)$ is divisible by $4$, inequality~\eqref{kBleD} implies $k_0(B)\ge 8$. Since there are exactly two pairs of $2$-conjugate $B$-subsections, Brauer's permutation lemma implies that we also have two pairs of $2$-conjugate characters. Hence, the column $a_1^y$ contains at most four nonvanishing entries. Since $(a_1^y,a_1^y)=8$, there are just two nonvanishing entries, both are $\pm2$. Now Lemma~\ref{heightzeroodd} implies $k_0(B)=8$. This shows $(k(B),k_1(B),l(B))\in\{(12,4,3),(14,6,5)\}$.

By way of contradiction, we assume $k(B)=14$. Then one can determine the numbers $d^u_{\chi\phi}$ for $u\ne 1$ with the help of the contributions. However, there are many possibilities. The ordinary decomposition matrix $Q$ can be computed as the orthogonal space of the other columns of the generalized decomposition matrix. Finally we obtain the Cartan matrix of $B$ as $C=Q^{\text{T}}Q$. In all cases is turns out that $C$ has the wrong determinant (see Lemma~\ref{cartanrs}). This shows $k(B)=12$, $k_1(B)=4$ and $l(B)=3$.

Again we can determine the numbers $d^u_{\chi\phi}$ for $u\ne 1$. This yields the heights of the $2$-conjugate characters. We also obtain some informations about the Cartan invariants in this way. We regard the Cartan matrix $C$ as a quadratic form. Using the tables \cite{Nebeeven,Nebeodd} we conclude that $C$ has the form given in the statement of the proposition.
\end{proof}

\section*{Acknowledgment}
The author thanks his advisor Burkhard Külshammer for his encouragement. Proposition~\ref{OGreduction} was his idea.

\end{document}